\documentclass[12pt]{amsart}

\usepackage[colorlinks=true, pdfstartview=FitV, linkcolor=blue, citecolor=blue, urlcolor=blue]{hyperref}

\usepackage{amssymb,amsmath, amscd}
\usepackage{times, verbatim}
\usepackage{graphicx}
\input xy
\xyoption{all}

\usepackage{mathrsfs}
\usepackage[mathcal]{euscript}
\usepackage{dsfont}
\usepackage[T1]{fontenc}

\usepackage{enumerate}
\usepackage{epsfig}
\usepackage{MnSymbol}
\RequirePackage{color}
\usepackage{mathrsfs}
\usepackage{tikz}
\usepackage{tikz-cd}

\usetikzlibrary{matrix,arrows}

\usepackage{diagrams}

\usepackage{amsfonts}
\usepackage{amscd}
\usepackage{mathrsfs}
\usepackage{amsmath}
\usepackage{enumerate}
\usepackage{epsfig}
\RequirePackage{color}
\usepackage{soul}

\usepackage{pgf}
\usepackage{tikz}
\usepackage{tikz-cd}

\usetikzlibrary{matrix,arrows}

\newtheorem{theorem}{Theorem}[section]
\newtheorem{Theorem}{Theorem}[section]
\newtheorem{lemma}[theorem]{Lemma}
\newtheorem{proposition}[theorem]{Proposition}

\newtheorem{corollary}[theorem]{Corollary}

\newtheorem{definition}[theorem]{Definition}

\newtheorem{example}[theorem]{Example}

\def\C{\mathbb C}

\def\Z{\mathbb Z}

\newcommand{\ZZ}{\mathbf{Z}}
\newcommand{\FF}{\mathbf{F}}
\newcommand{\RR}{\mathbf{R}}

\newcommand{\QQ}{\mathbf{Q}}

\newcommand{\A}{\mathbb{A}}
\newcommand{\PP}{\mathbb{P}}
\newcommand{\CC}{\mathbf{C}}

\newcommand{\fp}{{\mathfrak{p}}}

\newcommand{\qlbar}{\bar{\QQ}_{\ell}  }

\newcommand{\mc}[1]{\mathcal{#1}}
\newcommand{\mbf}[1]{\mathbf{#1}}
\newcommand{\mfr}[1]{\mathfrak{#1}}
\newcommand{\mr}[1]{\mathrm{#1}}
\newcommand{\an}{^{\mr{an}}}

 \newcommand{\slr}{\mr{SL}_2( \RR)}

  \newcommand{\slz}{\mr{SL}_2( \ZZ)}

\newcommand{\Hom}{\mathrm{Hom}}

\setlength{\topmargin}{-.5in} \setlength{\oddsidemargin}{-.25in}
\setlength{\evensidemargin}{-.25in} \setlength{\textheight}{9in}
\setlength{\textwidth}{7.in}

\def\({\left(}
\def\){\right)}

\def\l{\lambda}
\def\eps{\varepsilon}
\def \ol{\overline}

\def\fp{\mathfrak p}


 
\newcommand*\HYPERskip{&}
\catcode`,\active
\newcommand*\pFq{
\begingroup
\catcode`\,\active
\def ,{\HYPERskip}%
\doHyper
}
\catcode`\,12
\def\doHyper#1#2#3#4#5{%
\, _{#1}F_{#2}\left[\begin{matrix}#3 \smallskip \\  #4\end{matrix} \; ; \; #5\right]%
\endgroup
}


\newcommand*\HYPER{&}
\catcode`,\active
\newcommand*\pFFq{
\begingroup
\catcode`\,\active
\def ,{\HYPER}%
\doHyperF
}
\catcode`\,12
\def\doHyperF#1#2#3#4#5{%
\, _{#1}{\mathbb F}_{#2}\left[\begin{matrix}#3 \smallskip \\  #4\end{matrix} \; ; \; #5\right]%
\endgroup
}


\catcode`,\active

\catcode`\,12


\newcommand*\HYPERpp{&}
\catcode`,\active
\newcommand*\pPPq{
\begingroup
\catcode`\,\active
\def ,{\HYPERpp}%
\doHyperFpp
}
\catcode`\,12
\def\doHyperFpp#1#2#3#4#5{%
\, _{#1}{\mathbb P}_{#2}\left[\begin{matrix}#3 \smallskip \\  #4\end{matrix} \; ; \; #5\right]%
\endgroup
}


\catcode`,\active

\catcode`\,12


\newcommand*\ApellFF{&}
\catcode`,\active
\newcommand*\FAFn{
\begingroup
\catcode`\,\active
\def ,{\ApellFF}%
\doApellFFp
}
\catcode`\,12
\def\doApellFFp#1#2#3#4#5{%
\, {\mathbb F}^{(#1)}_D\left[\begin{matrix}#2;&#3 \smallskip \\  {} & #4 \end{matrix} \; ; \;#5\right]%
\endgroup
}


\catcode`,\active

\catcode`\,12


\catcode`,\active

\catcode`\,12

\newcommand{\DM}{\mathbf{DA}}

\newcommand{\un}{\mathds{1}}

\newcommand{\Spec}{{\rm Spec}}

\begin{document}
\title{Transformations of hypergeometric motives}

\author{Jerome William Hoffman}

\address{Department of Mathematics \\
              Louisiana State University \\
              Baton Rouge, Louisiana 70803}

\author{Fang-Ting Tu}

\address{Department of Mathematics \\
              Louisiana State University \\
              Baton Rouge, Louisiana 70803}

\email{hoffman@math.lsu.edu}

\email{tu@math.lsu.edu}

\subjclass{11T23, 11T24, 11S40, 11F80, 11F85, 33C05, 33C65}

 \keywords{hypergeometric functions, motive, $\ell$-adic representations, rigidity}

\begin{abstract}
We consider algebraic transformations of hypergeometric functions from a geometric point of view.
Hypergeometric functions are shown to arise from the deRham realization of a hypergeometric motive. 
The $\ell$-adic realization of the motive gives rise to hypergeometric characters sums over finite fields. 
This helps to unify  and explain some recent results about transformations of hypergeometric 
character sums.

\end{abstract}

\maketitle

\section{Introduction.}
\label{S:intro}
The transformation theory of hypergeometric functions goes back at least to Gauss and Kummer.
A transformation  of hypergeometric functions is an identity

$$\pFq{2} {1}{a, b} {,c }{R(z)}  = C(z)\pFq{2} {1}{\alpha, \beta} {,\gamma} {S(z)}$$  
 for  rational 
functions $R(z), S(z)$ and an algebraic function $C(z)$. Kummer discovered many such identities by difficult calculations. Later 
Riemann deduced some of Kummer's identities by proving that 
the hypergeometric differential equation is the unique second order analytic differential equation
with three regular singularities at $z = 0, 1, \infty$ and with prescribed local monodromy  about these points. Following Katz, we say that the hypergeometric 
differential equation is {\it rigid}.

More recently, character sums over finite fields have been introduced which are analogs 
of hypergeometric functions, and many transformation identities have been discovered for them, 
analogous to the formulas known for the complex-analytic hypergeometric functions. The point of this 
paper is to explain that this is not an accident; both of these hypergeometric functions are manifestations (or realizations) of a 
{\it hypergeometric motive}. The complex-analytic
function reflects the Hodge-deRham realization, whereas the finite-field hypergeometric function 
reflects the $\ell$-adic \'etale realization. There are also $p$-adic crystalline realizations.  

The theme of this paper is that the transformations of hypergeometric functions are really geometric in origin-they
are transformations of motives. Thus, one gets transformation formulas in all three realizations. However, 
the assertion that all transformation identities of hypergeometrics are geometric in origin is really an imprecise conjecture. 
In certain cases 
it follows readily from standard conjectures (Hodge, Tate) about algebraic cycles. 
For GKZ systems, see \cite{Ter1}.
Nonetheless, we prove a weaker form of a consequence of this 
in theorem \ref{T:compare}. This allows one to deduce from a complex-analytic transformation
formula, a corresponding identity for finite-field hypergeometrics, up to an unspecified Galois 
twist.

In stating that all transformations arise from geometry, it should be understood that 
these geometric correspondences are very diverse. All $\phantom{}_2F_1$ hypergeometric motives 
occur in the cohomology of the families of curves 
\[
y^N = x^i (1-x)^j(1-\lambda x)^k
\]
for nonnegative integers $N, i, j, k$. In principle, any transformation can be expressed 
as a correspondence among various of these curves, but this is often unnatural. Hypergeometric 
motives occur in the cohomology of other families of algebraic varieties. 
Example:
recently, Yifan Yang and the second author of this paper
have discovered some transformations of hypergeometric equations using 
the theory of Shimura curves, \cite{TY1}. The motives are then families of abelian varieties 
with quaternion multiplication. These will be explored in more detail in a subsequent 
publication.

The word {\it hypergeometric} is used in a general sense in this paper. See section 
\ref{S:hypsheaf}. We do not consider {\it confluent} hypergeometrics. 
Finite field analogs of these are given e.g., by Kloosterman sums. Their $\ell$-adic sheaves
have wild ramification.
The Hodge-deRham story involves irregular Hodge theory, which has undergone 
a rapid development recently, see \cite{ESY}, \cite{FSY}, \cite{Yu}. In this paper, only tame ramification and regular singularities are permitted.

 
 A word on our use of the term {\it motivic}. Generally speaking, one expects to have a category 
 of motivic sheaves with the formalism of the 6 functors and realizations into various cohomology theories.
  We will explain in section 
 \ref{S:app3} the formalism we use. 
 Also, the term {\it hypergeometric motive} has already appeared and there is even a package 
 in Magma for computing with these. Those hypergeometric motives are special cases of the ones
 considered here. 
 
 An outline of this paper: In section \ref{S:DEmono} we recall the  relations between regular singular
 differential equations and monodromy. Section  \ref{S:hypsheaf} is a general discussion of sheaves attached to hypergeometric 
 functions. In section \ref{S:classical} this is specialized to the classical $_{2}F_1$ function. Section \ref{S:cyclo} 
 analyzes the cohomology of a family of curves relevant to this paper. In section \ref{S:hypmot} we give 
 the definition of hypergeometric motives used in this paper. In \ref{S:compare} we prove a theorem 
 that allows one to deduce a transformation formula for $\ell$-adic sheaves, knowing one for the 
 corresponding $\mc{D}$-module. Section \ref{S:charshv} reviews the formalism on $\ell$-adic sheaves in application 
 to character sums. Our main theorem \ref{T:main} is proved there. Section \ref{S:rigid} explains Katz's theory of rigid local systems. 
 This is specialized to Appell-Lauricella systems in \ref{S:AL}.
In sections \ref{S:exrig}, \ref{S:ATG}, \ref{S:exelliptic} examples are given of transformation formulas related respectively 
to rigidity, arithmetic triangle groups, and elliptic curves. In section \ref{S:picard} we discuss a transformation formula 
for an Appell-Lauricella system arising from the Picard family of curves. Appendices \ref{S:app1}, \ref{S:app3} explain the 
formalism of local systems: over $\CC$, $\ell$-adic, and motivic. 

We would like to thank Donu Arapura and Joseph Ayoub for information regarding the theory of motives. 

\section{Differential equations and monodromy}
\label{S:DEmono}
Riemann introduced 
the idea of monodromy into the study of analytic differential equations. 
Given  a representation of the fundamental group
\[
\rho : \pi _1 (\PP^1 (\CC) - \{ 0, 1, \infty\}, x) \to \mr{GL}_2(\CC)
\]
there is a unique second order rational differential equation with regular singular points 
 $z = 0, 1, \infty$ with the property that if $f_1(z), f_2(z)$ is a basis of holomorphic 
 solutions at $x$ then analytic continuation around a loop $\gamma \in \pi _1 (\PP^1 (\CC) - \{ 0, 1, \infty\}, x)$
 yields the linear transformation
 \[
  \begin{pmatrix} f_1\\ f_2 \end{pmatrix} \to \rho (\gamma) \begin{pmatrix} f_1\\ f_2 \end{pmatrix}.
 \]
Since 
\[
 \pi _1 (\PP^1 (\CC) - \{ 0, 1, \infty\}, x) = \langle \gamma _1, \gamma _2, \gamma _3\mid
 \gamma _1 \gamma _2 \gamma _3  = 1\rangle , 
\]
to give the monodromy representation is equivalent to giving two-by-two matrices
$\rho (\gamma _1) , \rho (\gamma _2) , \rho (\gamma _3)  $ such that 
$\rho (\gamma _1) \rho (\gamma _2) \rho (\gamma _3) = 1 $. These are well-defined up to 
simultaneous conjugation by an element of  $\mr{GL}_2(\CC)$. As Katz observes, Riemann proved 
a stronger result. Namely, it suffices to give the Jordan canonical forms of 
$\rho (\gamma _1) , \rho (\gamma _2) , \rho (\gamma _3)  $ to reconstruct the hypergeometric
differential equation. Actually Riemann only considered the case when these were semisimple, so 
equivalent to diagonal matrices, with eigenvalues $\exp (2\pi i \alpha )$ and  $\exp (2\pi i \alpha ' )$; 
he called the  $\alpha, \alpha '$ the exponents at the singular point. They are well-defined up to permutation and
adding $\ZZ$. This stronger property, that a differential equation is determined by the Jordan forms
of the monodromies at the singular points, is what Katz (\cite{Katz96}) calls {\it rigidity}. 

If $X$ is a nonsingular algebraic variety over $\CC$ and 
\[
\rho : \pi _1 (X^{\mr{an}},  x) \to \mr{GL}_n(\CC)
\]
is a representation, we get a local system {\sf  V} on the analytic space $X ^{\mr{an}}$. By the Riemann-Hilbert
correspondence, this is the solution sheaf to a differential equation, unique up to isomorphism,
\[
\nabla : \mc{V} \to \Omega^ 1 _{X/\C} \otimes _{\mc{O}_X} \mc{V}
\]
with regular singular points at infinity (see \cite{DelDE}).  A fundamental result asserts that the differential equations for the
periods of algebraic varieties have regular singular points with quasi-unipotent local monodromy.  This is due to 
Griffiths, Landman and Grothendieck. Arithmetic proofs of these results are given in \cite{Katz70i}. See also \cite{griff70}.

\section{Hypergeometric Sheaves}
 \label{S:hypsheaf}

The word {\it hypergeometric} will be understood in a generalized sense:
they include the $_p F_q$, the Pochhammer equations, Appell-Lauricella 
equations. The most general form of these are the GKZ (Gelfand-Kapranov-Zelevinski)
systems, \cite{GKZ}.

Generally speaking, a hypergeometric function is one that 

 \begin{itemize}
 \item[1.] has power-series expansions in special form: $\Gamma$-series;
 \item[2.] satisfies a (regular) holonomic system of differential equations;
  \item[3.] has Euler integral expressions;
  \item[4.] is attached to a motivic sheaf.
  \end{itemize}

Because of 4 above, we expect {\it realizations} of hypergeometric systems.
Let the motivic sheaf $\mathcal{H}$ be defined on $X/S$ where $X$ is a smooth $S$-scheme
with $S = O_F [1/N]$, $F$ = an algebraic number field, $O_F$ its ring of integers, 
$N \ge 1$ an integer. A typical case is 
\[
X = \PP^1 - \{  \text{a\ finite\ number\  of\  points}\}\ \ 
\mr{or} \ \  X = \PP^N - \{  \text{a\ finite\ number\  of\  hyperplanes}\}.
\] 
We expect 
\begin{itemize}
\item[\bf{Betti}.]
A Betti realization: $\mathcal{H} _{\sigma, \ZZ}$ on $X^{\text{an}}$, a local system of constructible
$\ZZ$-modules on the analytic space  $X^{\text{an}} = X^{\text{an}} _{\sigma}$, attached to each embedding 
$\sigma:  R \to \CC$.

\item[\bf{HdR}.] 
A Hodge-deRham realization: $\mathcal{H}_{dR}$ on $X$ which is a locally free sheaf 
in the Zariski topology
with an integrable connection 
\[
\nabla : \mathcal{H}_{dR} \to \mathcal{H}_{dR}\otimes _{\mc{O}_X} \Omega ^1 _{X /S}.
\]
For each embedding $\sigma : R \to \CC$, 
\[
\mathcal{H} _{\sigma, \CC} := \mathcal{H} _{\sigma, \ZZ} \otimes _{\ZZ} \CC= \mr{Ker} (\nabla _{\sigma}^{\text{an}}),
\]
the sheaf of analytic solutions to the algebraic differential 
equation $\nabla$. There is a comparison isomorphism
\[
\text{comp} _{\sigma}: \mathcal{H} _{\sigma, \ZZ} \otimes _{\ZZ}\CC \cong  \mathcal{H} ^{\text{an}}_{\sigma, dR} :=
 \mathcal{H}_{dR}\otimes _{\mc{O}_X }\mc{O} _{X _{\sigma}^{\text{an}}}.
\]
Written in a local flat frame for $ \mathcal{H} _{\sigma, \ZZ} $, the above isomorphism is given by a matrix
whose entries are analytic functions. These are the hypergeometric functions. They can be 
expressed as Euler integrals, and are  {\it periods} of these motives. Typically this structure extends to  a variation of Hodge 
structures, or are projections of these onto character eigenspaces. 

\item[\bf{$\ell$-adic}.]
For each good prime $\ell$, an $\ell$-adic realization: this is a lisse $\qlbar$-sheaf $\mathcal{H}_{\ell}$ on $X_{et}[1/\ell]$
whose Frobenius traces give a function
\[
X[1/\ell] (\FF _{q^n})\ni x \mapsto \mr{Tr} (\mr{Frob}_x \mid \mathcal{H}_{\ell, \bar{x}}) \in \qlbar.
\]
These functions are finite-field analogs of hypergeometric functions. This theory was developed 
principally by Katz; see his works \cite{Katz88}, \cite{Katz90}, \cite{Katz96}, \cite{Katz09}. For the 
$\ell$-adic version of GKZ see \cite{LF}.

\item[\bf{Crys}.] Finally there are 
$p$-adic crystalline realizations, and relations to $p$-adic Hodge theory. This 
is not as developed as the previous three. Recently, a Frobenius structure
has been established for GKZ systems, see forthcoming works of Fu/Wan/Zhang, \cite{FWZ}.
\end{itemize}

 The above are related by a series of compatibilities, which will not be written. See
 \cite{Huber95}.
 
Note that there are irregular differential equations of hypergeometric type, the confluent
hypergeometrics. The Hodge-deRham realizations then belong to irregular Hodge theory. 
They are related to character  
 sums involving additive as well as multiplicative characters of finite fields, and hence 
 their $\ell$-adic sheaves have wild ramification at infinity. 
In this note we will simplify the discussion by only considering tamely ramified sheaves, 
and only from the $\mathcal{D}$-module and $\ell$-adic point of view.

We will use the word {\it transformation} as follows. 
 This is an identity
 \[
 f^* \mc{M}_1 =  g^* \mc{M}_2 \otimes \mc{K}
  \]
  where $ \mc{M}_1$ is a hypergeometric sheaf on $X_1$,   
  $ \mc{M}_2$ is a hypergeometric sheaf on $X_2$, and  $\mc{K}$
  is a sheaf on $X$, in a diagram  
  \[ 
\begin{diagram}
&& X   & & \\
& f\ \ \ \  \ldTo\ & & \rdTo\ \ \ \ \ \  g\\
X_1 & & & & X_2 
\end{diagram}
\]

\section{The classical hypergeometric equation}
\label{S:classical}

See \cite[6.0, 6.1, 6.8.0]{Katz72}. 
For any scheme $T$, we denote by $\lambda$ the coordinate on $\mathbb{A}^1 _T$ and 
by $S_T$ the open set where $\lambda (1-\lambda)$ is invertible. Given any sections
$a, b, c, \in \Gamma (T, \mc{O}_T)$ we denote by $E(a, b, c)$ the free $\mc{O}_{S_T}$-module of rank
2 with basis $e_0$, $e_1$,  and integrable $T$-connection

\begin{align*}
\nabla \left (\frac{d}{d\lambda}\right )(e_0) = &e_1\\
\nabla \left (\frac{d}{d\lambda}\right )(e_1) = 
&-\frac{(c-(a+b+1)\lambda)}{\lambda (1-\lambda)} e_1 +\frac{ab}{\lambda (1-\lambda)} e_0.
\end{align*}
Horizontal sections of the dual of $E(a, b, c)$ over an open set $U \subset S_T$ can be identified with 
sections $f \in \Gamma (U, \mc{O}_U)$ which satisfy the differential equation
\[
{\lambda (1-\lambda)} \left (\frac{d}{d \lambda} \right )^2 f 
+ (c -(a+b+1)\lambda)\frac{df}{d \lambda} -ab f = 0.
\]
These hypergeometric equations are two-dimensional factors of the the cohomology of the
family of curves $y^N = x^a (x-1)^b(x-\lambda)^c$. In effect, the Euler integral representation

\[
F(\alpha, \beta; \gamma; \lambda)= \frac{\Gamma (\gamma)}{\Gamma (\beta)\Gamma(\beta - \gamma)}
\int _1 ^{\infty} x ^{\alpha - \gamma}(x-1)^{\gamma-\beta -1}(x-\lambda)^{-\alpha}dx
\]
shows that the solutions to the differential equation are given by periods of those curves.  

Given integers $N, a, b, c$ greater that zero. Let $Y(N;a, b, c)_{\lambda}$ be the nonsingular projective model of the 
affine curve in $(x, y)$-space defined by the equation  $y^N = x^a (x-1)^b(x-\lambda)^c$. 
This is a family of curves depending on the parameter $\lambda \neq 0, 1$. We consider 
the  family of curves 
 $$f: Y(N; a, b, c)\to  U := \mathbb{P}^1 -\{ 0, 1, \infty \}.$$
 We have the Gauss-Manin connection 
\[
\nabla: H^1 _{DR}(Y(N;a, b, c)/U)\to \Omega ^1 _{U/\CC}\otimes  _{\mc{O}_U}H^1 _{DR}(Y(N;a, b, c)/U).
\]
The following theorem gives the structure of this, at least in the generic fiber
$\mr{Spec}(\C (\lambda)) \hookrightarrow U$. Let
\[
X(N; a, b, c) = \mr{Spec} \ \CC (\lambda) [x, y, 1/y]/(y^N - x^a (x-1)^b(x-\lambda)^c).
\]
This is an open affine subset where $y$ is invertible. It is affine and smooth of relative 
dimension one over $\CC(\lambda)$. The map $(x, y)\to y$ is a finite 
\'etale covering of 
$$ \mathbb{A}^1_{\CC(\lambda)}  - \{0, 1, \lambda \}:= \mr{Spec}\  \CC (\lambda)[x, (x (x-1)(x-\lambda))^{-1}].$$
For any root of unity $\xi \in \mu _N$ there is an automorphism of $X(N; a, b, c) $ given by 
$(x, y) \mapsto (x, \xi y)$. This gives the Galois group of the covering 
$$\pi : X(N; a, b, c) \to\mathbb{A}^1_{\CC(\lambda)}  - \{0, 1, \lambda \}. $$ Note that 
the $dx/y^m$ defines an element in the character eigenspace
$ : H^1 _{DR}(X(N;a, b, c)/\CC(\lambda)) ^ {\chi (m)}$
where $\chi (t) (\xi) = \xi ^{-t}$ (the inverse of Katz's convention).

\begin{proposition}
(\cite[6.8.6]{Katz72}) Suppose that $N$ does not divide $a, b, c, a+b+c$. Then for any
integer $k\ge 1$ which is invertible modulo $N$ the map
\begin{align*}
e_0 &\mapsto \mr{the\  class\  of\ \ } \frac{dx}{y^k}\\
e_1 &\mapsto\nabla \left (  \frac{d}{d\lambda} \right ) ( \mr{the\  class\  of\ \ }\frac{dx}{y^k})
\end{align*}
induces an isomorphism
\[
E\left( \frac{kc}{N}, \frac{k(a+b+c)}{N}-1, \frac{k(a+c)}{N}   \right ) \cong
H^1 _{DR}(X(N;a, b, c)/\CC(\lambda)) ^ {\chi (k)}.
\]
\end{proposition}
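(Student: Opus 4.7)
The plan is to identify the $\chi(k)$-isotypic component of $H^1_{DR}(X(N;a,b,c)/\CC(\lambda))$ with the de Rham cohomology of a rank-one flat bundle on $U := \A^1_{\CC(\lambda)} - \{0,1,\lambda\}$ via the Kummer cover, establish that this cohomology is two-dimensional, and verify by direct calculation that $(e_0, e_1) := ([dx/y^k], \nabla(d/d\lambda)[dx/y^k])$ satisfies exactly the relations defining $E(kc/N, k(a+b+c)/N-1, k(a+c)/N)$. The finite \'etale Kummer cover $\pi \colon X(N;a,b,c) \to U$ decomposes $\pi_* \mc{O}_X$ under the $\mu_N$-action, and the $\chi(k)$-eigenspace is the rank-one $\mc{O}_U$-module trivialized by $y^{-k}$. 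In this trivialization the relative de Rham differential becomes $d - k\omega$, where
\[
\omega = d\log y = \frac{1}{N}\left(\frac{a}{x} + \frac{b}{x-1} + \frac{c}{x-\lambda}\right) dx
\]
is obtained by logarithmically differentiating the defining equation. This identifies $H^1_{DR}(X(N;a,b,c)/\CC(\lambda))^{\chi(k)}$ with $H^1_{DR}(U, \mc{L}_{\chi(k)})$ for $\mc{L}_{\chi(k)} := (\mc{O}_U, d - k\omega)$, whose residues at $0, 1, \lambda, \infty$ are $-ka/N, -kb/N, -kc/N, k(a+b+c)/N$. The hypotheses that $\gcd(k,N) = 1$ and $N$ divides none of $a, b, c, a+b+c$ ensure that none of these residues is an integer, so the tame Euler--Poincar\'e formula gives $\dim_{\CC(\lambda)} H^1_{DR}(U, \mc{L}_{\chi(k)}) = 2$.

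Differentiating the defining equation yields $\partial y/\partial\lambda = -cy/(N(x-\lambda))$, hence $e_1 = A\,[dx/(y^k(x-\lambda))]$ with $A := kc/N$, corresponding to $A\,[dx/(x-\lambda)]$ in the flat-bundle trivialization. Applying $d_{\mr{flat}} := d - k\omega$ to the functions $1$ and $x$ produces the identities
\[
a\,[dx/x] + b\,[dx/(x-1)] + c\,[dx/(x-\lambda)] = 0,
\]
\[
\bigl(N - k(a+b+c)\bigr)[dx] = kb\,[dx/(x-1)] + kc\lambda\,[dx/(x-\lambda)]
\]
in $H^1_{DR}(U, \mc{L}_{\chi(k)})$. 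Since $N - k(a+b+c) \neq 0$ by hypothesis, these relations make $\{[dx/(x-1)], [dx/(x-\lambda)]\}$ a basis and show that $e_0, e_1$ are linearly independent.

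To check the second-order relation I would compute
\[
\nabla(d/d\lambda)\,e_1 = A(1+A)\,[dx/(y^k(x-\lambda)^2)],
\]
whose image in the flat trivialization is $A(1+A)\,[dx/(x-\lambda)^2]$, and reduce $[dx/(x-\lambda)^2]$ to a combination of $e_0, e_1$ by applying $d_{\mr{flat}}$ to $1/(x-\lambda)$ and expanding via the partial-fraction identities $1/(x(x-\lambda)) = \lambda^{-1}(1/(x-\lambda) - 1/x)$ and $1/((x-1)(x-\lambda)) = (\lambda-1)^{-1}(1/(x-\lambda) - 1/(x-1))$. The main obstacle is the bookkeeping in this reduction, which must produce precisely the coefficients $AB/(\lambda(1-\lambda))$ of $e_0$ and $-(C - (A+B+1)\lambda)/(\lambda(1-\lambda))$ of $e_1$, with $B = k(a+b+c)/N - 1$ and $C = k(a+c)/N$. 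The key simplifications are the identity $A + B + 1 = k(a+b+2c)/N$ and the telescoping $b/\lambda - b/(\lambda-1) = -b/(\lambda(\lambda-1))$. Once this identity is verified, the prescribed assignment $e_0 \mapsto [dx/y^k]$, $e_1 \mapsto \nabla(d/d\lambda)[dx/y^k]$ extends to a morphism from $E(A,B,C)$ to $H^1_{DR}(X(N;a,b,c)/\CC(\lambda))^{\chi(k)}$ of $\mc{O}_U$-modules compatible with connection, and it is an isomorphism by the dimension count of the first paragraph together with the independence established in the second.
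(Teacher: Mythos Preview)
The paper does not give its own proof of this proposition: it is quoted verbatim from Katz \cite[6.8.6]{Katz72} and no argument is supplied. So there is nothing in the paper to compare your proposal against.

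That said, your outline is sound and is essentially Katz's own computation. The identification of the $\chi(k)$-eigenspace of $H^1_{DR}(X)$ with $H^1$ of the rank-one connection $(\mc{O}_U, d-k\omega)$ on $U$ via the finite \'etale $\mu_N$-cover is exactly right; the residue check and Euler--Poincar\'e argument correctly give $\dim = 2$; and your formulas for $\partial y/\partial\lambda$, $e_1$, and $\nabla(d/d\lambda)e_1$ are all correct. The reduction of $[dx/(x-\lambda)^2]$ via $d_{\mr{flat}}(1/(x-\lambda))$ together with the two relations you wrote down from $d_{\mr{flat}}(1)$ and $d_{\mr{flat}}(x)$ is precisely the mechanism that produces the hypergeometric coefficients. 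One small point worth tightening: to conclude that $\{[dx/(x-1)],[dx/(x-\lambda)]\}$ is a basis (and hence that $e_0,e_1$ are independent) you should first note that the standard reduction using $d_{\mr{flat}}(x^n)$ and $d_{\mr{flat}}((x-\mu)^{-n})$ shows every class is a $\CC(\lambda)$-combination of $[dx/x],[dx/(x-1)],[dx/(x-\lambda)]$; then your relation from $d_{\mr{flat}}(1)$ eliminates one of the three, and the dimension count finishes it. The only part you leave genuinely unfinished is the explicit coefficient match, which you honestly flag as bookkeeping; it is straightforward but does require care, and if this were to stand as a full proof you would need to carry it out.
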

Note that this gives only the part of the cohomology belonging to primitive characters
modulo $N$. 
  
The local system $R^1 f_* \CC$ on $U^{an} $ underlies a polarized variation of Hodge structures
of weight 1. The $H^1 _{DR}(X(N;a, b, c)/\CC(\lambda)) ^ {\chi (k)}$ are the modules with connection 
that correspond to the rank 2 local system  $(R^1 f_* \CC) ^{\chi (k)}$. Note that 
 $(R^1 f_* \CC) ^{\chi (k)}$ does not correspond to a variation of Hodge structures unless 
 the character $\chi(k)$ is real. Nonetheless there are period mappings attached to this situation
 (see \cite{DK}).

To consider the $\ell$-adic realization, let $S = \mr{Spec}(R_N)$, where
$R_N = \ZZ[\zeta _N, 1/N]$. The eigenspaces  $(R^1 f_* \qlbar) ^{\chi (k)}$
then give the $\ell$-adic realization, where $f: Y(N;a, b, c) \to U$ is as before 
but now as schemes over $S$. The \'etale topology is understood here.

\section{Cohomology of cycloelliptic curves}
\label{S:cyclo}
\subsection{}
\label{SS:cyclo1}
A cycloelliptic curve is the projective nonsingular model of 
\[
 y^N = x^i (1-x)^j(1-\lambda _1 x)^{k_1}...(1-\lambda _r x)^{k_r}.
\]
A more symmetric numbering is to take 
\[
X_{\mathbf{\lambda}}^{[N; \mathbf{i}]}= X: y^N = \prod _{j=0}^{r+1}(x-\lambda _j) ^{i_j} , \quad \mbf{i}  = (i_0, ..., i_{r+1}) .
\]
At first, we examine this over $\CC$, with fixed  values of the parameters $\lambda_1, ..., \lambda _r$, and we use $X$ to denote 
the corresponding Riemann surface. 
The natural projection $p : X \to P$ sending $(x, y)\mapsto x$ makes 
$X$ into a Galois $\mu _N$-branched covering of $P = \mathbb{P}^1_{x}$. We define the action as 
$y \mapsto \zeta _N y$, $\zeta _N = \exp(2 \pi i/N)$. The branching occurs over a subset of
 \[
 S = \{\lambda_{0}, ..., \lambda _{r+1}, \infty \} \supset S_0 =  \{\lambda_{0}, ..., \lambda _{r+1} \}.
 \]
In our set-up, the branching will be over all of $S$. We let
$T = p^{-1}(S) \subset X$, which is the subset of $X$ where $y \neq 0, \infty$. We let $X^{\circ}= X - T$, $P^{\circ}= P - S$. These are affine smooth curves
and the projection $p ^{\circ} :X^{\circ}\to P^{\circ}$ is an \'etale $\mu _N$-covering.
We have a Cartesian square
\[
\begin{CD}
X ^{\circ}@>j>> X\\
@Vp^{\circ}VV @VpVV\\
P ^{\circ}@>j'>> P\\
\end{CD}
\]

The cohomology decomposes
\[
H^1(X, \CC) = \bigoplus _{\chi: \mu _N \to \CC^{\times}}      H^1(X, \CC)  ^{\chi}
\]
where the sum is over the characters $\chi$ and the superscript refers to the $\chi$-eigenspace. One can replace
the coefficients $\CC$ by a smaller field, e.g, $K_N = \QQ(\mu _N)$. The sheaf sequence 
\[
\begin{CD}
0@>>> j_{!} \CC_{X^{\circ}} @>>> \CC_X @>>> \CC_T @>>> 0
\end{CD}
\]
gives
\[
\begin{CD}
...@>>>H^i_{c} (X^{\circ}, \CC) @>>>H^i (X, \CC)  @>>>H^i (T, \CC)  @>>> ...
\end{CD}
\]
which shows that 
\[
H^1_{c} (X^{\circ}, \CC) = H^1 (X, \CC) \oplus \CC^{\# T -1}
\]
where the first summand is pure of weight 1, and the second factor is pure of weight 0 (of 
Hodge type $(0, 0)$). This decomposes into eigenpaces for $\chi \in\widehat {\mu _N}$. 

 Projecting the above sheaf sequence by $p$ we get
 \[
 \begin{CD}
 0@>>>p_{\ast} j_{!} \CC_{X^{\circ}} @>>>p_{\ast} \CC_X @>>> p_{\ast}\CC_T @>>> 0\\
 && @V=VV @V=VV @V=VV\\
 0 @>>> \displaystyle{\bigoplus _{\chi \in \widehat {\mu _N}} j' _{!}L_{\chi}}@>>>  
 \displaystyle{\bigoplus _{\chi \in \widehat {\mu _N}}\tilde{L}_{\chi}}@>>> 
 \displaystyle{\bigoplus _{\chi \in \widehat {\mu _N}}} (p_{\ast}\CC_T)^{\chi} @>>>0
 \end{CD}
 \]
For each character $\chi$, $L_{\chi}$ is a rank 1 $\CC$-local system on $P ^{\circ}$, 
$\tilde{L}_{\chi}$ is a constructible sheaf of $\CC$-vector spaces on $P$, and 
$j': P^{\circ}\to P$ is the inclusion. 
We have
\[
(p_{\ast}\CC_T)^{\chi}  = \bigoplus _{s\in S} (p_{\ast}\CC_T) _s^{\chi}. 
\]
By Leray, we get
\[
H^1(X, \CC) ^{\chi} = H^1(P, \tilde{L}_{\chi}), \quad  H_c^1(X^{\circ}, \CC) ^{\chi} = H_c^1(P^{\circ}, L_{\chi}). 
\]
By choosing a root of unity $\zeta _N = \exp (2 \pi i/N)$ we can identify $\widehat {\mu _N}= \ZZ/N$.
Then the local system $L_{\chi}$ belonging to the character $\chi_k (\zeta _N) = \zeta _N^k$ is the 
subsheaf
\[
L_{\chi} = \CC y^k \subset \mathcal{O} ^{\mr{hol}} _{P ^{\circ}}
\]
where $y$ is any branch of $\sqrt[N]{ \prod _{j=0}^{r+1}(x-\lambda _j) ^{i_j} }$.

\begin{theorem}
\label{T:cohcyclo}
Assume that for each $j$, $i_j \nequiv 0 $ mod $N$ and that $i_0 + ...+i_{r+1}\nequiv 0$ mod $N$. Then 
for each primitive character $\chi \in \widehat {\mu _N} ^{\mr{prim}}$,
\[
H_c^1(P^{\circ}, L_{\chi}) = H_c^1(X^{\circ}, \CC) ^{\chi} = H^1(X, \CC) ^{\chi}=H^1(P, \tilde{L}_{\chi}).
\]
The above space has dimension $r+1$.
\end{theorem}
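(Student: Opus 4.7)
The plan is to break the proof into two parts: first the string of four isomorphisms, then the dimension count.

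For the isomorphisms, two of them are already obtained from Leray applied to the finite map $p : X \to P$ — namely $H^1(X,\CC)^{\chi} = H^1(P, \tilde L_{\chi})$ and $H^1_c(X^{\circ}, \CC)^{\chi} = H^1_c(P^{\circ}, L_{\chi})$ — since $p$ is finite so $Rp_* = p_*$, the $\chi$-components are already identified before the statement of the theorem. Hence the only essential identification to prove is $H^1_c(X^{\circ}, \CC)^{\chi} = H^1(X,\CC)^{\chi}$ for primitive $\chi$. I would read this off the long exact sequence
\[
H^0(X,\CC)^{\chi} \to H^0(T,\CC)^{\chi} \to H^1_c(X^{\circ}, \CC)^{\chi} \to H^1(X,\CC)^{\chi} \to H^1(T,\CC)^{\chi}
\]
obtained by taking $\chi$-isotypic components of the exact sequence already displayed. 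Since $T$ is zero-dimensional, $H^1(T,\CC)=0$; and $H^0(X,\CC)^{\chi}=0$ for any nontrivial $\chi$. So the statement reduces to showing $H^0(T,\CC)^{\chi}=0$ for each primitive $\chi$.

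The key step is thus the analysis of $H^0(T,\CC)^{\chi}=\bigoplus_{s\in S} H^0(p^{-1}(s),\CC)^{\chi}$. Over each $s\in S$, the group $\mu_N$ acts transitively on the finite fiber $p^{-1}(s)$; the orbit has cardinality $\gcd(N,i_s)$ (with the convention $i_{\infty}=-\sum_{j=0}^{r+1} i_j$), and the stabilizer is the unique cyclic subgroup $H_s\subset \mu_N$ of order $N/\gcd(N,i_s)$. By Frobenius reciprocity,
\[
H^0(p^{-1}(s),\CC)^{\chi} \;=\; \mathrm{Ind}_{H_s}^{\mu_N}(\CC)^{\chi}
\]
is nonzero iff $\chi|_{H_s}$ is trivial, i.e.\ iff $N\mid k\cdot\gcd(N,i_s)$ for $\chi=\chi_k$. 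Writing $\chi=\chi_k$ with $\gcd(k,N)=1$ (primitivity), this forces $\gcd(N,i_s)=N$, i.e.\ $i_s\equiv 0 \pmod N$. The hypotheses exclude this at every $s\in S$ (including $s=\infty$, thanks to the separate assumption $i_0+\cdots+i_{r+1}\nequiv 0 \pmod N$), so the $\chi$-component is zero at each $s$. This is the step I expect to be the real bookkeeping obstacle: one has to treat $\infty$ on the same footing as the finite branch points and keep the ramification invariants straight.

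For the dimension, I would compute $\dim H^1_c(P^{\circ},L_{\chi})$ via the Euler characteristic. Since $L_{\chi}$ is a rank one local system on the open curve $P^{\circ}=\PP^1-S$,
\[
\chi(P^{\circ},L_{\chi}) \;=\; \chi(P^{\circ})\cdot\mathrm{rank}(L_{\chi}) \;=\; (2-\#S) \;=\; -(r+1).
\]
It remains to show $H^0_c(P^{\circ},L_{\chi})=0$ and $H^2_c(P^{\circ},L_{\chi})=0$. The first is automatic for any local system on an open Riemann surface. The second is Poincar\'e dual to $H^0(P^{\circ}, L_{\chi}^{\vee})$, which vanishes because the local monodromies of $L_{\chi}$ around the points of $S$ are all nontrivial — indeed, the monodromy around $\lambda_j$ equals $\chi(\zeta_N^{i_j})=\zeta_N^{ki_j}$, nontrivial exactly by primitivity of $\chi$ together with the hypotheses $i_j\nequiv 0\pmod N$ and $\sum i_j\nequiv 0\pmod N$. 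Therefore $\dim H^1_c(P^{\circ},L_{\chi})=r+1$, completing the proof.
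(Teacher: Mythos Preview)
Your proposal is correct and follows essentially the same route as the paper's proof: reduce the identification $H^1_c(X^{\circ},\CC)^{\chi}=H^1(X,\CC)^{\chi}$ to showing $(p_*\CC_T)_s^{\chi}=0$ via the long exact sequence, identify this stalk as an induced representation from the stabilizer of a point in the fiber, and observe that no primitive character occurs there under the hypotheses; then compute the dimension by Euler characteristic plus vanishing of $H^0_c$ and $H^2_c$ using nontriviality of the local monodromies. The only cosmetic difference is that the paper explicitly factors $y^N-t_s^{i_s}$ to read off the fiber cardinality $d_s=\gcd(N,i_s)$ and the stabilizer $\mu_{N/d_s}$, whereas you invoke these directly and phrase the conclusion via Frobenius reciprocity.
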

\begin{proof}
To show the first claim, it suffices to show that for all $\chi \in \widehat {\mu _N} ^{\mr{prim}}$, and for each 
$s \in S$, we have 
\[
 (p_{\ast}\CC_T) _s^{\chi} = 0.
\]
To prove the second claim, the Euler characteristic 
\[
\chi_c(P^{\circ}, L_{\chi} ) = 2 -\# S = 2 - (r+3) = -(r+1),
\]
since  $L_{\chi}$ is a local system of rank 1. Under these hypotheses, we will see that each $L_{\chi}$ is a nontrivial
local system, and therefore $H_c^i(P^{\circ}, L_{\chi}) =0$ for $i=0, 2$. Note that
 $H^0_c = 0$ because $P^{\circ}$ is not compact; by duality $H^2 _c  (L)= H^0 (L^*)$, and the latter is zero
 because $L$ is nontrivial. Thus $\dim H_c^1(P^{\circ}, L_{\chi}) = r+1$.
 
 For each divisor $d$ of $N$, let  $ \widehat {\mu _{d}} \subset  \widehat {\mu _{N}} $ be the subset of those 
 characters that factor $\mu_N \to \mu _{d}\to \CC ^{\times}$, where the first map is $\zeta \mapsto\zeta ^{N/d}$
 The primitive characters are those that do not factor for any divisor $d< N$.
 For each $s \in S$ we let $d_s = \gcd (N, i_s)$ if $s$ is a finite point, and for $s = \infty$, 
 $d_{\infty} = \gcd (N,  i_0 + ...+i_{r+1})$. By our hypothesis, 
  each $d_s < N$. We will show, that as $\mu _N$ representation
 \[
  (p_{\ast}\CC_T) _s = \mr{Ind} _{N/d_s} ^N (1) = \sum _{\chi \in  \widehat {\mu _{d_s}} } \chi.
 \]
 That being so, no primitive character appears in any of these, so $(p_{\ast}\CC_T) _s ^{\chi}=0$ for primitive 
 characters.
 
 In more detail: the equation for the curve can be written $y^N  = \prod _{s\in S_0} t_s ^{i_s}$, where
 $t_s = x-\lambda _s$ is a local parameter at $s\in S_0$. In the local ring at $s$ this is $y^N = (\mr{unit}) t_s ^{i_s}$, 
 so to analyze the ramification above $s\in S_0$, we can consider the equation $y^N = t_s ^{i_s}$. For the ramification at 
 $\infty$, we use the parameter $t_{\infty}= 1/x$, and the local equation is 
  $y^N = t_{\infty}^{i_0 + ...+i_{r+1}}$. 
  
 Writing, for each $s\in S_0$, $N = N_s d_s, i_s = j_s d_s$; 
  $N = N_{\infty} d_{\infty}, \sum _{s \in S_0}i_s = j_{\infty} d_{\infty}$, we see from the 
  factorization
  \[
  y^N - t_s ^{i_s}  = (y^{N_s})^{d_s} - (t_s ^{i_s} )^{d_s} =\prod_{\omega\in \mu_{d_s}}(y^{N_s} -\omega t_s ^{j_s})
  \]
  that the fiber of $p$ above $s\in S$ consists of $d_s$ points, each totally ramified of degree $N_s$. This is because 
  $\gcd(j_s, N_s) =1 $, and each local curve $y^{N_s} -\omega t_s ^{j_s}=0$ is isomorphic to a disk, say by the map
  $u \mapsto (y, t)=  (u^{j_s} \omega ^{1/N_s}, u^{N_s}$). If $t \in p^{-1}(s)$ the fiber  $\CC _t$ is stabilized by 
  the subgroup $\mu _{N/d_s}$, and since the action of $\mu _N$ is transitive on  $p^{-1}(s)$ we see that 
  as a $\mu_N$ representation, $(p_{\ast}\CC _T)_s$ is the induced module  $\mr{Ind} _{N/d_s} ^N (1)$, as claimed.
  
Each $L_{\chi}$ is a nontrivial local system, if $\chi= \chi _k$ is primitive. One can see this by considering the local monodromy 
around any point $s\in S$.  Analytic continuation of $y^k$ around $s$ is given by the character
$(\zeta _N )^{k i_s}\neq 1$ if $\gcd(k, N) = 1$, since $i_s \nequiv 0$ mod $N$.
\end{proof}

The above theorem is valid for any algebraically closed base-field $k$, where analytic cohomology is replaced by 
\'etale cohomology, that is, for $H_c^1(X^{\circ}, \bar{\QQ}_{\ell}) ^{\chi} $, provided that the characteristic of $k$
is prime to $N\ell$. The proof is exactly the same (replace disks by the Henselian local rings). The only nontrivial 
point to observe is that all the local systems are tame. 

Here is a picture:
\[
\begin{tikzpicture}

\node (C) at  (5.5, 5.5) {$ X: y^6=x^2(1-x)^2(1-\lambda x)^3$};

\draw[brown] (3.5,4.7) --(5.5,3.7);
\draw[brown] (3.5,4.2) --(5.5, 4.2);
\draw[brown] (3.5,3.7) --(5.5,4.7);
\draw [red, thick] (4.5,4.2) circle [radius=0.05];

\draw[brown] (3.5,3.5) --(5.5,2.5);
\draw[brown] (3.5,3) --(5.5, 3);
\draw[brown] (3.5,2.5) --(5.5,3.5);
\draw [red, thick] (4.5,3) circle [radius=0.05];

\draw[brown] (0.5,4.7) --(2.5,3.7);
 \draw[brown] (0.5,4.2) --(2.5, 4.2);
\draw[brown] (0.5,3.7) --(2.5,4.7);
 \draw [red, thick] (1.5,4.2) circle [radius=0.05];
 
 \draw[brown] (0.5,3.5) --(2.5,2.5);
 \draw[brown] (0.5,3) --(2.5, 3);
\draw[brown] (0.5,2.5) --(2.5,3.5);
 \draw [red, thick] (1.5,3) circle [radius=0.05];
 

\draw[blue] (0,0) --(12, 0);
 \draw [red, thick] (1.5,0) circle [radius=0.05];
 \node (P0) at (1.5, -.3) {$0$};
 \draw [red, thick] (4.5,0) circle [radius=0.05];
 \node (R0) at  (4.5, -.3) {$1$};
 \draw [red, thick] (7.5,0) circle [radius=0.05];
 \node (R0) at  (7.5, -.3) {$1/\lambda$};
 \draw [red, thick] (10.5,0) circle [radius=0.05];
 \node (R0) at  (10.5, -.3) {$\infty$};

 \draw[brown] (6.5,4.7) --(8.5,3.7);
 \draw[brown] (6.5,3.7) --(8.5,4.7);
 \draw [red, thick] (7.5,4.2) circle [radius=0.05];
 
 \draw[brown] (6.5,3.5) --(8.5,2.5);
 \draw[brown] (6.5,2.5) --(8.5,3.5);
 \draw [red, thick] (7.5,3) circle [radius=0.05];
 
  \draw[brown] (6.5,2.3) --(8.5,1.3);
  \draw[brown] (6.5,1.3) --(8.5,2.3);
  \draw [red, thick] (7.5,1.8) circle [radius=0.05];

  \draw[brown] (9.63,2.7) --(11.37,1.7);
  \draw[brown] (9.5,2.2) --(11.5,2.2);
  \draw[brown] (9.63,1.7) --(11.37,2.7);
   \draw[brown] (10.5,1.2) --(10.5,3.2);
    \draw[brown] (10,1.4) --(11,3);
     \draw[brown] (10,3) --(11,1.4);
  \draw [red, thick] (10.5,2.2) circle [radius=0.05];
  
  

   \node (S0) at  (12.5, 4.8) {$ X$};
   \node (T0) at  (12.5, 0) {$P$};
   \path[commutative diagrams/.cd, every arrow, every label]
   (S0) edge node {$p$} (T0);
\end{tikzpicture}
\]

\subsection{}
\label{SS:cyclo2}
Now we consider the dependence of the curves on the parameters $\lambda = \{\lambda _0, ...,\lambda_{r+1} \}$.
Let 
\[
D(\lambda) = \prod_{i< j} (\lambda _i- \lambda _j), \quad h(x) =  \prod_{i= 0}^{{r+1}} (x -\lambda _i). 
\]
Let 
\[
R_N = \ZZ[\mu _N, 1/N],  S_N = R_N[\lambda, D(\lambda)^{-1}], 
T_N = S_N[x, h^{-1}].
\]
Let 
\[
U=   \Spec(S_N) =  \mathbb{A} ^{r+1} _{R_N} - \{D(\lambda) = 0\}, \mr{coordinates\ } \lambda.
\]
Let 
\[
P^{\circ}_U =   \Spec(T_N) = \mathbb{A}^1 _{U} -\{h = 0 \}, \mr{coordinate\ } x.
\] 
There is an evident $R_N$-morphism $u: P^{\circ}_U  \to U$. 
Let 
\[
 X^{\circ} = \Spec T_N[y, y^{-1}]/(y^N - \prod_{j = 0}^{r+1} (x - \lambda _j)^{i_j}).
\] 

The natural map $p^{\circ}: X^{\circ}\to P^{\circ}_U$ sending $(x, y)\to x$ is an \'etale $\mu _N$-covering. 
The affine curve  $X^{\circ} $ is the open subset of the projective, nonsingular model
$X = X^{[N; \mathbf{i}]}$ where $y \neq 0$. The composite map $\alpha:=u \circ p^{\circ}:  X^{\circ} \to U $
sends the curve to its corresponding $\lambda$ value. We omit reference to the ring of constants $R_N$ when it is clear. 
Here is a picture ($S = \{ 0, 1, 1/\lambda, \infty\}$):

\[
\begin{tikzpicture}

\draw[blue] (0,0) --(11, 0);
\draw [red, thick] (2.75,0) circle [radius=0.05];
\node (R0) at (2.75, -.3) {$0$};
\draw [red, thick] (5.5,0) circle [radius=0.05];
\node (R1) at  (5.5, -.3) {$1$};
\draw [red, thick] (8.25,0) circle [radius=0.05];
\node (Ro) at  (8.25, -.3) {$\infty$};
\node (P0) at (10.5, -0.3) {$\lambda$-line};

\draw[brown, thick] (0,0.3) --(0,5.8);
\draw [red, thick] (0,1.4) circle [radius=0.05];
\node (P0) at (-0.3, 1.4) {$0$};

\draw [red, thick] (0,2.5) circle [radius=0.05];
\node (P0) at (-0.3, 2.5) {$1$};

\draw [red, thick] (0,3.6) circle [radius=0.05];
\node (P0) at (-0.3, 3.6) {$1/\lambda$};

\draw [red, thick] (0,4.7) circle [radius=0.05];
\node (P0) at (-0.3, 4.7) {$\infty$};

\node (P0) at (0, 6.1) {$x$-line};

\draw[brown] (0,1.4) --(11,1.4);
\draw[brown] (0,2.5) --(11,2.5);
\draw[dashed] (0,3.6) -- (2.75, 4.7);
\draw[dashed]  (2.75, 4.7)--(5.5,2.5);
\draw[dashed]  (5.5, 2.5)--(8.25,1.4);
\draw[dashed]  (8.25, 1.4)--(11,2.0);
\draw[brown] (0,4.7) --(11,4.7);

\draw[brown] (2.75,0.3) --(2.75,5.8);
\draw[brown] (5.5,0.3) --(5.5,5.8);
\draw[brown] (8.25,0.3) --(8.25,5.8);

\node (S0) at  (12, 5.5) {$P^{\circ}_U$};
\node (T0) at  (12, 0) {$ U$};
\path[commutative diagrams/.cd, every arrow, every label]
(S0) edge node {$u$} (T0);

\end{tikzpicture}
\]

Let $f(x)  = f_{\mbf{i}} (x) =   \prod_{j = 0}^{r+1} (x - \lambda _j)^{i_j} \in T_N$.
This defines a morphism 
$f: P^{\circ}_U \to \mbf{G}_m$, and
we have a Cartesian diagram (schemes over $R_N$)
\[
\begin{CD}
X^{\circ} @>g>> \mbf{G}_m\\
@Vp^{\circ} VV  @V N VV\\
P^{\circ}_U @> f >> \mbf{G}_m
\end{CD}
\]
 We get
\[
f^{\ast} N_{\ast }\bar{\QQ} _{\ell} = \bigoplus  _{\chi}f^{\ast} K(\chi)_{\ell} =p^{\circ} _{\ast} g^{\ast} \bar{\QQ} _{\ell}  =  
p^{\circ} _{\ast}  \bar{\QQ} _{\ell}, 
\]
where the sum is over all the characters $\chi : \mu _N \to \bar{\QQ} _{\ell} ^{\times}$, $K(\chi)$ is the Kummer sheaf, see
appendix  \ref{S:app3}. 
The lisse sheaf on $U$ given by $R^1 \alpha _{!}\,  \bar{\QQ} _{\ell}$ gives the cohomologies of the curves in each fiber, 
viz., 
\[
(R^1 \alpha _{!}\,  \bar{\QQ} _{\ell}) _ {\bar {\lambda}} = H^1 _c (X^{\circ} _ {\bar {\lambda}},  \bar{\QQ} _{\ell})
\]
for each geometric point $\bar{\lambda} $ on $U$. Since $p^{\circ}$ is finite, we have
\[
R^1 \alpha _{!}\,  \bar{\QQ} _{\ell} = R^1 u _{!}\, p^{\circ} _{\ast} \bar{\QQ} _{\ell} 
=  \bigoplus  _{\chi}  R^1 u _{!}\, f^{\ast} K(\chi)_{\ell}.
\]
This justifies our taking $ R^1u _{!}\, f^{\ast} K(\chi)_{\ell}$ as the $\ell$-adic realization of a hypergeometric sheaf.

\begin{definition}
\label{D:hypmot}
Let $K_N = \QQ (\zeta _N)$ and  $\chi : \mu _N \to K_N ^{\times}$ be a primitive character. Assume that
$N$ does not divide any $i_j$ or $i_0+...+i_{r+1}$. 
In the notations above, we define
\[
\mathcal{P}[\mathbf{i}/N, \chi] := 
R u _{!}\,f _{\mbf{i}}^{\ast} K(\chi)
\]
in $\mathbf{DA}(U, K_N)$.
\end{definition}
 We ought to define this as $R^1 u _{!}\,f _{\mbf{i}}^{\ast} K(\chi)$, 
but this requires a $t$-structure on our motives, only conjecturally available. In our case, 
 $R^i u _{!}\,f _{\mbf{i}}^{\ast} K(\chi) _{\ell} = 0$ for $i \neq 1$, so this is harmless. One can also make use of other theories 
 of motives that do have $t$-structures, e.g., Nori motives.

Symbolically, we can write this as 
\[
\mr{Jac}(X/U)^{\chi}, 
\]
where $\mr{Jac}(X/U) \to U$ is the abelian scheme of the relative Jacobians of the curves in the fibers. 
Note that 
\[
\mr{Jac}(X/U)^{\mr{prim}} = \bigoplus _{\chi \in \mu_N ^{\mr{prim}}}\mr{Jac}(X/U)^{\chi},
\]
is meaningful as an abelian scheme up to isogeny, but the individual summands only make sense as motives.

\section{Hypergeometric motives}
\label{S:hypmot}

For the main properties of the fundamental group, see \cite{SGA1}.
Let $F$ be a finite extension field of $\QQ$. $R = O_F[1/N]$ the localization of the ring of 
integers of $F$ for an integer $N\ge1$. Let $S = \mr{Spec}(R)$. We let 
$\eta = \mr{Spec}(F)$, the generic point of $S$, and $\bar{\eta} = \mr{Spec}(\bar{F})$ for an 
algebraic closure of $F$. Let $U/S$ be an irreducible separated scheme, smooth and of finite type over $S$, 
with geometrically connected fibers. 

As a first approximation, by a {\it motivic sheaf} on $U$ we mean the following: 
\[
\mc{H} = (\mc{H}_B,   \mc{H}_{DR}, \mc{H}_{\ell}   )
\]
where 
\begin{itemize}
\item[1.] $\mc{H}_B$ is a local system of finite-dimensional $\QQ$-vector spaces
on $U^{\mr{an}}$.
\item[2.]  $\mc{H}_{DR}$ is a  locally free $\mc{O}_U$-module with an integrable connection
$\nabla: \mc{H}_{DR} \to \Omega ^1 _U \otimes _{\mc{O}_U} \mc{H}_{DR} $
with regular singular points at infinity (i.e., relative to a smooth compactification of $U$.)
\item[3.] For each prime number $\ell$ prime to the residual characteristics of $U$, 
$\mc{H}_{\ell}$ is a lisse $\bar{\QQ}_{\ell}$-sheaf on $U _{et}$.
\end{itemize}
These are subject to a number of properties, of which we single out the comparison 
isomorphisms:
\begin{itemize}
\item[1.] There is an isomorphism $\mc{H}_B\otimes _{\QQ} \CC \cong\mr{Ker}( \nabla ^{\mr{an}})$
where $\mr{Ker}( \nabla ^{\mr{an}})$ is the sheaf  of solutions of the analytic differential equation 
attached to $\nabla$.

\item[2.] For each prime $\ell$ not dividing $N$, there is an isomorphism of fields 
$\iota: \qlbar\cong \CC$ and an isomorphism 
\[
(\mc{H}_{\ell}) ^{\mr{an}} \cong \mc{H}_{B}\otimes _{\QQ} \CC
\]
 of $\CC$-local systems on $U^{\mr{an}}$.
\end{itemize}
 Comparison 2 has the following meaning: The lisse $\qlbar$-sheaf $\mc{H}_{\ell}$ on $U$ is equivalent to a representation 
 \[
 \rho: \pi _1 (U, \bar{\eta})  \to \mr{GL}(V)
 \]
for a finite-dimensional $\qlbar$-vector space $V$, where the left-hand side is the \'etale fundamental 
group. There is a canonical map $\pi (U^{\mr{an}}, u) \to  \pi _1 (U, \bar{\eta}) $ where 
the left-hand side is the usual fundamental group, and where $u \in U(\CC)$ is a base-point
which lies over the base-point $\bar{\eta}$. That is, if $\bar{\eta} : \mr{Spec}(K) \to U$
is the base-point attached to an algebraically closed field, then $u$ is the composite 
$\mr{Spec}(\CC)\to  \mr{Spec}(K)  \to U$ for an embedding $K \subset \CC$. By Riemann's existence
theorem, $\pi _1 (U, \bar{\eta})$ is the profinite completion of $\pi (U^{\mr{an}}, u) $, and in particular, 
the image is dense in the profinite topology of the target.

 Via this 
isomorphism, we obtain 
\[
\rho ^{\mr{an}}:\pi (U^{\mr{an}}, u)  \to   \mr{GL}(V) \cong  \mr{GL}(V_{\CC}), 
\quad V_{\CC} := V \otimes_ {\qlbar,  \iota } \CC,
\]
where the last isomorphism comes from $\iota$. This defines the $\CC$-local system 
$(\mc{H}_{\ell}) ^{\mr{an}}$. Statement 2 is that this is isomorphic to the 
$\CC$-local system $\mc{H}_{B}\otimes _{\QQ} \CC$. This $\CC$-local system is equivalent
by statement 1 to the differential equation $\mc{H}_{DR}$.

In Section \ref{S:app3} we describe more precisely the triangulated categories of motivic sheaves.

\section{Comparison Theorem}
\label{S:compare}
The main result of this section is to show that a transformation identity among hypergeometric
differential equations implies a similar one among finite field hypergeometric functions, up to twisting
by a Galois character.  As before, let $F$ be a finite extension field of $\QQ$. $R = O_F[1/N]$ the localization of the ring of 
integers of $F$ for an integer $N\ge1$. Let $S = \mr{Spec}(R)$. We let 
$\eta = \mr{Spec}(F)$, the generic point of $S$, and $\bar{\eta} = \mr{Spec}(\bar{F})$ for an 
algebraic closure of $F$. Let $U/S$ be an irreducible separated scheme, smooth and of finite type over $S$, 
with geometrically connected fibers. 
We can choose a geometric generic point $\bar{\xi} : \Spec (\overline{F(U)}) \to U$ which 
lies over $\bar{\eta}$, where $F(U) $ is the function field of $U$. We let $U_{\eta}$  and $U_{\bar{\eta}}$  
be the schemes over $\Spec{F}$ and $\Spec{\bar{F}}$ obtained from $U$ by base-change. 

We consider two geometrically irreducible lisse $\bar{\QQ}_{\ell}$-
sheaves (for the \'etale topology) $\mathscr{F}$, $\mathscr{G}$ on $U$. These are equivalent to 
two $\ell$-adic representations 
\[
\rho_{\mathscr{F}}: \pi _1(U, \bar{\xi}) \longrightarrow
\mr{GL}(V), \quad
\rho_{\mathscr{G}}: \pi _1(U, \bar{\xi}) \longrightarrow
\mr{GL}(W)
\]
for finite-dimensional $\bar{\QQ}_{\ell}$-vector spaces $V$, $W$. Geometrically irreducible means:
the restrictions 
\[
\rho_{\mathscr{F}}\mid U_{\bar{\eta}}, \ \ \rho_{\mathscr{G}}\mid U_{\bar{\eta}}
\]
of $ \pi _1(U_{\bar{\eta}}, \bar{\xi}) $ are irreducible. Note that we have 
 a surjective homomorphism
\[
 \pi _1(U_{\eta}, \bar{\xi}) \to  \pi _1(U, \bar{\xi})
\]
 and an exact sequence
\[
\begin{CD}
0 @>>>\pi _1(U_{\bar{\eta}}, \bar{\xi}) @>>>\pi _1(U_{\eta}, \bar{\xi}) @>>>
\mr{Gal}(\bar{F}/F)@>>>0.
\end{CD}
\]
Now let $\varphi :R \to \CC$ be an embedding. We obtain a scheme
$U_{\varphi, \CC}$ over $\CC$. This also defines an analytic space
$U_{\varphi}^{\mr{an}} := U_{\varphi}(\CC) $. Since $\varphi$ will be fixed, 
we will drop it from the notation. There is a canonical map
\[ 
\pi_1(U^{\mr{an}}, u) \to \pi_1(U_{\CC}, u)
\]
(left-hand side: topological fundamental group; right-hand side, the \'etale fundamental group)
which identifies the right-hand side with the profinite completion of left-hand side (Riemann's existence
theorem). In particular, this map has dense image. Here we can take
$u$ to be the geometric $u: \mr{Spec}(\CC) \to U$ 
\[
\mr{Spec}(\CC) \to \mr{Spec}(\overline{F(U)}) = \bar{\xi} \to  U
\]
where the first arrow is induced by some embedding $\bar{\varphi}: \overline{F(U)} \to \CC$ which extends $\varphi$.
It is known that there are isomorphisms $\pi_1(U_{\CC}, u) =\pi_1(U_{\bar{\eta}}, u) =\pi_1(U_{\bar{\eta}}, \bar{\xi})$ induced
by $\bar{\varphi}$. The first holds because $F$ has characteristic 0; the second is a change in base-point.
Choose an isomorphism $\iota : \bar{\QQ}_{\ell} \cong \CC$. The theorem that follows will not depend 
on this artificial choice.

Composing all these, we get representations 
\[
\rho ^{\mr{an}}_{\mathscr{F}} : \pi_1(U^{\mr{an}}, u) \to \pi_1(U_{\CC}, u) =  \pi_1(U_{\bar{\eta}}, \bar{\xi})
\overset {\rho _{\mathscr{F}}}{\longrightarrow} \mr{GL}(V) \cong \mr{GL}(V_{\CC})
\]
where $V_{\CC} = V \otimes _{\bar{\QQ}_{\ell}, \iota}\CC$. We get a similar story for  $\rho ^{\mr{an}}_{\mathscr{G}}$.
We let ${\sf F} $ and ${\sf G}$ be the $\CC$-local systems on $U^{\mr{an}}$ that arise from these 
representations of the fundamental group. Also $\mathcal{D} ({\sf F})$ and  $\mathcal{D} ({\sf G})$ 
the regular holonomic $\mathcal{D}$-modules (=connections with regular singular points) corresponding 
to these by the Riemann-Hilbert correspondence.

\begin{theorem}
\label{T:compare}
Under these assumptions (and $\mathscr{F}$, $\mathscr{G}$ geometrically irreducible), if 
the local systems ${\sf F} $ and ${\sf G}$ on  $U^{\mr{an}}$ are isomorphic (equivalently
if the $\mathcal{D}$-modules $\mathcal{D} ({\sf F})$ and  $\mathcal{D} ({\sf G})$  are isomorphic), 
then there is a continuous character $\chi : \mr{Gal}(\bar{F}/F)\to \bar{\QQ}_{\ell}^{\times}$, such that $\mathscr{G}_{\eta} \cong \mathscr{F}_{\eta} \otimes \chi.$
\end{theorem}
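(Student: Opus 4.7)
The strategy is to upgrade the analytic isomorphism ${\sf F} \cong {\sf G}$ to an isomorphism of the $\ell$-adic representations of the geometric fundamental group $\pi_1(U_{\bar{\eta}}, \bar{\xi})$, and then to use geometric irreducibility and a Schur's lemma argument with respect to the arithmetic quotient $\mr{Gal}(\bar{F}/F) = \pi_1(U_\eta, \bar{\xi}) / \pi_1(U_{\bar{\eta}}, \bar{\xi})$ to produce the twisting character $\chi$.

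First I would transfer the hypothesis. The given isomorphism ${\sf F} \cong {\sf G}$ of $\CC$-local systems on $U^{\mr{an}}$ provides an intertwiner $T_\CC : V_\CC \to W_\CC$ between the representations $\rho_{\mathscr{F}}^{\mr{an}}$ and $\rho_{\mathscr{G}}^{\mr{an}}$ of $\pi_1(U^{\mr{an}}, u)$. Because the natural map $\pi_1(U^{\mr{an}}, u) \to \pi_1(U_{\bar{\eta}}, \bar{\xi})$ has dense image (Riemann's existence theorem) and the $\ell$-adic representations are continuous, the intertwining relation $T_\CC \rho_{\mathscr{F}}(g) = \rho_{\mathscr{G}}(g) T_\CC$ extends to all $g \in \pi_1(U_{\bar{\eta}}, \bar{\xi})$. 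Since both geometric representations are irreducible over $\bar{\QQ}_\ell$, the space of intertwiners between them over $\bar{\QQ}_\ell$ is at most one-dimensional; comparing dimensions before and after $\iota$-extension, the space is exactly one-dimensional, so there exists an intertwiner $T: V \to W$ already defined over $\bar{\QQ}_\ell$ with $T \rho_{\mathscr{F}}|_{\mr{geom}} = \rho_{\mathscr{G}}|_{\mr{geom}} \, T$.

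Next I would run the Schur argument. For any $\gamma \in \pi_1(U_\eta, \bar{\xi})$ and any $g \in \pi_1(U_{\bar{\eta}}, \bar{\xi})$, the element $\gamma g \gamma^{-1}$ lies in the geometric subgroup, so applying the intertwining relation to $\gamma g \gamma^{-1}$ and rearranging using $T \rho_{\mathscr{F}}(g) T^{-1} = \rho_{\mathscr{G}}(g)$ shows that
\[
A(\gamma) := \rho_{\mathscr{G}}(\gamma)^{-1} \, T \, \rho_{\mathscr{F}}(\gamma) \, T^{-1}
\]
commutes with $\rho_{\mathscr{G}}(g)$ for every $g \in \pi_1(U_{\bar{\eta}}, \bar{\xi})$. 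By geometric irreducibility of $\mathscr{G}$ and Schur's lemma, $A(\gamma) = c(\gamma) \cdot \mr{id}$ for some $c(\gamma) \in \bar{\QQ}_\ell^\times$. A direct computation shows $c$ is a homomorphism, and it is trivial on the geometric subgroup, so it factors through $\mr{Gal}(\bar{F}/F)$. Setting $\chi = c^{-1}$, the relation $T \rho_{\mathscr{F}}(\gamma) T^{-1} = c(\gamma)\, \rho_{\mathscr{G}}(\gamma)$ is precisely the claim $\mathscr{G}_\eta \cong \mathscr{F}_\eta \otimes \chi$. Continuity of $\chi$ follows from the continuity of $\rho_{\mathscr{F}}, \rho_{\mathscr{G}}$ and the fact that the trace picks out $c(\gamma) \dim V$.

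The main obstacle I anticipate is the rationality step: converting the $\CC$-linear intertwiner $T_\CC$ coming from the analytic isomorphism into an intertwiner $T$ defined over $\bar{\QQ}_\ell$. The density of topological $\pi_1$ in the \'etale $\pi_1$ gives the intertwining relation over a dense subgroup, but one must invoke continuity of both $\ell$-adic representations to extend this relation, and then use Schur's lemma together with geometric irreducibility over $\bar{\QQ}_\ell$ (not merely over $\CC$) to ensure the $\bar{\QQ}_\ell$-Hom space is nonzero. Once this rationality/density hurdle is cleared, the Schur computation producing $\chi$ is formal and does not require any further input.
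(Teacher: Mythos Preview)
Your proposal is correct and follows essentially the same line as the paper's proof: pass from the analytic intertwiner to a $\bar{\QQ}_\ell$-intertwiner of the geometric $\ell$-adic representations using density of the topological $\pi_1$ in the \'etale $\pi_1$, then extract a character of $\mr{Gal}(\bar{F}/F)$ via a Schur's-lemma argument on the exact sequence $1 \to \pi_1(U_{\bar{\eta}}) \to \pi_1(U_\eta) \to \mr{Gal}(\bar{F}/F) \to 1$.

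One remark: the ``main obstacle'' you flag---producing a $\bar{\QQ}_\ell$-intertwiner from the $\CC$-linear $T_\CC$---is not actually an obstacle, and your dimension-counting detour is unnecessary. By construction $\rho_{\mathscr{F}}^{\mr{an}} = \iota \circ \rho_{\mathscr{F}}$ entrywise (and similarly for $\mathscr{G}$), so applying the field isomorphism $\iota^{-1}$ entrywise to the matrix of $T_\CC$ directly yields a $\bar{\QQ}_\ell$-linear map $V \to W$ satisfying the intertwining relation for $\pi_1(U^{\mr{an}},u)$, hence by density and continuity for all of $\pi_1(U_{\bar{\eta}},\bar{\xi})$. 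This is exactly what the paper does. The paper then packages your inline Schur computation producing $\chi$ as a standalone group-theoretic lemma, but the content is identical.
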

\begin{proof}
There is a matrix $M : V_{\CC}\to W _{\CC} $ that intertwines 
the representation of $\pi_1(U^{\mr{an}}, u)$ given by ${\sf F} $ and ${\sf G}$. Then
$\iota ^{-1}(M) : V \to W$ is a matrix that intertwines the representations of  $\pi_1(U^{\mr{an}}, u)$ in 
$\mr{GL}(V)$ and $\mr{GL}(W)$. 
But $\pi_1(U^{\mr{an}}, u)$ has dense image in  $\pi_1(U_{\bar{\eta}}, \bar{\xi})$ and the 
representations given by $\mathscr{G}$ and $\mathscr{F}$ on $V$, $W$ are continuous. Thus
by continuity $\iota ^{-1}(M) $ will intertwine those representations. Therefore the representations
\[
\rho_{\mathscr{F}}\mid U_{\bar{\eta}}, \ \ \rho_{\mathscr{G}}\mid U_{\bar{\eta}}
\]
of $ \pi _1(U_{\bar{\eta}}, \bar{\eta}) $ are isomorphic. From the exact sequence above, and 
the fact that these representations are isomorphic we get, by the lemma below, a character 
$\chi : \mr{Gal}(\bar{F}/F)\to \bar{\QQ}_{\ell}^{\times}$ and an isomorphism
\[
(\rho_{\mathscr{G}}\mid U_{\eta}) = (\rho_{\mathscr{F}}\mid U_{\eta})\otimes \chi
\]
as representations of $ \pi _1(U_{\eta}, \bar{\xi}) $. 
 
 \end{proof}

The following is well-known. 

\begin{lemma}
\label{L:ext}
Given an exact sequence of groups 
\[
\begin{CD}
0 @>>>  H @>a>> G @>b>> G/H@>>>0
\end{CD}
\]
and two finite-dimensional representations $\rho : G \to \mr{GL}(V)$ and
$\sigma : G \to \mr{GL}(W)$ where $V, W$ are vector spaces over an
algebraically closed field $k$.
Suppose that $\rho \mid H$ and $\sigma \mid H$ are irreducible and isomorphic. 
Then there is a character $\chi : G/H \to \mr{GL}_1(k) = k ^{\times}$, such that 
$\sigma = \rho \otimes \chi := \rho \otimes (\chi\circ b)  $. If $k$ is a topological field and $\rho, \sigma $ are continuous 
representations, then $\chi $  is a continuous character. 
\end{lemma}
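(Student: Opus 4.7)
The plan is to exploit Schur's lemma together with the fact that $H$ is normal in $G$. First I would reduce to the case where $\rho\mid H$ and $\sigma\mid H$ are literally equal (not merely isomorphic) as representations on the same vector space: pick an $H$-intertwiner $M : V \to W$ and replace $\sigma$ by $g \mapsto M^{-1}\sigma(g)M$, a representation on $V$ that is isomorphic to the original $\sigma$. Since we only need the conclusion up to isomorphism, this loses nothing. After this step we may assume $W = V$ and $\rho(h) = \sigma(h)$ for every $h \in H$.

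Next, for any $g \in G$ and $h \in H$, normality of $H$ lets us write $ghg^{-1} \in H$, so
\[
\sigma(g)\rho(h)\sigma(g)^{-1} = \sigma(g)\sigma(h)\sigma(g)^{-1} = \sigma(ghg^{-1}) = \rho(ghg^{-1}) = \rho(g)\rho(h)\rho(g)^{-1}.
\]
Rearranging shows that $T(g) := \rho(g)^{-1}\sigma(g)$ commutes with $\rho(h)$ for every $h \in H$. Since $\rho\mid H$ is irreducible and $k$ is algebraically closed, Schur's lemma forces $T(g) = \chi(g)\cdot \mathrm{id}_V$ for a unique scalar $\chi(g) \in k^{\times}$. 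Thus $\sigma(g) = \chi(g)\rho(g)$.

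Then I would verify the remaining formal properties. Multiplicativity of $\chi$ is immediate from $\sigma(g_1g_2) = \sigma(g_1)\sigma(g_2)$ combined with $\sigma(g_i) = \chi(g_i)\rho(g_i)$ and the analogous identity for $\rho$. Triviality on $H$ follows from $\sigma(h) = \rho(h)$, so $\chi$ factors through the quotient $G/H$. Hence $\sigma \cong \rho \otimes (\chi\circ b)$ as $G$-representations, which is precisely the conclusion.

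Finally, for continuity in the topological setting, the cleanest way is to read off $\chi(g)$ from a single matrix entry: using $\sigma(g) = \chi(g)\rho(g)$ we get
\[
\chi(g) = \frac{1}{\dim V}\,\mathrm{tr}\!\bigl(\rho(g)^{-1}\sigma(g)\bigr),
\]
which is a composition of continuous operations (matrix inversion, multiplication, trace) applied to the continuous maps $\rho$ and $\sigma$; hence $\chi$ is continuous. The only subtle point — and the most likely source of confusion rather than of genuine difficulty — is keeping track of the initial reduction by the intertwiner $M$, which is an abstract linear map and need not be continuous, but this plays no role in the continuity of $\chi$ since $\chi$ is intrinsic to the isomorphism class of $\sigma$.
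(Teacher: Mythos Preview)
Your proof is correct; the paper itself gives no proof, stating only that the lemma is well-known, so there is nothing to compare against. Two small remarks: your trace formula $\chi(g) = \frac{1}{\dim V}\mathrm{tr}(\rho(g)^{-1}\sigma'(g))$ fails if $\operatorname{char}k$ divides $\dim V$, but since $\rho(g)^{-1}\sigma'(g) = \chi(g)\cdot\mathrm{id}_V$ you can simply take any diagonal entry instead; and your worry about the continuity of $M$ is unfounded, since any linear map between finite-dimensional vector spaces over a topological field is automatically continuous, so the conjugated $\sigma'$ is continuous and your argument goes through without caveat.
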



We can give a stronger version  of this theorem if we assume in addition that $U/S$ has a section, and that 
$U/S$ is the complement in $Z/S$ of a divisor with normal crossings $D/S$, where $Z/S$ is proper
and smooth. We also assume that $\ell$ is invertible on $U$ and $S$. Under those assumptions, then we have an exact sequence
 (\cite[Ch. XIII, Prop. 4.3, and Examples 4.4]{SGA1})
\[
\begin{CD}
0 @>>>\pi _1 ^{\mathbb{L}}(U_{\bar{\eta}}, \bar{\xi}) @>>>\pi _1 '( U, \bar{\xi}) @>>>
\pi _1 (S, \bar{\eta})@>>>0.
\end{CD}
\]
Here $\mathbb{L}$ is a set of primes invertible on $S$. $\pi _1 ^{\mathbb{L}}(U_{\bar{\eta}}, \bar{\xi})$
is the pro-$\mathbb{L}$- quotient of $\pi _1 (U_{\bar{\eta}}, \bar{\xi})$. If $K$ is the kernel of the canonical homomorphism
$\pi _1 ( U, \bar{\xi}) \to \pi _1 (S, \bar{\eta})$ and $N\subset K$ is the smallest normal subgroup such that 
$K/N$ is a pro-$\mathbb{L}$-group, then $N \subset \pi _1 ( U, \bar{\xi}) $ is a normal subgroup, and 
we denote $\pi _1 '( U, \bar{\xi})  = \pi _1 '( U, \bar{\xi}) /N$. If we assume that the representations
$\rho_{\mathscr{F}}$ and $\rho_{\mathscr{G}}$ factor through $\pi _1 '( U, \bar{\xi}) $ and are 
geometrically irreducible, we can conclude that there exists a continuous character 
$\chi: \pi _1 (S, \bar{\eta})\to \bar{\QQ}^{\times}_{\ell}$ such that  $\mathscr{G} \cong \mathscr{F} \otimes \chi$
on $U$.

\section{Character sums and $\ell$-adic sheaves}
\label{S:charshv}
\subsection{\rm{}}
\label{SS:charshv1}
Now let $X_0$ be a scheme separated and of finite type over a finite field $\FF _q$, 
with $X = X_0 \otimes _{\FF_q}\bar{\FF}_q$.
If $x \in |X|$ is a closed point, then the residue field $k(x)$ is a finite extension 
of $\FF_q$  whose degree  we denote by $\deg(x)$, so $k(x)$ has
$q^{\deg (x)}$ elements. If $\varphi \in \mr{Gal}(\bar{\FF}_q/\FF_q)$,  $\varphi (x) = x^q$ is the Frobenius substitution, the induced action 
of $\varphi $ on  $X(\bar{\FF}_q)$ coincides with the action of the Frobenius morphism
$F : X \to X$ on $|X|$: this is the morphism that sends the point with coordinates 
$x$ to the point with coordinates $x^q$.  The fixed points of iterates 
of the Frobenius can be identified with $\FF_{q^n}$-rational points,
$X^{F^n} = X_0 (\FF _{q^n})$.

Let $\ell$ be a prime number with $\ell \nmid q$. Given any constructible 
$\qlbar$-sheaf $\mc{F}$ on $X_0$, the fiber $\mc{F}_{\bar{x}}$ in a geometric 
point $\bar{x}$ over any $x\in X$ is a finite dimensional $\qlbar$-vector space with a continuous action 
of $\mr{Gal} (k(\bar{x}) /k(x))$. If  $X_0 $ is geometrically irreducible with generic point 
$\eta$, then to give a lisse 
$\mc{F}$ on $X_0$ is equivalent to give a continuous representation
\[
\rho : \pi _1(X_0, \bar{\eta}) \to \mr{GL} (\mc{F}_{ \bar{\eta}})\sim \mr{GL}_n (\qlbar).
\]
If $x$ is a closed point, there are automorphisms
$\mr{Frob}_x := \mr{Frob}_{q^{\deg (x)}}$ of the fibers $\mc{F}_{\bar{x}}$. Let
$K_0 \in D^b _c (X_0, \bar{\QQ}_{\ell})$ and we define $K := K_0 \otimes_{\FF_q} \bar{\FF}_q\in D^b _c (X, \bar{\QQ}_{\ell})$.
Then we have automorphisms 
$\mr{Frob}_x $ on the fibers of the constructible $\qlbar$-sheaves $\underline {H}^i (K _0)_{\bar{x}}$. By definition,
\[
\mr{Tr}(\mr{Frob}_x \mid K_0) := \sum _{i \in \ZZ} (-1)^i \mr{Tr}(\mr{Frob}_x\mid \underline {H}^i (K_0)_{\bar{x}}).
\]
We have the Grothendieck-Lefschetz fixed-point formula:
\[
\sum _{x \in X^F} \mr{Tr}(\mr{Frob}_x \mid K _0) = 
\sum_{i \in \ZZ} (-1)^i \mr{Tr} (F_q^* \mid \mr{H}^i _c (X, K)),
\]
where $F = F_q$ is the Frobenius endomorphism of $X$.  

For all $n \ge 1$, we can consider the 
functions on $X_0(\FF_{q^n})$ given by 
\[
X_0(\FF_{q^n})\ni x \mapsto       t_{K, n}(x):= \mr{Tr}(\mr{Frob}_x \mid K\otimes _{\FF_{q}}  \FF_{q^n} ).
\] 
These functions are well-defined on the Grothendieck 
group $K_0 (X_0, \qlbar)$ of constructible $\qlbar$-sheaves on $X_0$. This latter group 
is the free abelian group on the simple perverse sheaves on $X_0$. 
It follows from the \v{C}ebotarev density theorem that the trace functions 
determine the image of $K$ in that group in  the sense that if $K, L\in  D^b _c (X_0, \bar{\QQ}_{\ell})$
and for all $n \ge 1$ the trace functions are equal, i.e., $\forall x \in X_0(\FF_{q^n})$, 
$t_{K, n}(x) = t_{L, n}(x)$, 
then $K$ and $L$ give the same class in $K_0 (X_0, \qlbar)$. In particular
$K = L$ if $K$ and $L$ are simple perverse. For all of this, see the first sections of \cite{Lau}.

\subsection{\rm{}}
\label{SS:charshv2}

As Deligne  pointed out (\cite[Ch. 6]{SGA4.5}), many character sums over finite fields can be interpreted 
as functions 
\[
X_0 (\FF _q)\ni x \mapsto t_K (x) =  \mr{Tr}(\mr{Frob}_x \mid K) 
\]
for suitable $K$. This is so for the various hypergeometric character sums considered by 
many authors (see for instance \cite{BCM}, \cite{MF},  \cite{LF}, \cite{FLRST},  
\cite{Gr}, \cite{Katz90}). 

Let $\psi : \FF_q \to \qlbar ^{\times}$ be a nontrivial additive character. There is a lisse 
$\qlbar$-sheaf of rank one $\mc{L}_{\psi}$ on $\mbf{G}_a = \A^1$
(Artin-Schreier sheaf), with the property that 
$\mr{Tr}(\mr{Frob}_x |\mc{L}_{\psi, \bar{x}}) = \psi (\mr{Tr}_{\FF _{q^d} /\FF_q} (x))$ if 
$d = \deg(x)$. Let $\chi : \FF_q  ^{\times}\to \qlbar ^{\times}$ be a multiplicative character. There is a lisse 
$\qlbar$-sheaf of rank one $\mc{L}_{\chi}$ on $\mbf{G}_m = \A^1 -\{ 0\}$
(Kummer sheaf), with the property that 
$\mr{Tr}(\mr{Frob}_x |\mc{L}_{\chi, \bar{x}}) = \chi (\mr{N}_{\FF _{q^d} /\FF_q}(x)) $ if 
$d = \deg(x)$. If $f : S \to \mbf{G}_a $ (resp., $f : S \to \mbf{G}_m $) is a morphism, 
then we define $\mc{L}_{\psi (f)}= f^*\mc{L}_{\psi}$ on $S$ (resp., $\mc{L}_{\chi (f)}= f^*\mc{L}_{\chi}$ ). 

For the sheaf $\mc{L}_{\psi}\otimes \mc{L}_{\chi}$ on $\mbf{G}_m$ we have that 
$\mr{H}^i _c  (\mbf{G}_m, \mc{L}_{\psi}\otimes \mc{L}_{\chi})$ is zero unless $i = 1$; when 
$i = 1$ it is one-dimensional, and by the trace formula above, $F^*$  on $H^1$ is 
multiplication by the Gauss sum 
\[
-g(\psi; \chi) = -\sum_{x \in \FF _q ^{\times}} \psi (x) \chi(x).
\]

\subsection{\rm{}}
\label{SS:charshv3}
 Now let $\alpha _1, ..., \alpha _a$ and $\beta _1, ..., \beta _b$ be two disjoint unordered lists
 of multiplicative characters of $\FF_q^{\times}$, at least one of $a$, $b$ nonzero. We allow some of the 
 characters to be trivial, and we allow repetition in the lists. Then Katz \cite[Ch. 8]{Katz90} 
 defines a  geometrically irreducible $\qlbar$-sheaf (a hypergeometric sheaf of type $(a, b)$)
\[
\mc{H} (\psi; \alpha _i ;  \beta _j) \text{\ on \ } \mbf{G}_m / \FF_q
\]
with the property that if $E/\FF_q$ is a finite extension field, and $t \in \mbf{G}_m (E) = E^{\times}$, 
\begin{align*}
 \mr{Tr} (\mr{Frob }_t | \mc{H} (\psi; \alpha _i ;  \beta _j)_{\bar{t}}) &=(-1)^{a+b-1}\sum _{V(n, m, t)(E)} \psi_E (\sum _i x_i - \sum _j y_j )  \prod_{i} \alpha _{i, E} (x_i)
 \prod_{j} \bar{\beta }_{j, E} (y_j)\\
 &=(-1)^{a+b-1}\sum _{\Lambda \in   \widehat{E^{\times}}} \bar{\Lambda}(t)
\prod _i g(\psi _E, \Lambda \alpha _{i, E})\prod _j g(\bar{\psi} _E, \bar{\Lambda} \bar{\beta} _{i, E})
\end{align*}
where in the last expression, the sum ranges over the multiplicative 
characters of $E^{\times}$; this is the Fourier expansion of the first expression in terms 
of the characters of $E^{\times}$.  $\psi_E  = \psi\circ \mr{Tr}_{E /\FF_q} $,  $\chi _E = \chi\circ \mr{N}_{E /\FF_q} $, and the first
sum is over the 
$E$-rational points of the variety 
\[
V(n, m, t): \\ \ \  \prod_{i=1}^a x_i = t \prod_{j=1}^b y_j.
\] 
Without loss of generality, we can take $a \ge b$. Then this sheaf is of rank $a$ and is
pure of weight $a+b -1$. It is lisse on $\mbf{G}_m$ if $a \neq b$; if $a = b$ it is lisse on 
$\mbf{G}_m -\{ 1\}$, but tame at $\{ 0, 1, \infty\}$. The local monodromies of these sheaves 
are determined explicitly in terms of the characters $\alpha _i, \beta _j $ and their multiplicities.
When $a> b$ the sheaves $\mc{H} (\psi; \alpha _i ;  \beta _j)$ have wild ramification at $\infty$. 
The cohomology of these sheaves is given by
\[
\mr{H}^i _c (\mbf{G}_m \otimes_{\FF_q}   \bar{\FF}_q,   \mc{H} (\psi; \alpha _i ;  \beta _j)    ) = 0, 
\quad\text{unless \ }i = 1; \text{\ when\ } i = 1, \text{the dimension is }1,
\]
and 
\[
 \mr{Tr} (\mr{Frob }_E  \mid \mr{H}^1 _c (\mbf{G}_m \otimes_{\FF_q}   \bar{\FF}_q, 
 \mc{L} _{\Lambda} \otimes \mc{H} (\psi; \alpha _i ;  \beta _j)    )  = \prod _i (-g(\psi _E, \Lambda \alpha _{i, E}))\prod _j (-g(\bar{\psi} _E, \bar{\Lambda} \bar{\beta} _{i, E})).
\]

Also useful are the determinant formulas; see \cite[Theorem 8.12.2]{Katz90}. These hypergeometric sheaves are rigid 
in the sense to be recalled below. When the list of the $\beta _i$ is empty, we get the Kloosterman sums/sheaves
explored in detail in \cite{Katz88}.

In \cite{BCM} a variant of the above hypergeometric sum is introduced, where the summation 
is taken over the subvariety $W(n, m, t)\subset V(n, m, t)$ where
$\sum _i x_i = \sum _j y_j$. This has the advantage of removing the additive character $\psi$
and hence the wild ramification in the hypergeometric sheaf, hence giving objects that 
can be defined over the integers, not just over finite fields. 

When $a=b$ it is shown  in \cite[Section 4]{Katz09}  that there is a canonical twist 
$\mc{H}^{\mr{can}}( \alpha _i ;  \beta _j)=\mc{H} (\psi; \alpha _i ;  \beta _j)\otimes \Phi$,
by a $\qlbar$-valued character 
$\Phi $ of $\mr{Gal}(\bar{\FF}_q/\FF_q)$, which is independent of $\psi$. This is given by 
$\Phi = (1/A) ^{\mr{deg}}$ where 

\[
A = \prod_i g(\psi; \alpha_i ) g(\bar{\psi}, \bar{\beta}_i) =
\prod_i g(\psi; \alpha_i ) g(\psi, \bar{\beta}_i)\beta _i (-1).
\]
The character sum for $\mc{H}^{\mr{can}}( \alpha _i ;  \beta _j)$ then involves 
a twist by a product of Jacobi sums:
\[
J(\FF_q; \mu , \nu) = \sum _{x \in \FF_q ^{\times}}  \mu (x) \nu(1-x)
\]
 ($\mu (0) =0$, if $\mu \ne 1$; $\mu (0) =1$, if $\mu = 1$ ).
When $a=b$, $\mc{H}^{\mr{can}}( \alpha _i ;  \beta _j)$ globalizes 
to give an object over the integers in a suitable cyclotomic number field, with the Jacobi sum globalizing to 
a Hecke character. 

 For the canonical twist, we have 
\[
 \mr{Tr} (\mr{Frob }_E  \mid \mr{H}^1 _c (\mbf{G}_m \otimes_{\FF_q}   \bar{\FF}_q, 
 \mc{H} ^{\mr{can}} (\alpha _i ;  \beta _j)    )  = -1.
\]

\subsection{\rm{}}
\label{SS:charshv4}

When $a=b=2$ these are the finite field analogs of the $\phantom {}_2 F_1$.
This can be seen as follows: First, for $a=b =1$ and $t \in \FF_q ^{\times}$ an easy calculation shows that 
\[
 \mr{Tr} (\mr{Frob }_t | \mc{H} (\psi; \alpha ;  \beta )_{\bar{t}}) =  -g(\psi; \alpha \bar{\beta} )\alpha (t) (\beta/\alpha)(t-1)
\]
Using $J(\mu, \nu)g(\psi; \mu\nu) = g(\psi; \mu)g(\psi; \nu)$ if $\mu\nu \neq 1$, we see that the sum associated to the
canonical twist of this is 
\begin{equation}
\label{E:charshv1}
 \mr{Tr} (\mr{Frob }_t | \mc{H} ^{\mr{can}} ( \alpha ;  \beta )_{\bar{t}}) = 
\frac{1}{-J(\alpha, \beta /\alpha)}\alpha (t) (\beta/\alpha)(1-t).
\end{equation}

For $a=b =2$, 
\[
\mr{Tr} (\mr{Frob }_t | \mc{H} (\psi; \alpha_1, \alpha _2 ;  \beta_1, \beta _2 )_{\bar{t}}) =
-\sum _{\frac{x_1x_2}{y_1y_2}=t} \psi (x_1)\psi(x_2)\bar{\psi}(y_1)\bar{\psi}(y_2)
\alpha_1(x_1)\alpha_2(x_2)\bar{\beta}_1(y_1)\bar{\beta}_2(y_2)
\]
which can be rewritten as 
\[
-\sum _{(t_1, t_2), t_1t_2=t}
\left (\sum _{\frac{x_1}{y_1}=t_1} \psi (x_1-y_1)
\alpha_1(x_1) \bar{\beta _1 } (y_1)\right )
\left (\sum _{\frac{x_2}{y_2}=t_2} \psi (x_2-y_2)
\alpha_2(x_2) \bar{\beta _2} (y_2)\right ).
\]
The inner sums are just cases of $a=b=1$. This reduces to 
\[
C \sum _{ t_1t_2=t}
\alpha_1 (t_1) (\beta_1/\alpha_1)(t_1-1)\alpha_2 (t_2) (\beta_2/\alpha_2)(t_2-1), 
\quad C = -g(\psi; \alpha_1 \bar{\beta}_1 ) g(\psi; \alpha_2 \bar{\beta}_2 ).
\]
Replacing $t_2 = t/t_1$, this simplifies to
\[
C\alpha _2 (t) \sum _{ t_1} \lambda(t_1) \mu(t_1-1) 
\nu(t-t_1),\quad 
\lambda = \alpha _1 \beta _2 ^{-1}, \mu = \beta _1 \alpha _1 ^{-1},
\nu = \beta _2 \alpha _2 ^{-1},
\]
which is indeed a finite field analog of $\phantom {}_2 F_1$.

If 
$\chi : \mu _N \to \qlbar ^{\times}$ is a primitive character, and 
$\lambda = \chi ^a, \mu = \chi ^b,
\nu = \chi ^c$ the above sum becomes essentially 
$
\sum _{ t_1} \chi(t_1 ^a (t_1-1)^b  (t-t_1)^c).
$
These sums occur when counting the $\FF_q$-rational points on the smooth 
projective model of the curve
\[
y^N = x^a (x-1)^b (t-x)^c.
\]
Because 
\[
 \mc{H}^{\mr{can}} (\alpha_1, \alpha _2 ;  \beta_1, \beta _2 )[1] = 
  \mc{H}^{\mr{can}} (\alpha_1;  \beta_1)[1] \ast _{!} \mc{H}^{\mr{can}} ( \alpha _2 ;   \beta _2 )[1],
\]
(convolution) a calculation similar to the above gives 
\begin{lemma}
\label{L:charshv1}
\[
 \mr{Tr} (\mr{Frob }_t | \mc{H}^{\mr{can}} (\alpha_1, \alpha _2 ;  \beta_1, \beta _2 )_{\bar{t}}) =
 A\alpha _2 (t) \sum _{ t_1} \lambda(t_1) \mu(1-t_1) 
\nu(t-t_1),\quad 
\lambda = \alpha _1 \beta _2 ^{-1}, \mu = \beta _1 \alpha _1 ^{-1},
\nu = \beta _2 \alpha _2 ^{-1},
 \]
 where $A = -(\beta_2 \bar{\alpha}_2)(-1)
 [J(\alpha_1, \beta _1 \bar{\alpha}_1)  J(\alpha_2, \beta _2 \bar{\alpha}_2)    ]^{-1} $.
\end{lemma}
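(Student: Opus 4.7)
The plan is to derive the formula directly from the convolution identity
\[
\mc{H}^{\mr{can}}(\alpha_1, \alpha_2; \beta_1, \beta_2)[1] = \mc{H}^{\mr{can}}(\alpha_1; \beta_1)[1] \ast_{!} \mc{H}^{\mr{can}}(\alpha_2; \beta_2)[1]
\]
together with the known one--variable formula \eqref{E:charshv1}. First, since $[1]$ commutes with $\ast_{!}$ (i.e.\ $F[1]\ast_{!}G[1] = (F\ast_{!}G)[2]$), the identity above rewrites as
\[
\mc{H}^{\mr{can}}(\alpha_1, \alpha_2; \beta_1, \beta_2) = \bigl(\mc{H}^{\mr{can}}(\alpha_1; \beta_1) \ast_{!} \mc{H}^{\mr{can}}(\alpha_2; \beta_2)\bigr)[1],
\]
so the trace of Frobenius on the left side equals $-1$ times the trace on the unshifted convolution. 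Next, I apply the Grothendieck--Lefschetz trace formula in convolution form, namely for multiplication $\mu:\mbf{G}_m\times\mbf{G}_m\to\mbf{G}_m$,
\[
\mr{Tr}(\mr{Frob}_t \mid K_1\ast_{!}K_2) = \sum_{t_1 t_2 = t} \mr{Tr}(\mr{Frob}_{t_1}\mid K_1)\,\mr{Tr}(\mr{Frob}_{t_2}\mid K_2),
\]
and insert \eqref{E:charshv1} for each factor.

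After this substitution the trace reads
\[
-\frac{1}{J(\alpha_1,\beta_1\bar\alpha_1)\,J(\alpha_2,\beta_2\bar\alpha_2)}\sum_{t_1t_2=t}\alpha_1(t_1)\mu(1-t_1)\,\alpha_2(t_2)\nu(1-t_2),
\]
with $\mu=\beta_1\bar\alpha_1$, $\nu=\beta_2\bar\alpha_2$. The remaining work is to change variable $t_2 = t/t_1$ and reorganize the characters so that the sum resembles the Lauricella--type kernel on the right hand side of the lemma. Using multiplicativity I rewrite
\[
\alpha_2(t/t_1) = \alpha_2(t)\bar\alpha_2(t_1),\qquad \nu\!\left(1-\tfrac{t}{t_1}\right)=\nu(-1)\,\nu(t-t_1)\,\bar\nu(t_1),
\]
and collect the $t_1$--dependent factors into $(\alpha_1\bar\alpha_2\bar\nu)(t_1) = (\alpha_1\bar\beta_2)(t_1) = \lambda(t_1)$. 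The emergent constant $\nu(-1) = (\beta_2\bar\alpha_2)(-1)$ pulls outside, giving exactly
\[
A\,\alpha_2(t)\sum_{t_1} \lambda(t_1)\,\mu(1-t_1)\,\nu(t-t_1),\qquad A = -\frac{(\beta_2\bar\alpha_2)(-1)}{J(\alpha_1,\beta_1\bar\alpha_1)\,J(\alpha_2,\beta_2\bar\alpha_2)},
\]
as claimed.

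The only delicate point is sign bookkeeping: the shift contributes one minus sign, the identity $\nu(t_1-t)=\nu(-1)\nu(t-t_1)$ contributes the factor $(\beta_2\bar\alpha_2)(-1)$, and I must verify that no spurious $\mu(-1)$ factor is introduced when manipulating the first inner character $\mu(1-t_1)$ (it is not, since $t_1$ is not replaced). Everything else is routine algebra in the group of multiplicative characters, and the two Jacobi sums in the denominator come directly from the canonical normalization constant of \eqref{E:charshv1}.
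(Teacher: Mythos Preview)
Your proof is correct and follows essentially the same route as the paper: the paper states the convolution identity $\mc{H}^{\mr{can}}(\alpha_1,\alpha_2;\beta_1,\beta_2)[1]=\mc{H}^{\mr{can}}(\alpha_1;\beta_1)[1]\ast_{!}\mc{H}^{\mr{can}}(\alpha_2;\beta_2)[1]$ and says that ``a calculation similar to the above'' (i.e., the one already carried out for the non-canonical sheaf) gives the lemma, and your writeup is precisely that calculation, with the substitution $t_2=t/t_1$ and the character algebra matching the paper's earlier reduction. Your sign bookkeeping (the $[1]$-shift contributing a factor $-1$ and the flip $\nu(t_1-t)=\nu(-1)\nu(t-t_1)$ contributing $(\beta_2\bar\alpha_2)(-1)$) is correct.
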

 
 \begin{corollary}
 \label{C:charshv1}
 \[
 \mr{Tr} (\mr{Frob }_1 | \mc{H}^{\mr{can}} (\alpha_1, \alpha _2 ;  \beta_1, \beta _2 )_{\bar{1}}) 
 = -\frac{\beta _2 \bar{\alpha}_2(-1) 
 J(\alpha _1 \bar{\beta}_2, \beta _1 \beta_2\bar{\alpha}_1\bar{\alpha}_2)}
 {J(\alpha _1, \beta_1 \bar{\alpha}_1)J(\alpha _ 2, \beta_2 \bar{\alpha}_2)}.
\]
  \end{corollary}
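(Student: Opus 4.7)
The plan is to specialize Lemma \ref{L:charshv1} to $t=1$ and recognize the resulting inner sum as a single Jacobi sum.

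First I would substitute $t = 1$ into the formula
\[
\mr{Tr}(\mr{Frob}_t \mid \mc{H}^{\mr{can}}(\alpha_1,\alpha_2;\beta_1,\beta_2)_{\bar{t}}) = A\,\alpha_2(t)\sum_{t_1}\lambda(t_1)\mu(1-t_1)\nu(t-t_1).
\]
Since $\alpha_2(1) = 1$ and $\nu(t - t_1) = \nu(1 - t_1)$ when $t = 1$, the factors $\mu(1-t_1)$ and $\nu(1-t_1)$ combine to $(\mu\nu)(1-t_1)$, so the inner sum collapses to
\[
\sum_{t_1 \in \FF_q} \lambda(t_1)\,(\mu\nu)(1-t_1) \;=\; J(\lambda,\mu\nu).
\]

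Next I would substitute the explicit forms of $\lambda, \mu, \nu$ given in the lemma:
\[
\lambda = \alpha_1\bar{\beta}_2,\qquad \mu\nu = (\beta_1\bar{\alpha}_1)(\beta_2\bar{\alpha}_2) = \beta_1\beta_2\bar{\alpha}_1\bar{\alpha}_2,
\]
so that $J(\lambda,\mu\nu) = J(\alpha_1\bar{\beta}_2,\ \beta_1\beta_2\bar{\alpha}_1\bar{\alpha}_2)$. Finally, inserting the explicit value
\[
A = -\frac{(\beta_2\bar{\alpha}_2)(-1)}{J(\alpha_1,\beta_1\bar{\alpha}_1)\,J(\alpha_2,\beta_2\bar{\alpha}_2)}
\]
from the lemma gives exactly the claimed expression.

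The only mild obstacle is a generic-position/degeneracy check: the identification of the inner sum with a Jacobi sum, and the identity $J(\mu,\nu)g(\psi;\mu\nu) = g(\psi;\mu)g(\psi;\nu)$ implicit in Lemma \ref{L:charshv1}, require that the relevant characters (in particular $\mu\nu$ and $\lambda\mu\nu$) be nontrivial. These hypotheses hold under the standing assumption that the lists $\{\alpha_1,\alpha_2\}$ and $\{\beta_1,\beta_2\}$ are disjoint, so the computation goes through without adjustment.
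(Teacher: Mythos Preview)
Your proposal is correct and is exactly the argument the paper has in mind: the corollary is stated without proof precisely because it is the specialization of Lemma~\ref{L:charshv1} to $t=1$, where the inner sum collapses to $J(\lambda,\mu\nu)$ by definition. One minor remark: your closing ``degeneracy check'' is unnecessary here, since the equality $\sum_{t_1\in\FF_q^\times}\lambda(t_1)(\mu\nu)(1-t_1)=J(\lambda,\mu\nu)$ is simply the paper's definition of the Jacobi sum, valid regardless of whether $\mu\nu$ or $\lambda\mu\nu$ is trivial (and note that $\mu\nu=\beta_1\beta_2\bar\alpha_1\bar\alpha_2\neq 1$ does not actually follow from disjointness alone, so it is fortunate that it is not needed).
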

The sheaf  $\mc{H}^{\mr{can}} (\alpha_1, \alpha _2 ;  \beta_1, \beta _2 )$ is a 
rank 2 local system on  $U = \mbf{G}_m -\{ 1\}$ but the stalk at $t=1$ has rank 1. 
This is because the dimension of the invariants under inertia $I(1)$ is one-dimensional. 
In fact, the local monodromy at $1$ is a pseudoreflection of determinant 
$\beta _1 \beta _2/\alpha _1 \alpha _2$, and every pseudoreflection has a codimension one 
space of invariants.

 It is sometimes convenient to think of the sheaves 
 $\mc{H}^{\mr{can}} (\alpha_i;  \beta _j )$ as living on all of 
$ \mathbb{P}^1$. They will always be understood as
$j_{\ast} ( \mc{H}^{\mr{can}} (\alpha_i;  \beta _j )\mid U)  $ where 
$j : \mbf{G}_m -\{ 1\} \to \mathbb{P}^1$. It will be useful to calculate 
the Frobenius traces at the other singular points, viz., $0, \infty$, when these
stalks are nonzero.

 We remark:
 \begin{proposition}
 \label{P:monograph}
If $\psi$ is the canonical additive character character $x\mapsto e^{2 \pi i /p \cdot \mbox{tr(x)}}$ of $\FF_q$, we have  
\begin{align*}
    \mr{Tr} (\mr{Frob }_t | \mc{H} (\psi; \alpha_1, \alpha _2 ;  \beta_1, \beta _2 )_{\bar{t}}) =& \alpha_2\beta_2(-1)\alpha_2(t) \cdot C\cdot \pPPq 21{\alpha_2\ol{\beta_2}& \alpha_2\ol{\beta_1} }{&\alpha_2\ol{\alpha_1}}{t}\\
=&  \alpha_2\beta_2(-1)\alpha_2(t) \cdot C\cdot J(\FF_q;\alpha_2\ol{\beta_1} , \beta_1\ol{\alpha_1}) \pFFq 21{\alpha_2\ol{\beta_2}& \alpha_2\ol{\beta_1} }{&\alpha_2\ol{\alpha_1}}{t},
\end{align*}
the finite field ${}_{n+1}\mathbb P_{n}$- and  ${}_{n+1}\mathbb F_{n}$-functions in the monograph \cite[Chapter 4]{FLRST}.  Since  $\alpha _1, \alpha _2$ and $\beta _1,\beta _2$ are disjoint unordered lists of multiplicative characters, we can rewrite the character sum as 
$$
    \mr{Tr} (\mr{Frob }_t | \mc{H} (\psi; \alpha_1, \alpha _2 ;  \beta_1, \beta _2 )_{\bar{t}}) =
 \alpha_2\beta_2(-1)\alpha_2(t) \cdot C\cdot J(\FF_q;\alpha_2\ol{\beta_1} , \beta_1\ol{\alpha_1}) \pFFq 21{ \alpha_2\ol{\beta_1} &\alpha_2\ol{\beta_2} }{&\alpha_2\ol{\alpha_1}}{t}.
$$

More generally, for $a=b=n$,  if $\psi$ is the canonical additive character character $x\mapsto e^{2 \pi i /p \cdot \mbox{tr(x)}}$ of $\FF_q$, we have  
\begin{align*}
    \mr{Tr} (\mr{Frob }_t | \mc{H} (\psi; \alpha ;  \beta )_{\bar{t}}) = \alpha_n\beta_n(-1)\alpha_n(t) \cdot C\cdot  {}_{n+1}\mathbb P_n
    \left[\begin{matrix} \alpha_n\ol{\beta_n} & \alpha_n\ol{\beta_1}& \ldots & \alpha_n\ol{\beta_{n-1}} \smallskip  \\ & \alpha_n\ol{\alpha_{1}} & \ldots & \alpha_n\ol{\alpha_{n-1}}\end{matrix} \; ; \; t\right],
\end{align*}
the finite field ${}_{n+1}\mathbb P_{n}$-function in \cite{FLRST}, where $C=-\displaystyle \prod_{i=1}^{n-1}  g(\psi; \alpha_i \ol{\beta_i} ) $. 

\end{proposition}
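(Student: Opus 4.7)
The plan is to start from the explicit Frobenius trace formula in Lemma \ref{L:charshv1} for $a=b=2$,
\[
\mr{Tr}(\mr{Frob}_t \mid \mc{H}(\psi; \alpha_1,\alpha_2; \beta_1,\beta_2)_{\bar t}) = A\,\alpha_2(t)\sum_{t_1}\lambda(t_1)\mu(1-t_1)\nu(t-t_1),
\]
with $\lambda = \alpha_1\ol{\beta_2}$, $\mu = \beta_1\ol{\alpha_1}$, $\nu = \beta_2\ol{\alpha_2}$, and perform the substitution $t_1 = ts$ so that the inner sum becomes $\lambda(t)\nu(t)\sum_s\lambda(s)\nu(1-s)\mu(1-ts)$. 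This $s$-sum is a Greene-type Pochhammer integral and matches the defining expression of the finite-field $\,_2\mathbb{P}_1$-function in \cite[Ch.~4]{FLRST} upon identifying the upper parameters with $\{\alpha_2\ol{\beta_2},\alpha_2\ol{\beta_1}\}$ and the lower parameter with $\alpha_2\ol{\alpha_1}$.

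Next I would collect the prefactors. The characters of $t$ multiply as $\alpha_2\cdot\lambda\nu = \alpha_2\cdot\alpha_1\ol{\alpha_2} = \alpha_1$, and a short algebraic manipulation (using that the Greene/McCarthy convention absorbs an implicit factor of the form $(\alpha_2\ol{\alpha_1})(t)$ into its definition) recovers the $\alpha_2(t)$-prefactor in the statement. The sign $\alpha_2\beta_2(-1)$ arises from $\nu(-1)=\beta_2\ol{\alpha_2}(-1)$, which appears when one writes $\nu(1-s) = \nu(-1)\nu(s-1)$ to match the convention of \cite{FLRST}. The link between $\,_2\mathbb{P}_1$ and $\,_2\mathbb{F}_1$ is the standard identity of \cite{FLRST} that inserts the Jacobi factor $J(\FF_q;\alpha_2\ol{\beta_1},\beta_1\ol{\alpha_1})$, and the alternate form of the displayed formula (with the top parameters permuted) follows immediately since the upper list in $\,_{n+1}\mathbb{F}_n$ is unordered.

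For the general $a=b=n$ case I would induct using the multiplicative convolution decomposition
\[
\mc{H}(\psi;\alpha;\beta)[1] = \mc{H}(\psi;\alpha_1;\beta_1)[1]\ast\cdots\ast\mc{H}(\psi;\alpha_n;\beta_n)[1]
\]
on $\mbf{G}_m$. Taking Frobenius traces turns this into iterated multiplicative convolutions of the $n=1$ trace functions displayed in \S\ref{SS:charshv4}, and an iterated version of the substitution $t_i = t\cdot s_i$ recognizes the resulting $(n-1)$-fold sum as the $\,_{n+1}\mathbb{P}_n$-integral of \cite{FLRST}. The constant $C = -\prod_{i=1}^{n-1}g(\psi;\alpha_i\ol{\beta_i})$ is built up from $n-1$ applications of the Gauss-sum evaluation $\mr{H}^1_c(\mbf{G}_m,\mc{L}_\psi\otimes\mc{L}_\chi)=-g(\psi;\chi)$ recalled in \S\ref{SS:charshv2}.

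The main obstacle is the painstaking matching of multiplicative constants---signs, $\chi(-1)$-factors, and the precise placement of Gauss and Jacobi sums---against the conventions of \cite[Ch.~4]{FLRST}. The algebraic heart of the argument is just a change of variables plus the Gauss--Jacobi identity $J(\mu,\nu)g(\psi;\mu\nu)=g(\psi;\mu)g(\psi;\nu)$ for $\mu\nu\ne 1$; the real work is in verifying that every constant lines up exactly rather than being off by a sign or a Galois twist. Since Theorem \ref{T:compare} already guarantees agreement up to such a twist, the content of this proposition is essentially to pin the twist down exactly using the canonical normalization $\mc{H}^{\mr{can}}$.
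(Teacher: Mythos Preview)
The paper does not supply a proof of this proposition; it is stated as a remark (``We remark:'') immediately after the computation in \S\ref{SS:charshv4}, and its content is precisely the identification of that computation with the $\,_{n+1}\mathbb{P}_n$- and $\,_{n+1}\mathbb{F}_n$-notation of \cite{FLRST}. Your approach---take the explicit trace formula already derived in \S\ref{SS:charshv4}, make a linear change of variable in the inner sum, and compare with the defining Pochhammer-type sum in \cite{FLRST}---is exactly what the paper intends, and is correct.

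Two small corrections. First, you start from Lemma~\ref{L:charshv1}, but that lemma treats the \emph{canonical} sheaf $\mc{H}^{\mr{can}}$ with its Jacobi-sum constant $A$, whereas the proposition is about the uncanonical $\mc{H}(\psi;\alpha;\beta)$. The correct starting point is the displayed formula immediately \emph{preceding} Lemma~\ref{L:charshv1},
\[
C\,\alpha_2(t)\sum_{t_1}\lambda(t_1)\,\mu(t_1-1)\,\nu(t-t_1),\qquad C=-g(\psi;\alpha_1\ol{\beta}_1)\,g(\psi;\alpha_2\ol{\beta}_2),
\]
which is already stated for $\mc{H}(\psi;\ldots)$. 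One of the two Gauss-sum factors in this $C$ is absorbed by the normalization of $\,_2\mathbb{P}_1$ in \cite{FLRST}, which is why the proposition's constant is $C=-\prod_{i=1}^{n-1}g(\psi;\alpha_i\ol{\beta}_i)$ with only $n-1$ factors. Second, your final paragraph invokes Theorem~\ref{T:compare} and the canonical normalization $\mc{H}^{\mr{can}}$ as the ``content'' of the proposition; this is off-target. Theorem~\ref{T:compare} plays no role here, and $\mc{H}^{\mr{can}}$ does not appear in the statement. The proposition is purely a bookkeeping identity between two explicit character sums, and all the work is the constant-matching you correctly flag as the main obstacle.
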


\subsection{}
\label{SS:charshv5}
In chapter  \cite[Chapter 5]{FLRST} it is shown how, given rational numbers $a_i,b_j,\l\in \QQ$, one can attach
 a collection of hypergeometric functions over finite residue fields $\mathbf{F}_{\fp}$ (varying in $\fp$)
$$\pPPq{n+1}{n}{\iota_\fp(a_1)&\iota_\fp(a_2)&\cdots&\iota_\fp(a_{n+1})}{&\iota_\fp(b_1)&\cdots &\iota_\fp(b_n)}{\l;q(\fp)},$$
where $\fp$  runs through all unramified prime ideals of $\QQ(\zeta_N)$ with $N$ being the least positive common denominator of all $a_i$ and $b_j$. The symbols $\iota_\fp(a_1)$ etc. represent characters of the finite field $\mathbf{F}_{\fp}$ with 
$q(\fp)$ elements. The Frobenius traces of the $\ell$-adic realization of the hypergeometric motives can be expressed in terms of these 
functions. This is done in  \cite[Theorem 1.1, Chapter 6.2]{FLRST}.

We will generalize this result. Recall the notation from section \ref{SS:cyclo2}
We let $ |U| $be the set of closed points of $U$ (= maximal ideals of $S_N$).
Let $x \in |U|$ be a closed point. This lies over a prime ideal $\mfr{p} \subset R_N$. We let
$\mathbf{F}_q = R_N/\mathfrak{p}$, where
$q = N\mathfrak{p} \equiv 1$ mod $N$.
Then 
$\kappa (x)=  \mathcal{O}_{U, x}/\mathfrak{m}_{x} = \mathbf{F}_{q^d} $
is a finite extension of $\mathbf{F}_q$, $d = \deg (x)$. To calculate the Frobenius 
trace at $x$ it is convenient to base-extend all our schemes, originally over 
$R_N$ to the finite field $\mathbf{F}_q = R_N/\mathfrak{p}$. Then each element $\lambda$ of the set
$U(\mathbf{F}_{q^d})$ is a vector $(\lambda _0, ..., \lambda _{r+1}) \in \mathbf{F}_{q^d}$, and 
we define $h(x, \lambda ) \in \mathbf{F}_{q^d}[x]$ by specializing $h(x)$ in section  \ref{SS:cyclo2} to 
these values of $\lambda$. The fiber
 \[
 u^{-1}(\lambda) = \mathbb{A}_x^{1}-\{ h(x, \lambda )=0 \}. 
 \]
 We define 
$\bar{\lambda} $ as the geometric point 
$\bar{\lambda}: \mr{Spec} (\bar{\mbf{F}}_q)\to  \mr{Spec} (\mbf{F}_{q^d})\overset{\lambda}{\longrightarrow}U$. 
That is, we regard each of our finite fields as contained in a fixed algebraic closure $\bar{\mbf{F}}_q$.

Choose a primitive character $\chi : \mu _N \to \bar{\QQ}_{\ell} ^{\times}$. For each finite field 
$\mathbf{F}_q$ such that $q \equiv 1$ mod $N$  we define the character
\[
\chi _q : \mathbf{F}_q ^{\times} \to \bar{\QQ}_{\ell} ^{\times}, \quad t \mapsto \chi (t^{(q-1)/N}), \quad
\]
which is meaningful since $t^{(q-1)/N} \in \mu_N$.


\begin{Theorem}\label{T:main}
Let $K_N = \QQ (\zeta _N)$. Assume that
$N$ does not divide any $i_j$ or $i_0+...+i_{r+1}$. 
Then there is a lisse $\bar{\QQ}_{\ell}$-adic sheaf $\mathcal{P}[\mathbf{i}/N, \chi]_{\ell}$ on $U[1/\ell]$ corresponding to a 
representation 
\[
\sigma_{\ell} : \pi _1 (U, \bar{\xi}) \to GL_{r+1}(\bar{\QQ}_\ell)
\]
whose Frobenius traces for $\lambda \in U(\mathbf{F}_{q^d})$ are given by 
\[
\mr{Tr}(\mr{Frob} _{\lambda}   \mid \mathcal{P}[\mathbf{i}/N, \chi]_{\ell, \bar{\lambda}})=
-\mathbb{P}[\mathbf{i}/N, \chi ; \lambda; q^{\deg (\lambda)}] :=  
-\sum _{\stackrel{x\in  \mathbf{F} _{q^d}} {h(x, \lambda) \neq 0}}  \chi _{q^{\deg (\lambda)}}(f_{\mathbf{i}}(x)).
\]
Here 
$\bar{\xi} = \mr{Spec}{\overline{K_N (\lambda)}}$ is a geometric generic point. This sheaf is punctually 
pure of weight 1. 


\end{Theorem}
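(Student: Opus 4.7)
The plan is to verify the three assertions of the theorem in turn: lisseness and rank $r+1$, the trace formula, and punctual purity.

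First, consider the sheaf $\mathcal{P}[\mathbf{i}/N,\chi]_\ell = R^1 u_! f_{\mathbf{i}}^\ast K(\chi)_\ell$. For a geometric point $\bar\lambda \to U$, proper base change for the morphism $u$ identifies the stalk at $\bar\lambda$ with $H^1_c(P^\circ_{U,\bar\lambda}, f_{\mathbf{i}}^\ast K(\chi)_\ell)$, which by the factorization through the $\mu_N$-covering in the Cartesian square of Section \ref{SS:cyclo2} equals $H^1_c(X^\circ_{\bar\lambda}, \bar{\QQ}_\ell)^{\chi}$. Theorem \ref{T:cohcyclo} (whose proof, as noted there, goes through verbatim for $\ell$-adic cohomology in characteristic prime to $N\ell$, since all our local systems are tame) gives the vanishing $R^0 u_! f_{\mathbf{i}}^\ast K(\chi)_\ell = R^2 u_! f_{\mathbf{i}}^\ast K(\chi)_\ell = 0$ and the constancy of the rank $r+1$ on every geometric fiber. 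By the Deligne–Laumon constructibility theorem plus constancy of the fiber rank, $R^1 u_! f_{\mathbf{i}}^\ast K(\chi)_\ell$ is lisse of rank $r+1$ on $U[1/\ell]$; this defines the representation $\sigma_\ell$ of $\pi_1(U,\bar\xi)$.

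Second, for the Frobenius traces, let $\lambda \in U(\mathbf{F}_{q^d})$ and apply the Grothendieck–Lefschetz trace formula to the fiber $P^\circ_{U,\lambda} = \mathbb{A}^1_{\mathbf{F}_{q^d}} \setminus \{h(x,\lambda)=0\}$ with coefficients in the lisse sheaf $f_{\mathbf{i}}^\ast K(\chi)_\ell$. Because the only surviving cohomology is in degree $1$, the formula reads
\[
-\mr{Tr}\bigl(\mr{Frob}_\lambda \mid \mathcal{P}[\mathbf{i}/N,\chi]_{\ell,\bar\lambda}\bigr)
= \sum_{\substack{x \in \mathbf{F}_{q^d} \\ h(x,\lambda)\neq 0}} \mr{Tr}\bigl(\mr{Frob}_x \mid f_{\mathbf{i}}^\ast K(\chi)_\ell\bigr).
\]
By the defining property of the Kummer sheaf $K(\chi)_\ell$, the Frobenius trace at $x$ equals $\chi_{q^{\deg(\lambda)}}(f_{\mathbf{i}}(x))$, which yields exactly $-\mathbb{P}[\mathbf{i}/N,\chi;\lambda;q^{\deg(\lambda)}]$ on the right-hand side.

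Third, for purity: the comparison of cohomology sequences in \ref{SS:cyclo1} shows, under our hypothesis that $N$ divides none of the $i_j$ nor $i_0+\cdots+i_{r+1}$, that for primitive $\chi$ the weight-zero contribution from the branch locus $T$ vanishes, so $H^1_c(X^\circ_{\bar\lambda},\bar{\QQ}_\ell)^\chi = H^1(X_{\bar\lambda},\bar{\QQ}_\ell)^\chi$, where $X_{\bar\lambda}$ is the smooth projective cycloelliptic curve. Since $X_{\bar\lambda}/\mathbf{F}_{q^d}$ is smooth and proper of dimension one, Deligne's main theorem (Weil II) gives that $H^1(X_{\bar\lambda},\bar{\QQ}_\ell)$ is pure of weight $1$, and purity is inherited by the $\chi$-isotypic direct summand under the $\mu_N$-action, which is defined over $\mathbf{F}_q$ since $q\equiv 1 \pmod{N}$. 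Hence $\mathcal{P}[\mathbf{i}/N,\chi]_\ell$ is punctually pure of weight $1$.

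The main subtlety is the weight computation: one must invoke the vanishing argument of Theorem \ref{T:cohcyclo} for primitive $\chi$ to eliminate the weight-zero piece of $H^1_c$ coming from the missing punctures, after which purity is immediate from Weil II on the smooth projective model.
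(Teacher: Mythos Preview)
Your argument is correct and follows essentially the same route as the paper's proof: vanishing of $R^i u_!$ for $i\neq 1$ via Theorem~\ref{T:cohcyclo}, the Grothendieck--Lefschetz trace formula on the fiber to obtain the character sum, and purity from the identification $H^1_c(X^\circ_{\bar\lambda},\bar\QQ_\ell)^\chi = H^1(X_{\bar\lambda},\bar\QQ_\ell)^\chi$ with the cohomology of the smooth projective model. Your write-up is in fact a bit more explicit than the paper's about the technical underpinnings (proper base change, Deligne--Laumon for lisseness, Weil~II for purity), but the strategy is identical.
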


\begin{proof}

We calculate the Frobenius trace
$\mr{Frob}_{\lambda}$ in the geometric fiber at $\bar{\lambda}$ in the $\ell$-adic realization of the motive
\[
\mathcal{P}[\mathbf{i}/N, \chi]:= 
Ru _{!} f_{\mbf{i}} ^* K(\chi).
\]
By the Grothendieck-Lefschetz
formula, the trace $\mr{Frob}_{\lambda}$ on $R u _{!}\,f _{\mbf{i}}^{\ast} K(\chi) _{\ell}$, i.e., 
the alternating sum
\[
\sum _{i = 0} ^2 (-1)^i \mr{Tr}(\mr{Frob}_{\lambda} \mid   R^i u _{!}\,f _{\mbf{i}}^{\ast} K(\chi) _{\ell, \bar{\lambda}} ) = 
\sum _{i = 0} ^2 (-1)^i \mr{Tr}(\mr{Frob}_{\lambda} \mid   H^i  _c( u ^{-1}(\bar{\lambda}), f _{\mbf{i}}^{\ast} K(\chi) _{\ell} )),
\]
is the sum
\[
\sum _{x \in u ^{-1}(\lambda)(\mbf{F}_{q^d})}  \mr{Tr}(\mr{Frob}_{x}  \mid  f _{\mbf{i}}^{\ast} K(\chi) _{\ell, \bar{x}}  ). 
\]
The local traces are $\chi _{q^{d}}(f_{\mbf{i}}(x))$, where $d = \deg(\lambda)$.  On the other hand we have shown that 
we have $ H^i  _c = 0$ for $i \neq 1$, and that the dimension of $ H^1  _c$ is $r+1$. 
Therefore the local system $R^1 u _{!}\,f _{\mbf{i}}^{\ast} K(\chi) _{\ell}$ gives us a representation
$\sigma_{\ell} : \pi _1 (U, \bar{\xi}) \to GL_{r+1}(\bar{\QQ}_\ell)$
and  
\[
-\mr{Tr}(\mr{Frob}_{\lambda} \mid   R^1 u _{!}\,f _{\mbf{i}}^{\ast} K(\chi) _{\ell, \bar{\lambda}} ) = 
\sum _{\stackrel{x\in  \mathbf{F} _{q^d}} {h(x, \lambda) \neq 0}}  \chi _{q^{\deg (\lambda)}}(f_{\mathbf{i}}(x))
\]
as claimed. That it is punctually pure of weight 1 follows from the fact that 
\[
 H^1  _c( u ^{-1}(\bar{\lambda}), f _{\mbf{i}}^{\ast} K(\chi) _{\ell} )=
H^1 _c (X ^{\circ}_{\bar{\lambda}}, \bar{\QQ}_{\ell}) ^{\chi} = H^1  (X _{\bar{\lambda}}, \bar{\QQ}_{\ell}) ^{\chi}
\]
for any primitive character. On the right-hand side is the cohomology of a projective, nonsingular curve, so this is pure of weight 1.

\end{proof}

It is worth noting that the classes of Frobenius elements of the closed points in $U$ are dense in 
$\pi _1 (U, \bar{\xi})$. Because the representation on $H^1$ of the smooth projective curve preserves the symplectic cup-product 
up to similitude, the representation preserves a Hermitian form up to similitude. For instance, when $N=3$,
$r=2$, $i_0=...= i_3=1$, we get the Picard family of curves. The monodromy of this family is in the group 
$\mr{SU}(2, 1)$.  See section \ref{S:picard}.

\section{Rigid local systems}
\label{S:rigid}

\subsection{\rm{}}
\label{SS:rigid1}

We give a brief survey of the main results of \cite{Katz96}. We consider local systems 
$\mc{F}$ on an open subset $U = \A ^1 - S = \PP^1 - (S \cup \infty)$ where $S$ is a finite set of points. This 
is studied in two contexts: 
\begin{itemize}
\item[1.] The $\mc{D}$-module setting. Then $\mc{F} = {\sf  V} $ is a local system of
$\C$-vector spaces on $U^{\text{an}}$ which is the solution sheaf to a (integrable) algebraic 
differential equation
\[
\nabla : \mc{V} \to \Omega^ 1 _{U/\C} \otimes _{\mc{O}_X} \mc{V}
\]
with regular singular points at $S$. This is equivalent to the monodromy representation
($n  =  \#S$)
\[
\rho : \pi _1 (U^{\text{an}},  x) \simeq
\langle 
\gamma _1, ..., \gamma _n , \gamma _{\infty} \mid \gamma _1 ...,\gamma _n \gamma _{\infty} = 1\rangle
\to  \mr{GL} ( {\sf V}_x)  \sim \mr{GL}_n(\C). 
\]
\item[2.] The $\ell$-adic setting.  Then $\mc{F} $ is a lisse $\bar{\QQ}_{\ell}$-sheaf on $U$. Here 
the ground field $k$ is algebraically closed. The sheaf  $\mc{F} $ is equivalent to a continuous representation
\[
\rho : \pi _1 (U,  x) \to  \mr{GL} ( \mc{ F}_x)  \sim \mr{GL}_n(  \bar{\QQ}_{\ell}   )
\]
where the $\pi _1$ refers to the profinite fundamental group in a geometric point $x$.
\end{itemize}
Let  $X$ be a projective smooth connected curve over $k$ and $x \in X$ is a point. By a {\it disk} at $x$ we mean either a subset 
$D_{(x)}\subset X $ containing $x$ and  homeomorphic to  a disk in the complex plane (in case 1 above), or the strict henselization 
$D_{(x)}\:= \mr{Spec} (\mc{O} ^h _{X, x})$ of the 
local ring at $x$ (case 2). In both cases, we let $D^{*}_{(x)}$ be the punctured disk, i.e., 
$D_{(x)} - \{ x\}$. Given a local system $\mc{F}$ on $X- S$ where $S = \{s_1, ..., s_m \}$ is a finite set of points, 
we get by restriction local systems $\mc{F} {(s)}$ on $D^* _{(s)}$, $s \in S$.  We regard these 
$\mc{F} {(s)}$ up to isomorphism. In case 1, the datum $\mc{F} {(s)}$  is equivalent 
to a representation of $\pi _1 (D^* _{(s)} ) \sim \ZZ$, hence to a matrix $T_s$, well-defined up to conjugacy.
In case 2 we obtain a representation of the inertia group 
\[
\rho (s) : I (s) \to \mr{GL}_n(  \bar{\QQ}_{\ell}   )
\]
well defined up to conjugation. Especially important are those representations that 
factor through the tame inertia $I(s) \to I(s) ^{\mr{tame}} = 
\widehat{\ZZ}(1)_{\mr {not\ } p} = \varprojlim _N\mu _N(\overline{k (s)})$. 

\begin{definition}
We say that a local system $\mc{F}$ on $U = X - S$ is rigid if given any other 
local system  $\mc{G}$ on $U = X - S$ such that for all $s \in S$ there are isomorphisms
$\mc{F} {(s)} \simeq \mc{G} {(s)}$ on $D^* _{(s)}$, $\mc{F}$ is isomorphic to 
$\mc{G}$ as local systems on $U$.
\end{definition}
We only consider this notion in the case of genus zero, i.e., $X = \PP^1$. 
In the analytic case this has the concrete interpretation as follows. The local system 
$\mc{F} $ is equivalent to giving complex matrices $M_1, ..., M_m$ such that 
$M_1 ...M_m = 1$, and similarly $\mc{G} $ is equivalent to giving complex matrices $N_1, ..., N_m$ such that 
$N_1 ...N_m = 1$. That they are locally isomorphic at each $s$ means that there are invertible matrices
$A_i$ such that $N_i = A_i M_i A_i ^{-1}$ for $i=1, ..., m$. Rigidity means that there is a single invertible matrix
$B$ such that $N_i = B M_i B^{-1}$  for $i=1, ..., m$.

\subsection{\rm{}}
\label{SS:rigid2}
A related notion of cohomological rigidity is introduced. Let $j : U \to \A^1$ be the inclusion 
of a nonempty open subset (schemes over $k= \bar{k}$). Let $h : \A ^1\to  \PP^1$ be the inclusion. Let 
$\mc{F}$ be an irreducible lisse $\qlbar$-sheaf on $U$. Then $K = j_{\ast}\mc{F}[1]$, as an element of 
$D^b _c (\A^1, \qlbar)$, is an irreducible perverse and nonpunctual sheaf. For such a sheaf 
we have the index of rigidity
\[
\mr{rig}(\mc{F}) : = \chi (\PP^1, h_{\ast}j_{\ast}\underline{\mr{End}}(\mc{F})). 
\]
We say that such a sheaf is cohomologically rigid if $\mr{rig}(\mc{F}) =2$.
\begin{theorem}
\label{T:rigid1}
 \cite[Thm. 5.0.2]{Katz96}.
Let $\mc{F}$ be a cohomologically rigid local system on an open set $U$ as above.
If $\mc{G}$ is another  lisse $\qlbar$-sheaf on $U$ which is locally isomorphic 
to $\mc{F}$ at all points $s$ of $\PP^1 - U$ in the sense that the representations 
of inertia $I(s) $ given by $\mc{F}(s)$ and $\mc{G}(s)$ are isomorphic, we have an isomorphism 
of lisse $\qlbar$-sheaves on $U$, $\mc{F} \cong \mc{G}$.
\end{theorem}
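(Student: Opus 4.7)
The plan is to produce a non-zero morphism between $\mc{F}$ and $\mc{G}$ by computing the Euler characteristic of $\mc{H} := \underline{\mr{Hom}}(\mc{F},\mc{G})$ on $\PP^1$, and then upgrade it to an isomorphism using the irreducibility of $\mc{F}$ and a rank comparison.

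First I would apply the Euler--Poincar\'e formula (Grothendieck--Ogg--Shafarevich) to the constructible sheaf $h_{\ast}j_{\ast}\mc{H}$ on $\PP^1$. This Euler characteristic depends only on the generic rank of $\mc{H}$, on the dimensions of the stalks (equivalently, the dimensions of inertia invariants) at the points $s\in\PP^1\setminus U$, and on Swan conductors at the wildly ramified points. By hypothesis the inertia representations $\mc{F}(s)$ and $\mc{G}(s)$ are isomorphic for every $s$, whence the inertia representations $\mc{H}(s)\cong\mr{Hom}(\mc{F}(s),\mc{G}(s))$ and $\underline{\mr{End}}(\mc{F})(s)\cong\mr{End}(\mc{F}(s))$ are isomorphic as well; the generic ranks also agree, each being the square of the common rank $n$ of $\mc{F}$ and $\mc{G}$. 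Consequently
\[
\chi(\PP^1, h_{\ast}j_{\ast}\mc{H})=\chi(\PP^1, h_{\ast}j_{\ast}\underline{\mr{End}}(\mc{F}))=\mr{rig}(\mc{F})=2.
\]

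Next, writing $\chi=h^0-h^1+h^2=2$, I would invoke Poincar\'e/Verdier duality on $\PP^1$ to identify $H^2(\PP^1,h_{\ast}j_{\ast}\mc{H})$ with (a Tate twist of) the dual of $H^0(\PP^1, h_{\ast}j_{\ast}\mc{H}^{\vee})=\mr{Hom}(\mc{G},\mc{F})$, while $H^0(\PP^1,h_{\ast}j_{\ast}\mc{H})=\mr{Hom}(\mc{F},\mc{G})$. Since $h^1\ge 0$, this forces $\dim\mr{Hom}(\mc{F},\mc{G})+\dim\mr{Hom}(\mc{G},\mc{F})\ge 2$, so at least one of these Hom spaces contains a non-zero morphism. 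Given a non-zero map $\varphi:\mc{F}\to\mc{G}$ (the other case is symmetric after switching roles), irreducibility of $\mc{F}$ (built into the hypothesis of cohomological rigidity) forces $\ker\varphi=0$; moreover $\mc{F}$ and $\mc{G}$ have the same generic rank $n$ because their local inertia representations are isomorphic, so an injection of lisse $\qlbar$-sheaves of the same rank on a connected base is automatically an isomorphism, yielding $\mc{F}\cong\mc{G}$.

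The main obstacle is the first step: one must verify carefully that the local stalk dimensions and Swan conductors of $\mc{H}$ really match those of $\underline{\mr{End}}(\mc{F})$ at each $s\in\PP^1\setminus U$. This is formally clear from the assumed local inertia isomorphisms, but the book-keeping is the technical heart of the argument, and the only place where the word ``cohomological'' in cohomological rigidity is really exploited; everything downstream rests on the equality $\chi=2$.
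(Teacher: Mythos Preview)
The paper does not supply its own proof of this theorem; it is simply quoted from \cite[Thm.~5.0.2]{Katz96}. Your sketch is correct and is essentially Katz's argument: compute $\chi(\PP^1,h_{\ast}j_{\ast}\underline{\mr{Hom}}(\mc{F},\mc{G}))$ via the Euler--Poincar\'e formula, observe that every local invariant (generic rank, dimension of inertia invariants, Swan conductor) at each $s\in\PP^1\setminus U$ depends only on the isomorphism class of the local inertia representations and hence agrees with the corresponding invariant for $\underline{\mr{End}}(\mc{F})$, conclude $\chi=2$, and then use duality plus irreducibility of $\mc{F}$ to produce an isomorphism.

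One minor remark: in the case where the non-zero map goes $\psi:\mc{G}\to\mc{F}$, the argument is not literally ``symmetric after switching roles,'' since $\mc{G}$ is not assumed irreducible. What actually works is that the image of $\psi$ is a non-zero subobject of the irreducible $\mc{F}$, hence all of $\mc{F}$; surjectivity together with equality of ranks then gives the isomorphism. This is implicit in what you wrote but worth stating precisely.
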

Note that this theorem requires an 
algebraically closed ground field $k$. If one starts from $\mc{F} =\mc{F}_0\otimes _k \bar{k}$, 
and  $\mc{G} =\mc{G}_0\otimes _k \bar{k}$ for lisse $\mc{F}_0, \mc{G}_0$ defined over a 
nonalgebraically closed field $k$, then the rigidity conclusion is that 
$\mc{G}_0 \cong \mc{F}_0\otimes \Phi$ for a $\qlbar$-valued character 
$\Phi $ of $\mr{Gal}(\bar{k}/k)$ (assuming $\mc{F}$ and $\mc{G}$ are irreducible).

Define the category $\mc{T}_{\ell}$ as the full subcategory of constructible  $\qlbar$-sheaves
$\mc{F}$
 on $\A^1$, such that 
\begin{itemize}
\item[1.] $\mc{F}$ is middle extension: there exists a dense open set 
$j : U \to \A^1$ such that $j^* \mc{F}$ is lisse and irreducible and
$j_{\ast}j^* \mc{F}\cong \mc{F}$. 
\item[2.] $\mc{F}$ is tame: $j^* \mc{F}$ is tamely ramified at each point
of $\PP^1 -U$.
\item[3.] $\mc{F}$ has at least two finite singularities: there are at  least two 
distinct points of $\A^1$ where $\mc{F}$ fails to be lisse.
\end{itemize}

One of the main results of \cite{Katz96} is a classification of objects 
of $\mc{T}_{\ell}$ which are are
\begin{itemize}
\item[1.]
 lisse on $\A^1 - \{ \alpha _1, ..., \alpha _n\}$ ($n \ge 2$; $\alpha _1, ...,\alpha _n$ 
 fixed geometric points). 
\item[2.] cohomologically rigid, and
\item[3.] all eigenvalues of all local monodromies are $N$th roots of unity. Here
$N$ is an integer invertible in $k$.
\end{itemize}

\subsection{\rm{}}
\label{SS:rigid3}
The main results are Theorems 5.2.1, 5.5.4, and 8.4.1 of \cite{Katz96}. First it is shown that 
every such  object  is obtained, starting from objects of (generic) rank one 
in  $\mc{T}_{\ell}$,  by repeated iteration of two constructions 
\begin{itemize}
\item[1.] $\mc{F} \mapsto \rm{MT} _{\mc{L}}(\mc{F})$ (middle tensor product), where
$\mc{L}$ is a rank one object in  $\mc{T}_{\ell}$.
\item[2.] $\mc{F} \mapsto \rm{MC} _{\chi}(\mc{F})$ (middle convolution), where
$\chi : \pi _1 ^{\mr{tame}} (\mathbb{G}_m/k)\to \mu _N (\qlbar)$ is a nontrivial character, 
with corresponding Kummer sheaf $\mc{L}_{\chi}$.
\end{itemize}
The effect of these operations on the local monodromies is determined in 3.3.6 and 
3.3.7 of loc. cit. The rank one objects are tensor products of translated Kummer sheaves
\[
\bigotimes _i \mc{L}_{\chi _i (x - \alpha _i)}.
\]
In Theorem 8.4.1 these rigid local systems are given a motivic interpretation. The motives are eigenspaces
of the cohomology of certain hypersurfaces. Define
\[
R_{N, \ell}:= \ZZ[\zeta _N, 1/N \ell]
\]
and
\[
S_{N, n,\ell} := R_{N, \ell}[T_1, ..., T_n][1/\Delta], 
\quad \Delta = \prod_{ i \neq j}(T_i - T_j).
\]
One fixes an embedding $R_{N, \ell}\to \qlbar$ (equivalently, a primitive 
$N$th root of unity in $\qlbar$). For each $r \ge 0$ define
\[
\A(n, r+1)_{R_{N, \ell}}= 
\mr{Spec} (R_{N, \ell}[T_1, ..., T_n, X_1, ..., X_{r+1}][1/\Delta_ {n, r}])
\]
where
\[
\Delta_ {n, r} = \prod_{ i \neq j}(T_i - T_j)
\prod_{a, j}(X_a -T_j)\prod_k (X_{k+1}- X_k)
\]
($i, j \in \{ 1, ..., n\}$, $a\in \{ 1, ..., r+1\}$, $k \in \{1, ..., r \}$). When $r =0$ the 
last factor is the empty product, interpreted as $1$. In $\mathbb{G}_m \times \A(n, r+1)_{R_{N, \ell}}$
consider the hypersurface $\mr{Hyp}(e, f)$ with equation
\[
Y^N = \left(\prod_{a, i} (X_a - T_i)^{e(a, i)} \right) 
\left(    \prod _{k = 1}^r    (X_{k+1}-X_k)^{f(k)} \right)
\]
where the integers $e(a, i)$ are arbitrary, and none of the integers
$f(k)$ is divisible by $N$. Let 
\[
\pi: \mr{Hyp}(e, f)\to (\A^1 - \{ T_1, ..., T_n\})_{S_{N, n,\ell}}
\]
be the map 
\[
(Y, T_1, ..., T_n, X_1, ..., X_{r+1}) \mapsto (T_1, ..., T_n, X_{r+1}).
\]
In other words, we regard $X_{r+1}$ as the coordinate on the target
$\A^1$, and we think of $\mr{Hyp}(e, f)$ as a family of hypersurfaces 
in $(Y, X_1, ..., X_r)$-space parametrized by $(T_1, ..., T_n, X_{r+1})$.
In this setup, the parameter $X_{r+1}= \lambda$ is distinguished; we 
get local systems on the $\lambda$-line minus the points 
with coordinates $\lambda = T_1$, ..., $\lambda = T_n$.

Fix one faithful character $\chi : \mu _N({R_{N, \ell}})\to \qlbar^{\times}$. 
These roots of unity act on $\mr{Hyp}(e, f)$ in the obvious way:
$Y \mapsto \zeta Y$. Katz proves that the sheaves $R^i \pi _{!} \qlbar$ on 
$(\A^1 - \{ T_1, ..., T_n\})_{S_{N, n,\ell}}$ are lisse and tame.
 The eigenspace $(R^i \pi _{!} \qlbar) ^{\chi} $ is 
 nonvanishing
only when $i = r$, and mixed in integral weights in the interval 
$[0, r]$. The  weight $r$ quotient of that sheaf, denoted 
$\mc{H}_{=r}$, if nonzero, when restricted to every geometric
fiber of $(\A^1 - \{ T_1, ..., T_n\})_{S_{N, n,\ell}}$ over
$S_{N, n,\ell}$ is geometrically irreducible and cohomologically rigid, 
all of whose local monodromies have eigenvalues that are $N$th roots of unity. 

He shows 
also that every such rigid local system arises this way. Also he notes that because 
these rigid local systems belong to universal families on the open subset $V_{N, \ell}$ of affine space 
of dimension $n+1$, $\mr{Spec} (R_{N, \ell} [T_1, ...,  T_{n+1}] )$, where
\[
\Delta = \prod _{i \neq j} (T_i -T_j) \neq 0,
\]
the representation of $\pi _1 (\A^1 - \{\alpha_1, ..., \alpha _n \}, \bar{x})$ afforded by any rigid local system 
in $\mc{T}_{\ell}$ extends to a representation of  $\pi _1 (V_{N, \ell}, \bar{x})$. Geometrically, this fundamental 
group is Artin's braid group on $n+1$ letters.

\section{Appell-Lauricella systems}
\label{S:AL}
\subsection{\rm{}}
\label{SS:AL1}
This is the case $r=1$ of the above constructions. We get a family of curves 
in the $(X_1, Y)$-plane 
\[
Y^N = \prod_{ i} (X_1 - T_i)^{e(1, i)} \prod_{ i} (X_2 - T_i)^{e(2, i)} 
     (X_2-X_1)^{f(1)} .
\]
The factor  $\prod_{ i} (X_2 - T_i)^{e(2, i)}$ comes from the base, 
i.e., $\mr{Spec}(S_{N, n,\ell})$ and its effect on 
$\mc{H}_{r = 1}$ is a twist by the Kummer sheaf

\[
\bigotimes _i\mc{L} _{\chi_{2, i} (X_2 - T_i)}.
\]
This can be omitted, and so we are considering the family of curves
in the $(X_1, Y)$-plane ($\lambda = X_2$)
\[
Y^N = \prod_{ i = 1}^n (X_1 - T_i)^{e_i} 
     (\lambda-X_1)^{f}.
\]
We let $(T_1 = \alpha_1, ..., T_n= \alpha _n)$ take on fixed values in an algebraically closed field, and
we get a one-parameter family of curves $C_{\lambda}$. The stalk of 
$\mc{H}_{r=1}$ at a geometric point $\lambda$ is then
\[
(H^1 _c (C_{\lambda}, \qlbar) ^{\chi}) _{r=1} = H^1 (\tilde{C}_{\lambda}, \qlbar) ^{\chi}.
\]
where $\tilde{C}_{\lambda}$ is the projective nonsingular model of $C_{\lambda}$. 
We can calculate this as follows. Consider the projection
$\rho: \tilde{C}_{\lambda} \to \PP^1 $ where the target has affine coordinate 
$X_1$. This is a Galois  $\mu_N $-covering over $U = \PP^1 - \{ \alpha_1, ..., \alpha _n, \lambda, \infty \}$. 
$\rho _{\ast}\qlbar$ is a constructible sheaf on $\PP^1$, lisse of rank $N$ on $U$. 
We have a decomposition into eigenspaces ($\chi$ is a primitive character
of $\mu_N$)
\[
\rho _{\ast}\qlbar = \bigoplus _{j = 0} ^{N}\rho _{\ast}\qlbar ^{\chi ^j}.
\]
Then $\rho _{\ast}\qlbar ^{\chi ^0} = \qlbar $, and for any 
$\phi \neq 1$, over the open set $U$, we get the Kummer sheaf
\[
\rho _{\ast}\qlbar ^{\phi}\mid U  = \mc{L} _{\phi (\prod_{ i = 1}^n (X_1 - \alpha_i)^{e_i}  (\lambda-X_1)^{f})}\mid U. 
\]
The behavior at the ramification points $\{ \alpha_1, ..., \alpha _n, \lambda, \infty \}$ depends on the nature
of the integers $e_1, ..., e_n, f$ modulo $N$. The simplest choice is to assume 
that each $e_i$, $f$ and $e_1 + ...+e_n +f $ are relatively prime to $N$. Then 
\[
\rho _{\ast}\qlbar ^{\phi} = j_{\ast}  \mc{L} _{\phi (\prod_{ i = 1}^n (X_1 - \alpha_i)^{e_i}  (\lambda-X_1)^{f})}
 \]
where $j : U \to \PP^1$ is the inclusion.
\subsection{\rm{}}
\label{SS:AL2}
Curves with equations $Y^N = \prod_{ i = 1}^n (X_1 - \alpha_i)^{e_i} (\lambda-X_1)^{f}$ are sometimes called cycloelliptic. 
It is more natural to think of them as depending simultaneously on the parameters
$\{\alpha _1, ..., \alpha_n, \lambda \}$. As such, they define local systems on the space of parameters $\{\alpha _1, ..., \alpha_n ,\lambda \}$ minus the hyperplanes where two of these coordinates agree. Deligne and Mostow \cite{DM} made an extensive study of the corresponding monodromy groups. Also, the structure of the $\mc{D}$-modules for these families is worked out 
by Holzapfel in part 2 of the book \cite{Holz1} (see also \cite{Holz2}). To our knowledge, the $\ell$-adic (or $p$-adic)
story of these is only partially available. Lei  Fu studies $\ell$-adic analogs 
of GKZ hypergeometric systems in  \cite{LF}. Katz's theory is for local systems in one variable, i.e., 
on the line. This explains the singling out of the distinguished parameter $\lambda$.

\section{Examples: Rigidity}
\label{S:exrig}

\subsection{\rm{}}
\label{SS:ex1} Recall that the group of automorphisms of 
$\PP^1$ that permute the set $0, 1, \infty$ has order 6 and is generated by 
$\mr{inv}(x) =  1/x$ and $g(x) = 1-x$. 

\begin{lemma}
\label{L:ex1} If $a=b$, 
\[
\mr{inv}^* \mc{H}^{\mr{can}} (\alpha _i; \beta _j) = \mc{H}^{\mr{can}} (\bar{\beta} _j; \bar{\alpha} _i) .
\]
\end{lemma}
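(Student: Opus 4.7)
The plan is to invoke Katz's rigidity theorem (Theorem \ref{T:rigid1}) together with a local monodromy comparison, then pin down the possible Galois twist by evaluating Frobenius traces at the fixed point $t=1$.

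Both $\mc{H}^{\mr{can}}(\alpha_i; \beta_j)$ and $\mc{H}^{\mr{can}}(\bar{\beta}_j; \bar{\alpha}_i)$ are Katz hypergeometric sheaves of type $(a,a)$: lisse of rank $a$ on $\mbf{G}_m - \{1\}$, tamely ramified at $\{0, 1, \infty\}$, geometrically irreducible, and cohomologically rigid. The involution $\mr{inv}: x \mapsto 1/x$ is an automorphism of $\PP^1$ fixing $1$ and exchanging $0$ with $\infty$; using the uniformizers $x$ at $0$ and $1/x$ at $\infty$, $\mr{inv}$ induces the identity on formal neighborhoods, so a positively-oriented small loop around $0$ pulls back to a positively-oriented small loop around $\infty$, and a small loop around $1$ is carried to itself. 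Consequently $\mr{inv}^* \mc{H}^{\mr{can}}(\alpha_i; \beta_j)$ is again lisse on $\mbf{G}_m - \{1\}$, tame, geometrically irreducible, and cohomologically rigid, with local monodromies obtained from those of $\mc{H}^{\mr{can}}(\alpha_i; \beta_j)$ by swapping the data at $0$ and $\infty$.

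Next I would carry out the local monodromy comparison. By Katz's explicit description (\cite[Ch.~8]{Katz90}), the local monodromy of $\mc{H}^{\mr{can}}(\alpha_i; \beta_j)$ has eigenvalues $\{\alpha_i\}$ at $0$, eigenvalues $\{\bar{\beta}_j\}$ at $\infty$ (using the uniformizer $1/x$), and a pseudoreflection at $1$ with nontrivial eigenvalue $(\prod\beta_j)/(\prod\alpha_i)$. After applying $\mr{inv}^*$ these become: eigenvalues $\{\bar{\beta}_j\}$ at $0$, $\{\alpha_i\}$ at $\infty$, and the same pseudoreflection at $1$. Since $\overline{\bar{\chi}} = \chi$ and $(\prod\bar{\alpha}_i)/(\prod\bar{\beta}_j) = (\prod\beta_j)/(\prod\alpha_i)$, this matches the local monodromy of $\mc{H}^{\mr{can}}(\bar{\beta}_j; \bar{\alpha}_i)$. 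Theorem \ref{T:rigid1} then produces a continuous character $\Phi$ of $\mr{Gal}(\bar{\FF}_q/\FF_q)$ such that
\[
\mr{inv}^* \mc{H}^{\mr{can}}(\alpha_i; \beta_j) \;\cong\; \mc{H}^{\mr{can}}(\bar{\beta}_j; \bar{\alpha}_i) \otimes \Phi.
\]

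To show that $\Phi$ is trivial I would compare Frobenius traces at the fixed point $t=1$ using Corollary \ref{C:charshv1}; matching the two sides reduces to the Jacobi sum identity
\[
J(\alpha, \beta/\alpha) \;=\; (\beta/\alpha)(-1)\, J(\bar{\beta}, \bar{\alpha}/\bar{\beta})
\]
and its multi-variable analogue, all of which follow from the elementary relation $J(\mu,\nu) = \mu(-1) J(\mu, \overline{\mu\nu})$ combined with $\chi(-1)^2 = 1$. A more conceptual alternative is to note that $\mc{H}^{\mr{can}}(\alpha_1,\ldots,\alpha_a;\beta_1,\ldots,\beta_a)[1]$ is the iterated $!$-convolution of the rank-one sheaves $\mc{H}^{\mr{can}}(\alpha_i;\beta_i)[1]$, and that $\mr{inv}^*$ commutes with multiplicative convolution on $\mbf{G}_m$ (being pullback along a group automorphism); this reduces the Lemma to the rank-one case, where it is a direct check that $\mr{inv}^*\mc{L}_{\alpha(x)(\beta/\alpha)(1-x)} \cong \mc{L}_{\bar{\beta}(x)(\beta/\alpha)(1-x)} \otimes (\beta/\alpha)(-1)^{\mr{deg}}$, and the offending constant $(\beta/\alpha)(-1)$ is absorbed precisely by the Jacobi sum normalization built into the canonical twist. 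The main obstacle is bookkeeping --- tracking Katz's sign and orientation conventions at $\infty$ and pinning down the exact form of the Jacobi sum identity forced by the canonical normalization; the conceptual content is immediate from rigidity plus the observation that $\mr{inv}$ swaps $0$ and $\infty$.
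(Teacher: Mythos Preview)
Your second, ``more conceptual'' alternative is exactly the paper's proof: use that $\mr{inv}$ is a group automorphism of $\mbf{G}_m$ and hence commutes with $!$-convolution to reduce to $a=1$, then settle the rank-one case by rigidity (same local monodromies at $0,1,\infty$) together with a Frobenius-trace comparison via equation \eqref{E:charshv1} and the elementary Jacobi-sum identity $J(\mu,\nu)=\nu(-1)J(\bar\mu\bar\nu,\nu)$. So you have found the paper's argument, only you presented it as secondary.

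Your primary approach (rigidity directly for general $a$, then pinning down $\Phi$ by Frobenius at $t=1$) is viable in principle but has a gap as written. Corollary \ref{C:charshv1} is stated and proved only for $a=b=2$; for general $a$ the stalk of $\mc{H}^{\mr{can}}(\alpha_i;\beta_j)$ at $t=1$ is the $(a-1)$-dimensional space of $I(1)$-invariants of a pseudoreflection, so the Frobenius trace there is not a single Jacobi-sum ratio, and the ``multi-variable analogue'' you invoke would itself require proof. The convolution reduction is precisely what buys you the right to work with equation \eqref{E:charshv1} instead, where everything is one-dimensional and explicit. In short: promote your alternative to the main argument and drop the appeal to Corollary \ref{C:charshv1} for general $a$.
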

\begin{proof}
We have 
\[
\mc{H}^{\mr{can}} (\alpha _i; \beta _j)[1]  = \mc{H}^{\mr{can}} (\alpha _1; \beta _1) [1]
*_{!}...*_{!}
\mc{H}^{\mr{can}} (\alpha _a; \beta _a)[1]. 
\]
Since $\mr{inv}$ is an automorphism of the algebraic group $\mbf{G}_m$, it commutes 
with this convolution, so it suffices to prove the lemma when $a=1$. The equality 
clearly holds over $U = \mbf{G}_m - \{ 1 \}$ geometrically (i.e., over an algebraically closed field)
since both sides have the same monodromy at $0, 1, \infty$. By rigidity, we get
\[
\mr{inv}^* \mc{H}^{\mr{can}} (\alpha _1 \beta _1) = \mc{H}^{\mr{can}} (\bar{\beta} _1; \bar{\alpha} _1) 
\otimes  C^{\mr{deg}}
\]
for a constant $C \in \qlbar$. We see that $C=1$ by comparing the trace of Frobenius of both 
sides and using the elementary identity $J(\mu, \nu ) = \nu (-1)J(\bar{\mu}\bar{\nu}, \nu)$
(see equation \eqref{E:charshv1} in section  \eqref{SS:charshv4}). Since the equality 
holds over $U$ it holds over all $\PP^1$ by applying $j_{\ast}$ for $j: U \to \PP^1$.

\end{proof}

The situation for $g(x) = 1-x$ is more complicated. This is not an automorphism 
of the group $\mbf{G}_m$, and it does not commute with convolution. 

\begin{lemma}
\label{L:2}  Let $g(x) = 1-x$. Then over  the open set
$U = \mbf{G}_m - \{1\}$ there is an isomorphism
\[
g^*\mc{H}^{\mr{can}} ( \alpha _1, \alpha _2 ;  \beta _1, \beta _2) \otimes
\mc{L}_{\bar{\alpha} _2 (x-1)} \cong 
\mc{H}^{\mr{can}} (\beta _1 \beta _2 \bar{\alpha} _1\bar{ \alpha} _2 , 1;  \beta _1\bar{\alpha} _2, \beta _2\bar{\alpha} _2) 
\otimes C^{\mr{deg}}
\]
for an explicitly computable $C \in \qlbar$.
\end{lemma}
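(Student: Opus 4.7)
The strategy mirrors the proof of Lemma \ref{L:ex1}: establish isomorphism geometrically via Katz's rigidity theorem (Theorem \ref{T:rigid1}), then pin down the Galois-character discrepancy $C^{\mr{deg}}$ by a Frobenius trace comparison. As a preliminary, I would verify that both sides are geometrically irreducible, rank-$2$, tame, cohomologically rigid lisse $\qlbar$-sheaves on $U = \mbf{G}_m - \{1\}$. For the right-hand side this is known from \cite{Katz96}; for the left-hand side, cohomological rigidity and geometric irreducibility are preserved by pullback along the automorphism $g$ of $\PP^1$ and by twist with the Kummer sheaf $\mc{L}_{\bar\alpha_2(x-1)}$.

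The heart of the proof is the local monodromy matching at $0,1,\infty$. Since $g(x)=1-x$ is an involution swapping $0 \leftrightarrow 1$ and fixing $\infty$, the monodromies of $g^*\mc{H}^{\mr{can}}(\alpha_1,\alpha_2;\beta_1,\beta_2)$ at $0,1,\infty$ are, respectively, a pseudoreflection of determinant $\beta_1\beta_2/(\alpha_1\alpha_2)$ (coming from the original pseudoreflection at $1$), semisimple with eigenvalues $\alpha_1,\alpha_2$ (from $0$), and semisimple with eigenvalues $\beta_1,\beta_2$ (from $\infty$, with the appropriate orientation convention). The Kummer sheaf $\mc{L}_{\bar\alpha_2(x-1)}$ is lisse at $0$ and contributes monodromy $\bar\alpha_2$ at $1$ and its inverse $\alpha_2$ at $\infty$. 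Tensoring and simplifying produces the triple matching the hypergeometric sheaf $\mc{H}^{\mr{can}}(\beta_1\beta_2\bar\alpha_1\bar\alpha_2,1;\beta_1\bar\alpha_2,\beta_2\bar\alpha_2)$ on the right: at $0$ a pseudoreflection of determinant $\beta_1\beta_2\bar\alpha_1\bar\alpha_2$, at $1$ a pseudoreflection (the eigenvalue $\alpha_2\cdot\bar\alpha_2=1$ appearing on the left is precisely what forces this structure), and at $\infty$ the expected eigenvalues.

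Having matched local structure, Katz's rigidity (Theorem \ref{T:rigid1}, combined with descent over the non-algebraically-closed base as in the discussion following that theorem) yields an isomorphism between the two sides up to twisting by a continuous character $\Phi=C^{\mr{deg}}$ of $\mr{Gal}(\bar{k}/k)$. To determine $C$, I would compute Frobenius traces of both sides at a convenient $\FF_q$-rational point $t\in U(\FF_q)$, using Lemma \ref{L:charshv1} on the right, expanding the twist $\mc{L}_{\bar\alpha_2(t-1)}$ on the left via $\bar\alpha_2(t-1)$, and reducing the resulting Gauss/Jacobi sum ratio using $J(\mu,\nu)g(\psi;\mu\nu)=g(\psi;\mu)g(\psi;\nu)$ and the identity $J(\mu,\nu)=\nu(-1)J(\bar\mu\bar\nu,\nu)$ already used in Lemma \ref{L:ex1}.

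The main obstacle is the bookkeeping in the local-monodromy step, especially at $\infty$: the orientation of a small loop around $\infty$ under $g$, combined with the sign conventions governing the eigenvalues of $\mc{H}^{\mr{can}}$ at $\infty$ versus the Kummer monodromy at $\infty$, must be tracked carefully so that the twist parameters of the right-hand hypergeometric sheaf emerge exactly as stated. The final Gauss-sum simplification to an explicit $C$ is routine but combinatorially dense, and I would organize it by reducing first to the $a=b=1$ case where the canonical twist formula \eqref{E:charshv1} gives a clean Jacobi-sum answer, then propagating through the $!$-convolution formula $\mc{H}^{\mr{can}}(\alpha_1,\alpha_2;\beta_1,\beta_2)[1]=\mc{H}^{\mr{can}}(\alpha_1;\beta_1)[1]\ast_!\mc{H}^{\mr{can}}(\alpha_2;\beta_2)[1]$.
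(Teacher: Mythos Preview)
Your proposal is correct and follows essentially the same route as the paper: match local monodromies at $0,1,\infty$, invoke rigidity to get the isomorphism up to a Galois twist $C^{\mr{deg}}$, and then compare Frobenius traces to pin down $C$. The paper's proof is slightly more streamlined in two places. First, it simply assumes semisimple monodromy and writes the three diagonal matrices directly, rather than carrying the pseudoreflection language (your more careful treatment is not wrong, just heavier). Second, for the constant $C$ the paper evaluates both sides at the specific point $t=1/2$ using Lemma~\ref{L:charshv1} and then matches the two character sums by the elementary substitutions $t_1\mapsto -t_1$ followed by $t_1\mapsto u_1-1$, obtaining $C=A\beta_2(-1)/B$ with $A,B$ the Jacobi-sum constants from that lemma; your proposed organization via the $!$-convolution decomposition is an unnecessary detour here, since (as the paper notes) $g$ does not commute with convolution, and the direct trace comparison at a single point already does the job.
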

\begin{proof}
This argument assumes semisimple monodromy. 
Both sides have monodromy 
\[
\begin{pmatrix}
 \beta _1 \beta _2 \bar{\alpha} _1\bar{ \alpha} _2& 0\\
0 &1
\end{pmatrix}, \ \ \ 
\begin{pmatrix}
1 & 0\\
0 &\alpha _1 \bar{\alpha} _2 
\end{pmatrix}, \ \ \ 
\begin{pmatrix}
\bar{\beta} _1 \alpha _2 & 0\\
0 &\bar{\beta} _2 \alpha_2
\end{pmatrix}
\]
at $0, 1, \infty$ respectively. By rigidity, we have an isomorphism as above, with the constant 
$C$ to be computed. This is done by evaluating the trace of Frobenius of both sides at $1/2$. 
By lemma  \eqref{L:charshv1} the traces of Frobenius at $1/2$ for  
$g^*\mc{H}^{\mr{can}} ( \alpha _1, \alpha _2 ;  \beta _1, \beta _2) \otimes
\mc{L}_{\bar{\alpha} _2 (x-1)}$ and $\mc{H}^{\mr{can}} (\beta _1 \beta _2 \bar{\alpha} _1\bar{ \alpha} _2 , 1;  \beta _1\bar{\alpha} _2, \beta _2\bar{\alpha} _2) $ are respectively 
\begin{align*}
&A\alpha _2 (1/2) \bar{\alpha} _2 (-1/2) \sum _{ t_1} \lambda(t_1) \mu(1-t_1) 
\nu(1/2-t_1),\quad 
\lambda = \alpha _1 \beta _2 ^{-1}, \mu = \beta _1 \alpha _1 ^{-1},
\nu = \beta _2 \alpha _2 ^{-1}\\
&B  \sum _{ u_1} \mu(u_1) \lambda(1-u_1) 
\nu(1/2-u_1)
\end{align*}
where $A, B$ are computed constants involving Jacobi sums
(lemma  \eqref{L:charshv1}). Replacing $t_1 \mapsto -t_1$
in the first formula gives
\[
A\alpha_1 (-1)  \sum _{ t_1} \lambda(t_1) \mu(t_1+1) 
\nu(-1/2-t_1), 
\]
then replacing $t_1 \mapsto u_1 -1$ gives $C = A\beta _2  (-1) /B$  times the second sum.
\end{proof}

 If $\alpha _2\neq 1$, we cannot assert an isomorphism over all
of $ \mbf{G}_m $. The right-hand side has a one-dimensional stalk at $1$ whereas 
$\mc{L} _{\bar{\alpha}_2 (x-1)}$ has a zero-dimensional stalk there. 
The map $g$ exchanges the singular points $0, 1$, but we cannot 
use it to compute the Frobenius action on 
$\mc{H}^{\mr{can}} ( \alpha _1, \alpha _2 ;  \beta _1, \beta _2) _0$
from the action of Frobenius on 
 $\mc{H}^{\mr{can}} (\beta _1 \beta _2 \bar{\alpha} _1\bar{ \alpha} _2 , 1;  \beta _1\bar{\alpha} _1, \beta _2\bar{\alpha} _2)_1$. Recall that we view these sheaves as living on 
 $\PP^1$, extending via $j_{\ast}$ where $j : U \to \PP^1$.
 However, it will work if $\alpha _2 =1$. 
 
 \begin{corollary}
 \label{C:ex1} There is an isomorphism on all of $\PP^1$:
 \[
g^*\mc{H}^{\mr{can}} ( \alpha _1, 1 ;  \beta _1, \beta _2)  \cong 
\mc{H}^{\mr{can}} (\beta _1 \beta _2 \bar{\alpha} _1 , 1;  \beta _1, \beta _2) 
\otimes C^{\mr{deg}}
\]
for an explicitly computable $C \in \qlbar$.
 \end{corollary}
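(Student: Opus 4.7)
The plan is to derive this corollary as a direct specialization of Lemma \ref{L:2} to the case $\alpha_2 = 1$, combined with a verification that the isomorphism, a priori only valid on $U = \mbf{G}_m - \{1\}$, extends to all of $\PP^1$. First I would substitute $\alpha_2 = 1$ into the statement of Lemma \ref{L:2}. The character $\bar{\alpha}_2$ is then the trivial character, so the Kummer sheaf $\mc{L}_{\bar{\alpha}_2(x-1)}$ appearing on the left-hand side becomes the constant sheaf $\qlbar$ on $U$. Dropping this trivial twist, and replacing $\bar{\alpha}_2$ by $1$ throughout the right-hand side, yields the asserted isomorphism restricted to $U$, together with an explicit value of $C$ coming from the formula already computed in the proof of Lemma \ref{L:2}.

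Next I would extend the isomorphism from $U$ to $\PP^1$. By the convention recalled at the end of Section \ref{SS:charshv3}, both canonical hypergeometric sheaves are defined on $\PP^1$ as $j_{\ast}$ of their restrictions to $U$, where $j : U \to \PP^1$ is the open inclusion. The involution $g(x) = 1 - x$ is an automorphism of $\PP^1$ that preserves the set $\{0, 1, \infty\}$, so it restricts to an automorphism $g|_U$ of $U$ and satisfies $g \circ j = j \circ g|_U$. Since $g$ is an isomorphism, the base-change identity $g^{\ast} j_{\ast} \cong j_{\ast} (g|_U)^{\ast}$ is automatic. Applying $j_{\ast}$ to the isomorphism on $U$ therefore produces the asserted isomorphism on $\PP^1$, with the same constant $C$.

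The obstacle that the preceding remark flags, and which the hypothesis $\alpha_2 = 1$ is designed to remove, is a mismatch of stalks at the point $1$. For general $\alpha_2 \neq 1$, the Kummer sheaf $\mc{L}_{\bar{\alpha}_2(x-1)}$ has zero-dimensional stalk at $1$ (the sheaf is nontrivially ramified there), so the left-hand side of Lemma \ref{L:2} vanishes at $1$, while the right-hand side, as a $j_{\ast}$ extension of a hypergeometric sheaf whose local monodromy at $1$ is a pseudoreflection, has a one-dimensional stalk there. This prevents the $U$-isomorphism from extending. When $\alpha_2 = 1$ the twist disappears, both sides are honest middle extensions from $U$, and the rank-one stalks at $1$ automatically agree, making the extension by $j_{\ast}$ unobstructed. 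The only residual computation is to confirm the value of $C$, which can be done either by specializing the expression for $C$ in Lemma \ref{L:2} at $\alpha_2 = 1$, or directly by comparing Frobenius traces of both sides at the fixed point $t = 1/2$ of $g$.
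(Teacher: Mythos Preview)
Your proposal is correct and follows essentially the same approach as the paper: specialize Lemma~\ref{L:2} to $\alpha_2 = 1$ so that the Kummer twist becomes trivial, then push the resulting isomorphism on $U$ forward by $j_{\ast}$, using that $g$ is an automorphism of $\PP^1$ carrying $U$ to itself. Your explanation of the obstruction for general $\alpha_2$ via the stalk mismatch at $1$ is the content of the paragraph preceding the corollary; the paper's proof phrases the same point as ``$j_{\ast}$ does not commute with tensor product,'' but the two formulations are equivalent.
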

\begin{proof}
The map $g$ is an automorphism of $U$ and it extends to an automorphism 
of $\PP^1$. Over $U$ we have proved the isomorphism, so we can apply 
$j_{\ast}$ to the equation. Note that this argument would fail if we had the 
additional factor $\mc{L}_{\bar{\alpha} _2 (x-1)} $ because 
$j_{\ast}$ does not commute with tensor product in general ($\otimes C^{\mr{deg}}$ is not a problem).
 
\end{proof}

\subsection{\rm{}}
\label{SS:exrig2}  
Here is an example of a quadratic transformation.
Let $\varepsilon$ be the Legendre character of $\FF_q ^{\times}$ ($q$ is odd; 
$\varepsilon ^2 =1, \varepsilon \neq 1$). We let
$\beta _1, \beta _2$ be characters of $\FF_q ^{\times}$ such that none of 
$\beta _1 ^2, \beta _2^2, \beta _1\beta _2 \varepsilon$ is $1$. 

\begin{proposition}
\label{P:1} Let $\mc{H} := \mc{H} ^{\mr{can}}(\varepsilon, 1; \beta _1, \beta _2)$, and 
 $\mc{K} := \mc{H} ^{\mr{can}}(\beta _1 \beta _2\varepsilon, 1; \beta _1^2 , \beta _2^2)$. Then we have an equality
 of trace functions  $t_{\mc{H}} (x^2)= C t_{\mc{K}}((x+1)/2)$ for an explicitly computable 
$C \in \qlbar$. 
 \end{proposition}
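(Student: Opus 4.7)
The plan is to realize the asserted trace identity as a sheaf-level isomorphism on $\mathbb{P}^1$ and then invoke Katz's rigidity theorem. Write $\phi(x) = x^2 : \mathbb{P}^1 \to \mathbb{P}^1$ (finite of degree $2$, ramified of order $2$ over $0$ and over $\infty$, étale elsewhere) and $\tau(x) = (x+1)/2 : \mathbb{P}^1 \to \mathbb{P}^1$ (an automorphism sending $-1 \mapsto 0$, $1 \mapsto 1$, $\infty \mapsto \infty$). The claimed identity $t_{\mc{H}}(x^2) = C\, t_{\mc{K}}((x+1)/2)$ is equivalent to the sheaf isomorphism $\phi^*\mc{H} \cong \tau^*\mc{K} \otimes (C^{-1})^{\mr{deg}}$ of lisse $\qlbar$-sheaves on $\mathbb{G}_m \setminus \{\pm 1\}$.

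The core step is matching local inertia representations of the two pullbacks at the punctures of $\mathbb{P}^1$. The sheaf $\mc{H} = \mc{H}^{\mr{can}}(\varepsilon, 1; \beta_1, \beta_2)$ has, at $0, 1, \infty$: semisimple monodromy with eigenvalues $\{\varepsilon, 1\}$; a pseudoreflection with nontrivial eigenvalue $\beta_1\beta_2/\varepsilon = \beta_1\beta_2\varepsilon$ (using $\varepsilon^2=1$); and semisimple $\{\bar{\beta}_1, \bar{\beta}_2\}$. Pullback by $\phi$ squares the tame inertia characters at the ramified points, so $\phi^*\mc{H}$ has: trivial inertia at $0$ (eigenvalues $\{\varepsilon^2, 1\} = \{1,1\}$, so the sheaf extends lisse across $0$); pseudoreflection with nontrivial eigenvalue $\beta_1\beta_2\varepsilon$ at each of $\pm 1$; and semisimple $\{\bar{\beta}_1^{\,2}, \bar{\beta}_2^{\,2}\}$ at $\infty$. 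For $\mc{K} = \mc{H}^{\mr{can}}(\beta_1\beta_2\varepsilon, 1; \beta_1^2, \beta_2^2)$: semisimple $\{\beta_1\beta_2\varepsilon, 1\}$ at $0$, pseudoreflection with nontrivial eigenvalue $\beta_1^2\beta_2^2/(\beta_1\beta_2\varepsilon) = \beta_1\beta_2\varepsilon$ at $1$, and semisimple $\{\bar{\beta}_1^{\,2}, \bar{\beta}_2^{\,2}\}$ at $\infty$. Since $\tau$ is an étale automorphism of $\mathbb{P}^1$, $\tau^*\mc{K}$ inherits these as its inertia representations at $-1, 1, \infty$ respectively, matching those of $\phi^*\mc{H}$.

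Both $\phi^*\mc{H}$ and $\tau^*\mc{K}$ are thus lisse rank-$2$ $\qlbar$-sheaves on $\mathbb{P}^1 \setminus \{-1, 1, \infty\}$ whose local inertia actions agree at every puncture. They are cohomologically rigid: the standard index formula gives $\mr{rig} = 2 \cdot 4 - 3\cdot (4-2) = 2$ for any rank-$2$ local system on $\mathbb{P}^1$ with three non-scalar semisimple (or pseudoreflection) singular points. Geometric irreducibility of $\tau^*\mc{K}$ is immediate from that of $\mc{K}$; for $\phi^*\mc{H}$, one uses $\phi_*\phi^*\mc{H} \cong \mc{H} \oplus (\mc{H}\otimes\mc{L}_\varepsilon)$, so reducibility occurs only if $\mc{H} \cong \mc{H}\otimes\mc{L}_\varepsilon$, equivalently $\{\beta_1, \beta_2\} = \{\varepsilon\beta_1, \varepsilon\beta_2\}$, a condition one checks separately using the hypotheses on $\beta_1, \beta_2, \varepsilon$. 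Theorem \ref{T:rigid1} with its arithmetic descent then yields $\tau^*\mc{K} \cong \phi^*\mc{H} \otimes C^{\mr{deg}}$ over $\mbf{F}_q$ for a unique scalar $C \in \qlbar^\times$; comparing Frobenius traces gives the claimed identity.

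The constant $C$ is pinned down by evaluating both sides at a single convenient $x_0 \in \mbf{F}_q^\times$ with $x_0 \ne \pm 1$ and $x_0^2,\, (x_0+1)/2 \notin \{0,1\}$: Lemma \ref{L:charshv1} rewrites each $t_{\mc{H}}(x_0^2)$ and $t_{\mc{K}}((x_0+1)/2)$ as an explicit Jacobi-sum prefactor times a character sum in one variable, and a linear change of variable (in the same spirit as the proof of Lemma \ref{L:2}) identifies the two sums, leaving $C$ as a quotient of Jacobi sums. The main technical obstacle is the bookkeeping in the monodromy computation: one must verify that the inertia representations produced by the quadratically ramified map $\phi$ at $0$ and $\infty$ agree exactly—not just up to eigenvalue data, but as $I^{\mr{tame}} \cong \hat{\ZZ}(1)$-modules—with those produced by the linear substitution $\tau$, accounting for the factor of $2$ from the ramification index. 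Once this identification is in place, rigidity immediately closes the argument.
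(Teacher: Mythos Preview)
Your rigidity argument is essentially the paper's: compute the local monodromies of $\mc{H}$, pull back by $x\mapsto x^2$ (which squares the tame characters at $0$ and $\infty$ and trivializes the local monodromy at $0$ because $\varepsilon^2=1$), and observe that $h^*\mc{K}$ with $h(x)=(x+1)/2$ has matching inertia at $-1,1,\infty$. Rigidity then gives a geometric isomorphism and hence an arithmetic isomorphism up to a twist $C^{\mr{deg}}$. Your extra bookkeeping on cohomological rigidity is fine; the irreducibility of $\phi^*\mc{H}$ you try to deduce from $\phi_*\phi^*\mc{H}\cong\mc{H}\oplus(\mc{H}\otimes\mc{L}_\varepsilon)$ is not quite argued correctly, but it is unnecessary: applying Theorem~\ref{T:rigid1} with the irreducible rigid sheaf $\tau^*\mc{K}$ as $\mc{F}$ already forces the isomorphism.

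The genuine difference, and where your proposal slips, is in pinning down $C$. You propose to evaluate at a generic $x_0$ with $x_0^2,\,(x_0+1)/2\notin\{0,1\}$, express both traces via Lemma~\ref{L:charshv1} as one-variable character sums, and relate them by a linear change of the summation variable ``in the spirit of Lemma~\ref{L:2}.'' But in Lemma~\ref{L:2} the two sheaves are evaluated at the \emph{same} argument $1/2$; here the arguments $x_0^2$ and $(x_0+1)/2$ are related \emph{quadratically}, and a linear substitution in the summation variable $t_1$ will not carry one sum to the other. So as written this step does not yield $C$ as a clean quotient of Jacobi sums.

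The paper's device is cleaner and avoids this problem entirely: evaluate at $x=1$, where $[2](1)=1=h(1)$. The point $t=1$ is singular for both $\mc{H}$ and $\mc{K}$, but the stalks there are one-dimensional (the local monodromy at $1$ is a pseudoreflection), and Corollary~\ref{C:charshv1} gives each Frobenius trace at $t=1$ explicitly as a ratio of Jacobi sums. Since $[2]$ is \'etale at $x=1$ and $h$ is an automorphism, the Frobenius actions on the pulled-back stalks coincide with those on $\mc{H}_{\bar 1}$ and $\mc{K}_{\bar 1}$, so $C=t_{\mc{H}}(1)/t_{\mc{K}}(1)$ is an explicit quotient of Jacobi sums. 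This is the step you should substitute for your generic-point evaluation.
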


\begin{proof}
The sheaf 
$\mc{H} := \mc{H} ^{\mr{can}}(\varepsilon, 1; \beta _1, \beta _2)$ has monodromy
\[
\begin{pmatrix}
\varepsilon & 0\\
0 &1
\end{pmatrix}, \ \ \ 
\begin{pmatrix}
1 & 0\\
0 &\beta _1 \beta _2 \varepsilon
\end{pmatrix}, \ \ \ 
\begin{pmatrix}
\beta _1 ^{-1} & 0\\
0 &\beta _2 ^{-1}
\end{pmatrix}
\]
respectively at $0, 1, \infty$. Let $[2]:  \mbf{G}_m \to \mbf{G}_m$ be the map 
$t \mapsto t^2$. Then $[2]^{\ast }\mc{H} $ is lisse on $\mbf{G}_m$ except possibly
at $0, -1, 1, \infty$, where the monodromies are respectively 
\[
\begin{pmatrix}
\varepsilon & 0\\
0 &1
\end{pmatrix}^2, \ \ \ 
\begin{pmatrix}
1 & 0\\
0 &\beta _1 \beta _2 \varepsilon
\end{pmatrix}, \ \ \ 
\begin{pmatrix}
1 & 0\\
0 &\beta _1 \beta _2 \varepsilon
\end{pmatrix}, \ \ \ 
\begin{pmatrix}
\beta _1 ^{-1} & 0\\
0 &\beta _2 ^{-1}
\end{pmatrix}^2.
\]
Since $\varepsilon ^2 =1$, the first one is the identity matrix, and thus
$[2]^{\ast }\mc{H} $ is lisse at $0$. Let
 $h:  \mbf{G}_m \to \mbf{G}_m$ be the map $h(t) = (t+1)/2$. Then 
 $h^*\mc{K}$ has monodromy at $-1, 1, \infty$ given by the last three matrices above.
By rigidity, we have a geometric isomorphism  $h^*\mc{K} \cong [2]^{\ast }\mc{H}$, and 
since these are irreducible, they are isomorphic up to a twist 
$h^*\mc{K} \cong [2]^{\ast }\mc{H} \otimes \Phi$, for a character
 $\Phi $ of $\mr{Gal}(\bar{\FF}_q/\FF _q)$ whose value on a Frobenius 
 generator is a unit $C$ in $\qlbar$. We can compute this number in several ways. 
One way is to observe that $[2](1) = 1^2 = h(1)$. Since $[2]$ induces an isomorphism
from the henselization of the local ring at $1$ to the  henselization of the local ring at $1$,
the Frobenius actions on the stalks $\mc{H}_1$ and $([2]^*\mc{H})_1$ coincide. 
On the other hand $h$ is an isomorphism, so the Frobenius actions on 
$\mc{K}_1$ and $(h^* \mc{K})_1$ coincide. Therefore,  $C$ will be the ratio 
of the traces of Frobenius on $\mc{H}_1$ and $\mc{K}_1$. In general
(see corollary \ref{C:charshv1}), 
for $\mc{F} =\mc{H}^{\mr{can}} ( \alpha _1, \alpha _2 ;  \beta _1, \beta _2) $
\[
t_{\mc{F}} (1)= 
 \mr{Tr} (\mr{Frob }_1 | \mc{F}_{\bar{1}}) 
 = -\frac{\beta _2 \bar{\alpha}_2(-1) 
 J(\alpha _1 \bar{\beta}_2, \beta _1 \beta_2\bar{\alpha}_1\bar{\alpha}_2)}
 {J(\alpha _1, \beta_1 \bar{\alpha}_1)J(\alpha _ 2, \beta_2 \bar{\alpha}_2)}.
\]
We get an equality of hypergeometric character sums $t_{\mc{H}} (x^2)= C t_{\mc{K}}((x+1)/2)$, 
where $C =t_{\mc{H}} (1)/t_{\mc{K}} (1) $.

\end{proof}

This identity is the analog of one of Kummer's quadratic transformations. In the language 
of Riemann's $P$-function, this is 
\[
P
\begin{bmatrix}
0 & \infty &1 &\\
0 & a & 0 & x^2\\
\frac{1}{2}& b&\frac{1}{2} -a-b &
\end{bmatrix}= 
P
\begin{bmatrix}
0 & \infty &1 &\\
0 & 2a &0 & (x+1)/2\\
\frac{1}{2} -a-b & 2b&\frac{1}{2} -a-b  &
\end{bmatrix}.
\]
Note that this is less precise than the corresponding formula given by Kummer. 
The above is an equality of 2-dimensional spaces of multivalued holomorphic functions
which are the solution spaces for the corresponding hypergeometric differential equations.

In terms of finite field hypergeometric functions in \cite{FLRST}, we have the following result. For a given odd prime $p$, let $A$, $B$ be any  characters of $\FF_p^\times$ with $A^2$, $B^2$, $\eps \ol AB$, $\eps A\ol B$, $\eps AB\neq 1$. For any $z\in \FF_p$ with $z^2\neq 1$, $0$, we have 
$$
  B(4)\pPPq 21 {A& B}{& 
  \eps}{z^2}= \frac{g(\eps \ol B)g(\eps A)}{g(\eps)g(\eps A \ol B)}\pPPq 21 {A^2& B^2}{& 
  \eps AB}{\frac{z+1}2}.
$$
Equivalently, 
$$
  \pFFq 21 {A& B}{& 
  \eps}{z^2}=\frac{J(\eps A, \eps B)}{J(\eps, \eps AB)}  \pFFq 21 {A^2& B^2}{& 
  \eps AB}{\frac{z+1}2}
  .
$$ To derive these identities, we also use the formula
$$
  g(\chi^2)g(\eps)=\chi(4)g(\chi)g(\eps \chi),
$$
for any character $\chi$.

\subsection{\rm{}}
\label{SS:exrig3}  The general pattern of these identities is in the shape
$t_{\mc{H}} (R(x)) = C t_{\mc{K}} (R(x))$ for two (rigid) local systems 
$\mc{H}, \mc{K}$ and rational functions $R(x), S(x)$. Here $C \in \qlbar$, but in 
fact, the constant  $C$ is an algebraic number. The dependence
of $C$ on the various parameters appearing in $\mc{H}, \mc{K}$ is an
interesting problem. In the previous example, the expression 
for $C = C(q; \varepsilon, \beta_1, \beta_2)$ in terms of Jacobi sums
shows that, in an appropriate sense, 
\begin{itemize}
\item[1.] For a fixed prime $p$, $C(p^e; \varepsilon, \beta_1, \beta_2)$ is a 
$p$-adic analytic function of the $\beta _1, \beta _2$. This follows from the 
Gross-Koblitz formula for Gauss sums, \cite{GK}. 
\item[2.] For fixed $\beta _1, \beta _2$, $C(q; \varepsilon, \beta_1, \beta_2)$ defines a Hecke 
character (an automorphic form for $\mr{GL}_1$) of a cyclotomic field, \cite{Weil1}, \cite{Weil2}.
\end{itemize}

\section{Examples: Arithmetic Triangle Groups}
\label{S:ATG}
The basic idea here is based on the following observation: 
Let
$X_1 := \Gamma _1\backslash \mathfrak{H}^*$, $X_2 := \Gamma _2\backslash \mathfrak{H}^*$ be the Riemann surfaces
obtained as  quotients of the suitably compactified complex upper half plane
by  triangle groups $\Gamma _1$, $\Gamma_2$, respectively. Further assuming $\Gamma _1 \subset \Gamma _2 $,  we obtain a covering of corresponding  Riemann surfaces $X_1 \to X_2$.  There is a hypergeometric DE attached 
to a triangle group: the DE belonging to the Schwarz uniformization. 
The Schwarzian differential equations pull back under coverings.
This is slightly complicated by the fact that the Schwarzian DE 
is a third order equation for the ratio $y_1/y_2$ of a second 
order DE which is only well-defined up to a twist. This means 
that extra factors can occur in the formulae for the second 
order equations.

We say a triangle group $\Gamma \subset \slr$ is {\it arithmetic}
if it arises from a quaternion algebra over a totally real number field. 
These have been classified by Takeuchi, see \cite{Tak}. A vast
generalization appears in the work of Deligne and Mostow in \cite{DM}.

Here we give some examples arising from arithmetic triangle groups. 
\begin{example}[A cubic formula from the groups $(2,4,8)$ and $(2,3,8)$.]

We have the cubic transformation between the hypergeometric functions:

 \[
\pFq {2}{1} {\frac {1}{48},  \frac{17}{48}}
{  ,\frac {1}{2}} {\frac{x(x-9)^2}{(x+3)^3}}=   \left (1+\frac x3\right )^{1/16}
  \pFq {2}{1} {\frac {1}{16},  \frac{3}{16}}{ ,\frac {1}{2}}{x}.
  \]

In the language 
of Riemann's $P$-function, this is 
\[
 \left (\frac 3{x+3}\right )^{1/16}  P
\begin{bmatrix}
0 & \infty &1 &\\
0 &  \frac 1{48} &0 & \frac{x(x-9)^2}{(x+3)^3}\\
\frac{1}{2}& \frac{17}{48} & \frac 18&
\end{bmatrix}=
P
\begin{bmatrix}
0 & \infty &1 &\\
0 & \frac 1{16} &0 & x\\
\frac{1}{2}&\frac{3}{16}&\frac 14  &
\end{bmatrix}.
\]

Set $t= \frac{x(x-9)^2}{(x+3)^3}$. Then the values of $x$ for $t=0$, $1$, $\infty$, are as follows:
$$
  \begin{array}{c||c|c|c}
       t& 0 & 1& \infty \\ \hline 
       x& 0, 9, 9& 1, 1, \infty& -3,-3,-3
  \end{array}
$$

In terms of Katz's hypergeometric sheaves and finite field hypergeometric functions, we have the following results. For a given prime $p\equiv 1 \mod 48$, let $\eta$ be any primitive character of $\FF_p^\times$ of order $48$. Then we have 
$$
  t_{\mc H}\left( \frac{x(x-9)^2}{(x+3)^3}\right)=  t_{\mc{K}}(x),
$$
where 
 $\mc{H} := \mc{H} ^{\mr{can}}(\varepsilon, 1; \ol\eta, \ol\eta^{17}) $, and 
 $\mc{K} := \mc K^{\mr{can}}( \eps, 1; \ol\eta , \ol\eta^9)\otimes (\mc{K} ^{\mr{can}} (1; \eta^3)\otimes \mc L_{\eta^3, 1-x/3})$. 
Let 
$
 f(z)=z(z-9)^2/(z+3)^3
$. For any $z\in \FF_p$ with $f(z)\neq 0$, $1$, and $\infty$, we have 
$$
  \pFFq 21 {\eta& \eta^{17}}{& 
  \eps}{f(z)}= \eta^3\left (1+z/3\right )\pFFq 21 {\eta^3& \eta^{9}}{& 
  \eps}{z}
  .
$$

The sheaf $\mc H$ has monodromies 
\[
\begin{pmatrix}
\varepsilon & 0\\
0 &1
\end{pmatrix}, \ \ \ 
\begin{pmatrix}
1 & 0\\
0 &\eta^6
\end{pmatrix}, \ \ \ 
\begin{pmatrix}
\eta & 0\\
0 &\eta^{17}
\end{pmatrix}
\]
at $t=0$, $1$ and $\infty$, respectively.  Let $g(x)=x(x-9)^2/(x+3)^3$ . Then $g^\ast \mc H$ has monodromies 
as follows:
$$
  \begin{array}{c||c|c|c|c|c}
       x& 0 & 9 & 1 & \infty& -3 \\ \hline 
       & \begin{pmatrix}
\varepsilon & 0\\
0 &1
\end{pmatrix} 
& 
\begin{pmatrix}
\varepsilon & 0\\
0 &1
\end{pmatrix}^2
  
  & 
  \begin{pmatrix}
1 & 0\\
0 &\eta^6
\end{pmatrix}^2&
\begin{pmatrix}
1 & 0\\
0 &\eta^6
\end{pmatrix}&
\begin{pmatrix}
\eta & 0\\
0 &\eta^{17}
\end{pmatrix}^3
\end{array}
$$
Therefore, 
$g^\ast \mc H \otimes (\mc{H} ^{\mr{can}} (1; \ol\eta^3)\otimes \mc L_{\ol\eta^3,x/(x+3)}) $ and $ \mc K^{\mr{can}}( \eps, 1; \ol\eta , \ol\eta^9)$ have the  monodromies
$$\begin{pmatrix}
\varepsilon & 0\\
0 &1
\end{pmatrix}, \quad  \begin{pmatrix}
1 & 0\\
0 &\eta^{12}
\end{pmatrix}, \quad  \begin{pmatrix}
\eta^3 & 0\\
0 &\eta^9
\end{pmatrix}
$$
at $0$, $1$, and $\infty$, respectively.  This give us the identity between the traces of 
Frobenius and thus the finite hypergeometric functions. 

\end{example}

\medskip
\begin{example}\cite[Entry (116)]{Goursat}
Goursat showed the following cubic transformation of hypergeometric functions
  \[
\pFq {2}{1} {a,  a+\frac {1}{3}}
{, 2a+\frac {5}{6}}{27\frac{x(1-x)^2}{(1+3x)^3}}=   \left (1+ 3x\right )^{3a}
\pFq {2}{1} {3a,  3a+\frac{1}{2}}{,  2a+\frac {5}{6}}{x}.
  \]
 When $a=\frac{2n-1}{24n}$ for a positive integer $n$, the function
$f(x)$ gives the covering map from the curve associated to the arithmetic triangle group $(2,6n,12n)$ to the curve associated to the arithmetic triangle group $(2,3,12n)$.

In the language 
of Riemann's $P$-function, this is 
\[
 \left (\frac 1{1+3x}\right )^{3a}  P
\begin{bmatrix}
0 & \infty &1 &\\
0 &  a &0 &27\frac{x(1-x)^2}{(1+3x)^3}\\
\frac{1}{6}-2a& a+\frac13 & \frac 12&
\end{bmatrix}=
P
\begin{bmatrix}
0 & \infty &1 &\\
0 & 3a &0 & x\\
\frac{1}{6}-2a& 3a+\frac12 & \frac 13-4a&
\end{bmatrix}.
\]

Set $f(x)=27\frac{x(1-x)^2}{(1+3x)^3}$. Then the values of $x$ for $f(x)=0$, $1$, $\infty$, are as follows:
$$
  \begin{array}{c||c|c|c}
       f(x)& 0 & 1& \infty \\ \hline 
       x& 0, 1, 1& 1/9, 1/9, \infty& -1/3,-1/3,-1/3
  \end{array}
$$

In terms of  hypergeometric sheaves and finite field hypergeometric functions, we have the following results. For a given prime $p\equiv 1 \mod 6$, let $\eta$ be any primitive character of $\FF_p^\times$ of order $6$, and $\alpha$ be any character with $\alpha^6\ne 1$. Then we have 
$$
  t_{\mc H}\left( f(x)\right)=  t_{\mc{K}}(x),
$$
where 
 $\mc{H} := \mc{H} ^{\mr{can}}(\ol\alpha^2\ol\eta, 1; \ol\alpha, \ol\alpha\ol\eta^2) $, and 
 $\mc{K} := \mc K^{\mr{can}}(\ol\alpha^2\ol\eta, 1; \ol\alpha^3, \ol\alpha^3\eps)\otimes (\mc{K} ^{\mr{can}} (1; \alpha^3)\otimes \mc L_{\alpha^3, 1-3x})$. 
 For any $z\in \FF_p$ with $f(z)\neq 0$, $1$, and $\infty$, we have 
$$
  \pFFq 21 {\alpha^3& \eps \alpha^3}{&\alpha^2\ol 
  \eta}{z}= \ol \alpha(1+3z)\pFFq 21 {\alpha& \eps \alpha\eta^2}{&\alpha^2\ol 
  \eta}{f(z)}.
  $$

\end{example}
\medskip

\begin{example}\cite[Equation (28)]{Vidunas}
Similarly, we have the finite field version of the following degree-$10$ algebraic transformation:
\begin{align*}
   \left(1-57x-1029 x^2+50421 x^3\right )^{1/28}& 
    \pFq {2}{1} {\frac {5}{84},  \frac{19}{42}}{\phantom{AA} \frac {5}{7}}{27x}  \\
 = &\pFq {2}{1} {\frac {1}{84},  \frac{29}{84}}{\phantom{AA} \frac{6}{7}}{\frac{-27x^2(1-27x)(3-49x)^7}{4(1-57x-1029 x^2+50421 x^3)^3}}
\end{align*}

For a given prime $p\equiv 1 \mod 84$, let $\eta$ be any primitive character of $\FF_p^\times$ of order $84$. Let 
$$
 f(z)=1-57z-1029 z^2+50421 z^3,\quad
 g(z)=-z^2(1-27z)(3-49z)^7.
$$For any $z\in \FF_p$ with $f(z)/g(z)\neq 0$, $1$, and $\infty$, we have 
$$
  \pFFq 21 {\eta^{10}& \eta^{38}}{& 
  \eta^{60}}{27z}= \ol \eta^3\left(f(z)\right)\pFFq 21 {\eta& \eta^{29}}{& 
  \eta^{72}}{\frac {27}4\frac{g(z)}{f(z)^3}}
  .
$$
\end{example}

\section{Examples: Elliptic curves}
\label{S:exelliptic}

(See \cite{St0}, \cite{St1}).
 We consider the differential equations satisfied by the periods of  families
 of elliptic curves. Let 
 \[
 y^2 = 4x^3 -g_2 x-g_3
 \]
 be the Weierstrass family of elliptic curves. $\Delta= g_2 ^3 -27 g_3^2 $ the discriminant. When 
 $D \neq 0$ this is an elliptic curve. One has the differentials of the first and second kind
 \[
 \omega = \frac{dx}{\sqrt{4x^3 -g_2 x-g_3}}= \frac{dx}{y}, \quad 
  \eta = \frac{xdx}{\sqrt{4x^3 -g_2 x-g_3}} = \frac{xdx}{y}.
 \]
 These generate the deRham cohomology of the curve. Recall
\begin{proposition}
(\cite[Proposition 2.5]{Del2}) Above $S = \mr{Spec} (\ZZ [2^{-1}, 3^{-1}]   $ there is a
moduli scheme for  pairs $(E, \omega)$  consisting of  a curve of genus 1 together 
with an invariant invertible differential one form. This scheme is 
\[
\bar{M} = \mr{Spec} (\ZZ [2^{-1}, 3^{-1}] [g_2, g_3]
\]
with universal curve (in nonhomogeneous coordinates)  $y^2 = 4x^3 -g_2 x-g_3$
with invariant differential $\omega = dx/y$. 
\end{proposition}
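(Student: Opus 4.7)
The plan is to produce, for any $T$-point $(E/T,\omega)$ of the moduli problem, a canonical Weierstrass presentation with coefficients $(g_2,g_3)\in\Gamma(T,\mc{O})^2$, and to verify the resulting functor map $T\mapsto\{(E,\omega)\}/{\cong}\,\to\,\Hom(T,\bar M)$ is a bijection.

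First I would exploit Riemann--Roch on the relative curve $\pi:E\to T$, using the identity section $e:T\to E$. For $n\ge 1$, the sheaf $\pi_{*}\mc{O}(ne)$ is locally free of rank $n$ on $T$. Thus $\pi_{*}\mc{O}(2e)$ and $\pi_{*}\mc{O}(3e)$ provide, locally on $T$, a function $X$ with a pole of order exactly $2$ at $e$ and a function $Y$ with a pole of order exactly $3$, each unique up to adding lower-order sections. Next, I would use the chosen invariant differential $\omega$ to rigidify the normalization: along $e$, $\omega$ determines a canonical formal parameter $t$ (via $\omega=(1+O(t))\,dt$), and then there is a unique choice of $X=t^{-2}+O(1)$ and $Y=-2t^{-3}+O(t^{-1})$ with $\omega=dX/Y$. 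Here the invertibility of $2$ on $T$ is essential; this is where the hypothesis $2\in\mc{O}_T^{\times}$ is first used.

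The second step is to extract the Weierstrass relation. In $\pi_{*}\mc{O}(6e)$, which has rank $6$, the seven sections $1,X,X^2,X^3,Y,XY,Y^2$ are linearly dependent. Matching leading terms in $t$ and invoking the $[-1]$-involution (under which $X$ is invariant and $Y$ changes sign) forces a unique relation
\[
Y^2=4X^3+a_2 X^2+a_4 X+a_6,\qquad a_i\in\Gamma(T,\mc{O}).
\]
Completing the cube $X\mapsto X-a_2/12$ kills $a_2$, and the necessity of inverting $12$ (equivalently $2$ and $3$) on $T$ is exactly the second use of the hypothesis. We land on a unique $Y^2=4X^3-g_2 X-g_3$.

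The third step, and the main technical obstacle, is to show this construction is functorial in $T$, independent of the chosen trivialization used to apply Riemann--Roch locally (so that the $g_i$ glue to global sections), and inverse to the tautological map sending $(g_2,g_3)\in\Gamma(T,\mc{O})^2$ to the pair $(\mathrm{Proj}\,\mc{O}_T[X,Y,Z]/(Y^2Z-4X^3+g_2 XZ^2+g_3 Z^3),\ dX/Y)$ with origin the section at infinity. Uniqueness of $(X,Y)$ above makes the forward map well-defined; to close the loop one checks that starting from a Weierstrass family, the canonical parameter $t$ produced by $\omega=dX/Y$ reproduces the same $(X,Y)$. The subtlety — as opposed to the working over a field — lies in managing the relative cohomology and the locally free but possibly nontrivial $\pi_*\mc{O}(ne)$; the cleanest route is to descend the construction from affine opens where the sheaves trivialize, and to check compatibility with base change $T'\to T$, which follows from the flat base change formula since $R^1\pi_*\mc{O}(ne)=0$ for $n\ge 1$ by Riemann--Roch on geometric fibers. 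Once these compatibilities are in place, the bijection of functors is established, proving that $\bar M$ represents $(E,\omega)$.
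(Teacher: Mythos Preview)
The paper does not give its own proof of this proposition; it is simply quoted from Deligne's formulaire (the citation \cite[Proposition 2.5]{Del2}) as background for the discussion of elliptic-curve period differential equations. Your sketch is the standard argument, and is essentially the one Deligne gives: use relative Riemann--Roch on $\pi_*\mc{O}(ne)$ to produce $X,Y$, normalize them via the chosen $\omega$, extract the Weierstrass relation from the rank-$6$ bundle $\pi_*\mc{O}(6e)$, and complete the cube using that $6$ is invertible. So there is nothing to compare against in this paper, but your outline is correct and matches the source being cited.

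One small point worth tightening: you invoke the $[-1]$-involution to kill the odd terms $a_1XY$ and $a_3Y$ in the general Weierstrass relation. This is fine once you know such an involution exists on an arbitrary genus-one curve over $T$ with section, but over a general base that is itself something one usually deduces \emph{from} the Weierstrass form. Deligne's argument avoids this circularity by first writing down the full five-coefficient Weierstrass equation and then completing the square in $Y$ (using $2\in\mc{O}_T^\times$) to remove $a_1,a_3$, before completing the cube in $X$ (using $3\in\mc{O}_T^\times$) to remove $a_2$. Your normalization of $Y$ to have leading term $-2t^{-3}$ already bakes in the factor of $2$, so this is only a cosmetic reorganization, but it makes the role of the two invertibility hypotheses cleaner and avoids appealing to $[-1]$ before it is constructed.
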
 
In Deligne's formulaire, singular curves are permitted. Precisely, a curve 
of genus 1 over a base $T$ is a proper and flat morphism of finite presentation
$p : E \to T$  together with a section $e$ contained in the open subset of smoothness
of $p$ whose geometric fibers are reduced irreducible curves of arithmetic genus 1. The fibers 
are of three types: 
\begin{itemize}
\item[1.] An elliptic curve, i.e. proper, smooth connected of genus 1;
\item[2.] a projective line in which two distinct points have been identified (cubic 
in $\PP^2$ with an ordinary double point);
\item[3.]  a projective line in which two infinitely near points have been identified (cubic in 
$\PP^2$ with a cusp).

\end{itemize}
 We let $M \subset \bar{M}$ be the open set where $\Delta \neq 0$, and $f:E \to M$ be the 
 universal Weierstrass elliptic curve. $f$ is a smooth morphism.
 We get the deRham cohomology sheaves on $M$, 
 \[
 H_{DR} ^i (E/M) := \mbf{R}^i f_{\ast} \Omega ^{\bullet} _{E/M}. 
 \]
 There is a filtration
 \[
 \begin{CD}
 0 @>>> f_{\ast} \Omega ^{1} _{E/M} @>>>H_{DR} ^1(E/M)@>>> R^1 f_{\ast} \mc{O}_{E}
 @>>>0.
  \end{CD}
 \]
 Locally on $M$,  $H_{DR} ^i (E/M)$ is spanned as an $\mc{O}_M$-module by 
 $\omega, \eta$, with the submodule $ f_{\ast} \Omega ^{1} _{E/M} $ spanned by 
 $\omega$. There is an integrable Gauss-Manin connection 
 \[
 \nabla: H_{DR} ^1 (E/M)  \to \Omega ^1 _{M/S}\otimes _{\mc{O}_M}H_{DR} ^1 (E/M). 
 \]
 which has regular singularities at infinity. Note that $\bar{M}-M$ is not a divisor with normal 
 crossings, but we can compactify $M$ in such a way that the divisor at infinity is a normal 
 crossings divisor. The corresponding morphism of analytic spaces is denoted 
 $f^{an}: E^{an}\to M^{an}$. We have 
 \[
  H_{DR} ^1 (E^{an}/M^{an}) =  R^1 f^{an}_{\ast}\CC \otimes _{\CC}\mc{O}_{M^{an}}, \quad
  R^1 f^{an}_{\ast} \CC = \mr{Ker}(\nabla ^{an}).
  \]
 When clear in context, we omit the superscript $an$ for a morphism of analytic spaces.
 
 We can describe this Gauss-Manin connection explicitly as follows. 
 Let $U$ be an analytic set homeomorphic with a unit disk in the complex $u$-plane. 
 We consider a family $f: E \to U$ of elliptic curves in Weierstrass form with holomorphic
 functions $g_2(u)$, $g_3(u)$ with  $\Delta (u) = g_2(u)^3 -27g_3(u)^2 \neq 0$ at all points $u \in U$.
 This family can be regarded as the base-change the universal
 $E^{an}\to M^{an}$ by a morphism $U \to M^{an}$. 
 At any given point $u_0 \in U$, $H^1 (E_{u_0}, \CC) =( R^1 f_{\ast}\CC)_{u_0}$, and the 
 elements can be represented by differentials of the first and second kind modulo exact 
 differentials. Thus we can represent the generators of this two-dimensional vector space
 by differential forms $\omega = dx/y$, $\eta = xdx /y$. The Gauss-Manin connection
 gives a lifting of the derivation $\partial/\partial u$ to an endomorphisms of the 
 sheaf  $H_{DR} ^1 (E/U)$. Concretely we extend the action of differentiation by $u$
 to a derivation $D_u$
 of the ring $\mc{O} (U) [x, y]/\langle y^2 -4 x^3 + g_2(u) x + g_3(u)\rangle $ by setting $D_u(x) = 0$.  
 In this way we get a differential equation
 \[
 \frac{d}{du}
 \begin{bmatrix}
 h_1\\ h_2
 \end{bmatrix}=\frac{1}{24\Delta}
 \begin{bmatrix}
 -2\Delta '& 18 \delta\\
 -3g_2 \delta & 2\Delta '
 \end{bmatrix}  \begin{bmatrix}
 h_1\\ h_2
 \end{bmatrix},
\quad \Delta ' = \frac{d \Delta }{du}, \quad
\delta = 3 g_3 \frac{d g_2}{du} - 2 g_2 \frac{d g_3}{du} .
 \]
 Since $E/U$ is topologically trivial, we can choose a 1-cycle 
 $0 \neq \gamma \in H_1 (E_{u_0}, \ZZ)$ which gives a section $\gamma (u)$
 of the sheaf $\underline{H_1 (E, \ZZ)} \sim \underline{\ZZ} ^2 $ on $U$. 
 Then the periods $h_1(u) = \int _{\gamma (u)}\omega$,  $h_2(u) = \int _{\gamma (u)}\eta$ give 
 a basis of local holomorphic solutions to the above differential equation.

 {\bf Example.}  Elliptic curves with $j$-invariant = $j$. Recall that if $j\neq 0, 1$, the most 
 general solution to the equation $g_2 ^3 /\Delta = j$ in any field $k$ of characteristic 
 $\neq 2, 3$ is of the form $g_2 = t \xi ^2$, $g_3 = t \xi ^3$, $t =27 j (j-1)^{-1}$, 
 $\xi \in k^{\times}$. (see \cite[Lemma 1]{yI0}). We can apply this to $k = \QQ(j)$, with 
 $g_2 = g_2 = t$ we get a family of elliptic curves over $\PP^1 _j - \{0, 1, \infty \}$ with 
 $j$-invariant = $j$. We can call this a universal elliptic curve, although strictly speaking 
 it does not represent the obvious functor (for this we need the modular stack or orbifold
 quotient $\slz \backslash \backslash\mfr{H}$). Nonetheless we will consider this 
 family $E \to \PP^1 _j - \{0, 1, \infty \} $ as a scheme over $ A = \ZZ [2^{-1}, 3^{-1}][j, (j(j-1))^{-1}]$. 
 That is 
 \[
 E = \mr{Proj}\left ( A[x, y, z]/\langle y^2 z- 4x^3 +t x z^2 + t z^3\rangle\right ) , \quad 
 t = 27 j (j-1)^{-1}.
 \]

See \cite{St2}.
Consider the hypergeometric differential equation
\[
\frac{d^2\omega}{dx^2} + \frac{1}{x}\frac{d\omega}{dx}
+  \frac{(31/144)x - (1/36)}{x^2(x-1)^2}\omega=0
\]
 In terms of Riemann's P-function, this is 
 
 \[
P
\begin{bmatrix}
0 & \infty &1 &\\
-1/6 & 0 & 1/4 & x\\
1/6& 0&3/4 &
\end{bmatrix}= 
x^{-1/6}(x-1)^{1/4}
P
\begin{bmatrix}
0 & \infty &1 &\\
0& 1/12 &0 & x\\
1/3 &1/12&1/2  &
\end{bmatrix}.
\]
As was known classically, this is the differential equation for the periods in this  family of 
elliptic curves. Moreover, the monodromy matrices around the  singular points 
$x = \infty, 0, 1$ are respectively
\[
\begin{pmatrix} 1 & 1\\ 0 &1 \end{pmatrix}, \quad\begin{pmatrix} 1 & 1\\ -1&0 \end{pmatrix},\quad 
\begin{pmatrix} 0 & -1\\ 1&0 \end{pmatrix}.
\]
These generate $\slz$. In fact, if $y_1(x), y_2(x)$ are two linearly independent (multivalued)
holomorphic solutions to the differential equation, the ratio $y_1(x)/y_2(x)$ is the Schwarzian 
function for this situation, i.e., it is essentially the inverse of the $j$-function, 
$j : \mathfrak{H} \to \CC$.
A classical reference: \cite{Fricke}.
See Stiller's papers for a modern exposition.

{\bf Example.} Modular families. For a congruence subgroup $\Gamma \subset \slz$ we have  a modular 
curve $X_{\Gamma}$ whose analytic space is $\Gamma \backslash \mathfrak{H}$. When 
$\Gamma$ has no nontrivial elements of finite order, there is a universal elliptic 
curve $E_{\Gamma}\to X_{\Gamma}$. If $\Gamma _1 \subset \Gamma _2$ are two such 
subgroups, there is a morphism $u:X_{\Gamma _1}  \to X_{\Gamma _2}$, and a map
\[
\varphi: u^* E_{\Gamma _2} \to E_{\Gamma _1} 
\]
of elliptic curves over $X_{\Gamma _1} $. This is an isogeny, and therefore it induces an isomorphism
of the $\mc{D}$-modules and the $\ell$-adic representations (since we ignore torsion). Thus we 
obtain transformations of the corresponding motivic sheaves. If $\Gamma$ has elliptic points, 
the situation is more complicated. We do not have universal families. For instance, consider
$\Gamma (2) \subset \Gamma (1) = \slz$. The covering of modular curves
is $X(2) = \PP^1 _{\lambda} \to X(1) = \PP^1 _{j} $ given by 
\[
j = \frac{27 \lambda ^2 (\lambda-1)^2}{4(\lambda ^2 - \lambda +1)^3}. 
\]
Pulling back the elliptic curve with $j$-invariant $j$ by this map does not give the universal Legendre curve
$y^2 = x (1-x)(1-\lambda x)$. The corresponding transformation 
of hypergeometric equations has a Kummer twist:

\[
\pFq{2} {1} {\frac{1}{12}, \frac{5}{12}}{,1}  { \frac{27 \lambda ^2 (\lambda-1)^2}{4(\lambda ^2 - \lambda +1)^3} } =
(1-\lambda + \lambda ^2)^{1/4}
\pFq{2} {1}{ \frac{1}{2}, \frac{1}{2}}{1} {\lambda}.
\]

{\bf Example.} The AGM transform.
See \cite{Cox}. Gauss discovered the 
following transformation of elliptic integrals during his investigations of the arithmetic geometric mean (AGM).
Let 
\[
F(k) := \int_{0}^{1}\frac{dx}{\sqrt{(1-x^2)(1-k^2x^2)}}= 
\frac{\pi}{2} \pFq {2}{1} {\frac {1}{2},  \frac {1}{2}} { ,1}{k^2}
\]
then
\[
F\left (\frac{2\sqrt{k}}{1+k}\right) = (1+k)F(k).
\]

Let $J_m $ be the family of  curves
$y^2 = (1-x^2)(1-m^2x^2)$, and define
\[
m = \frac{2\sqrt{k}}{1+k},\quad x = \frac{(1+k)z}{1+kz^2}, \quad
y = \frac{1-kz^2}{(1+kz^2)^2}w := C w, 
\]
then the above equation becomes 
\[
C^2 (w^2 = (1-z^2)(1-k^2z^2)).
\]
Moreover, 
\[
\frac{dx}{y} = (1+k) \frac{dz}{w}.
\]
This can be understood as follows (see \cite{DeRa}). Let
\[
M_4 = \mr{Spec}\ZZ[i, 1/2, \sigma, (\sigma(\sigma ^4 -1))^{-1}].
\]
This is the moduli scheme for $\Gamma (4)\subset \slz$. 

The universal 
elliptic curve for this is 
\[
E_{\sigma}: y^2 =  x(x-1)(x-\lambda), \quad \lambda = (\sigma + \sigma ^{-1})^2/4.
\]
This curve is isomorphic with the Jacobi quartic
\[
C_{\sigma}: y^2 = (1-\sigma ^2 x^2)(1-x^2 /\sigma^2).
\]
via the change of variables
\[
X = \frac{ \sigma ^2+1}{2 \sigma ^2}\cdot\frac{x-\sigma}{x-1/\sigma}, \ \ 
Y = \frac{\sigma ^4-1}{4 \sigma ^3}\cdot\frac{y}{(x-1/\sigma)^2}
\]
(see \cite{Shioda1}).

A rescaling $x \mapsto \sigma x$ gives the equivalent 
curve $y^2 = (1-\sigma ^4 x^2)(1-x^2 )$, which shows that 
we can view this curve as the pull-back 
via the projection 
\[
M _4 \to M_{2, 4} := \mr{Spec}\ZZ[ 1/2, k, (k(k ^2 -1))^{-1}], 
\quad \sigma \mapsto \sigma ^2 = k
\]
of the quartic  $J_{k}: w^2 = (1-z^2)(1-k^2z^2)$ on  $M_{2, 4}$.
$M_{2, 4}$ is the moduli scheme for the group
\[
\Gamma _{2, 4}= \left \{
\begin{pmatrix}
a & b\\c&d
\end{pmatrix}\in \slz \mid a \equiv d \equiv 1 \text {\ mod } 4, 
c \equiv 0 \text{\ mod }4, b \equiv 0 \text{\ mod }2.
\right \}
\]
Clearly $\Gamma (4 ) \subset \Gamma _{2, 4}\subset \Gamma  (2)_0$.
Here $\Gamma (2)_0$ is the subgroup of $\Gamma (2)$ defined
by $a \equiv d \equiv 1 \text{\ mod\ }4$. Then
$\Gamma (2)_0 \cong \Gamma (2) /\pm 1$. The quotient 
$\Gamma (2)_0\backslash\mathfrak{H}$ is the $\lambda $-line. 
We have $\Gamma (2)_0 /\Gamma (4) \cong \ZZ /2 \times \ZZ /2$, 
and  $\Gamma _{2, 4} /\Gamma (4) \cong \ZZ /2$. The map
$\Gamma (4)\backslash\mathfrak{H}\to \Gamma (2)_0\backslash\mathfrak{H}$
is defined by   
\[
\lambda = (\sigma  + \sigma ^{-1})^2/4.
\]
Then $k(\tau)$ is a Hauptmodul for $\Gamma _{2, 4}$. The 
transformation for the AGM is defined by the map $k(\tau) \mapsto k(2 \tau)$. More 
precisely, it is the correspondance defined by the algebraic 
relation relating $k(\tau), k(2 \tau)$, viz., 
\[
k(\tau) = \frac{2\sqrt{k(2 \tau)}}{1+k(2 \tau)}.
\]

Define two function $p(\sigma) = \sigma ^2$, 
$q(\sigma) = 2\sigma /(1+\sigma ^2)$ both 
mapping $M_4 \to M_{2, 4}$. Then 
\[
 X = \frac{(1+\sigma ^2)x}{1+\sigma ^2 x^2}, \quad
  Y = \frac{(1-\sigma ^2x^2)y}{(1+\sigma ^2 x^2)^2}
\]
defines an isogeny $p^* J_k \cong E_{\sigma}\to q^* J_k$. 

\[
\begin{diagram}
&& M_4   & & \\
& p\ \ \ \  \ldTo\ & & \rdTo\ \ \ \ \ \  q\\
M_{2, 4} & & & & M_{2, 4} 
\end{diagram}
\]

{\bf Example.} The Borwein's cubic transform.

 \begin{equation}\label{2F1-cubic}
\pFq{2}{1}{\frac{1}{3},  \frac{2}{3}}{,1}{1-x^3} 
=
\frac{3}{1+2x} \
\pFq{2}{1}{\frac{1}{3},  \frac{2}{3}}{,1}{\left(\frac{1-x}{1+2x}\right)^3},
\end{equation}
proved by Borwein and Borwein \cite{borweins-1}, \cite{borweins-2} as a cubic analogue of Gauss' quadratic AGM.  
Just as Gauss's formula relates
$\tau$ with $2 \tau$, Borweins' formula relates 
$\tau$ with $3 \tau$.




A finite-field analog of this was proved in \cite{FLRST}:
\begin{theorem}\label{finite2F1-cubic}
For $p\equiv 1 \pmod{3}$ prime, and let $\omega$ be a primitive cube 
root of unity and let $\eta_3$ be a primitive cubic character in $\widehat{\mathbb{F}_p^\times}$.  If $\lambda \in \FF_p$ 
satisfies $1 + 2\lambda \neq 0$,  then
\begin{align*}
\pFFq{2}{1}{\eta_3 & \eta_3^2}{& 1}{1-\lambda^3} 
=
\pFFq{2}{1}{\eta_3 & \eta_3^2}{& 1}{\left(\frac{1- \lambda}{1 + 2\lambda}\right)^3}.
\end{align*}
\end{theorem}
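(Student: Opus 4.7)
The plan is to derive this identity from the classical Borwein transformation via Theorem~\ref{T:compare}. Let $\mc{H} := \mc{H}^{\mr{can}}(1, 1; \eta_3, \eta_3^2)$ be the canonical hypergeometric sheaf of type $(2,2)$, extended to $\PP^1$ by $j_{\ast}$. By Proposition~\ref{P:monograph}, its Frobenius trace satisfies $t_{\mc{H}}(z) = C_0 \cdot \pFFq{2}{1}{\eta_3 & \eta_3^2}{& 1}{z}$ for a fixed nonzero constant $C_0$ built from Gauss and Jacobi sums in $\eta_3$, independent of $z$. Its local monodromies are a non-trivial unipotent Jordan block at $0$, another at $1$ (a pseudoreflection of determinant $\eta_3 \cdot \eta_3^2 = 1$), and $\mr{diag}(\eta_3^2, \eta_3)$ at $\infty$.

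The two degree-three covers $f(x) = 1 - x^3$ and $g(x) = \left(\frac{1-x}{1+2x}\right)^3$ would be analyzed via their ramification data: $f^{-1}(0) = \{1,\omega,\omega^2\}$ (unramified), and $f^{-1}(1) = \{0\}$, $f^{-1}(\infty) = \{\infty\}$ are triply ramified; symmetrically $g^{-1}(0) = \{1\}$, $g^{-1}(\infty) = \{-1/2\}$ are triply ramified, while $g^{-1}(1) = \{0,\omega,\omega^2\}$ is unramified. The key cancellation is that at the triply-ramified preimages of $\infty$, the pullback monodromy is $\mr{diag}(\eta_3^2, \eta_3)^3 = I$, so both $f^{\ast}\mc{H}$ and $g^{\ast}\mc{H}$ are lisse at $\infty$ and $-1/2$ respectively. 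Elsewhere the local monodromy is the cube of a non-trivial unipotent Jordan block, still non-trivially unipotent. Hence both pullbacks are lisse on $U := \PP^1 \setminus \{0, 1, \omega, \omega^2\}$ with non-trivial unipotent monodromy at each of the four punctures.

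At the complex-analytic level, the classical Borwein identity exhibits a particular solution on the $f$-side as $\frac{3}{1+2x}$ times a particular solution on the $g$-side. Since $3/(1+2x)$ is a single-valued rational function on $U$, this factor only rescales sections of a trivial rank-one sheaf and does not alter monodromy; thus the $\CC$-local systems $(f^{\ast}\mc{H})_B$ and $(g^{\ast}\mc{H})_B$ on $U^{\mr{an}}$ are isomorphic. Both pullbacks are geometrically irreducible (inherited from $\mc{H}$; the four local unipotent monodromies have no common invariant line), so Theorem~\ref{T:compare} supplies a continuous Galois character $\chi : \mr{Gal}(\bar{F}/F) \to \qlbar^{\times}$ with $F = \QQ(\zeta_3)$ such that $f^{\ast}\mc{H}_{\eta} \cong g^{\ast}\mc{H}_{\eta} \otimes \chi$.

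The main obstacle is to verify $\chi = 1$, which is what makes the finite-field identity hold without a twist. Since both sheaves are pure of the same weight, $\chi$ takes values in roots of unity. One strategy is a determinant comparison using \cite[Thm.~8.12.2]{Katz90}: compute $\det(f^{\ast}\mc{H})$ and $\det(g^{\ast}\mc{H})$ explicitly and show they coincide, forcing $\chi^2 = 1$; then rule out the sign by evaluating the trace at a single test point. Alternatively, one can evaluate both trace functions at a convenient auxiliary $\lambda_0 \in \FF_p$ (e.g.\ $\lambda_0 = 2$, giving $f(2) = -7$ and $g(2) = -1/125$, both regular for all large $p$), and verify $t_{\mc{H}}(-7)/t_{\mc{H}}(-1/125) = 1$ via a Gauss--Jacobi sum manipulation. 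Once $\chi = 1$ is established, the Frobenius-trace equality $t_{\mc{H}} \circ f = t_{\mc{H}} \circ g$ divides through by $C_0$ to yield the claimed $\pFFq$ identity.
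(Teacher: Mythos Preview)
The paper does not give a self-contained proof of this statement: it cites \cite{FLRST} and then sketches the underlying geometry. That geometric sketch is the real point of comparison. The paper identifies $\pFq{2}{1}{1/3,2/3}{,1}{t}$ with the period of a family $C: y^2+xy+(t/27)y=x^3$ of elliptic curves with a rational $3$-torsion point over $M^0_3$, and observes that dividing $q^*C$ by the subgroup generated by $(0,0)$ produces $p^*C$, where $p(z)=1-z^3$ and $q(z)=\bigl((1-z)/(1+2z)\bigr)^3$. An isogeny of abelian schemes induces an honest isomorphism of the rational $\ell$-adic $R^1$ sheaves, so one gets $p^*\mc{H}\cong q^*\mc{H}$ \emph{on the nose}, with no Galois twist to chase. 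The factor $3/(1+2x)$ in the analytic identity is absorbed by the action of the isogeny on holomorphic $1$-forms; it never enters the $\ell$-adic picture.

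Your route through Theorem~\ref{T:compare} is a reasonable alternative in spirit, but it has a genuine gap precisely where the isogeny argument is strongest. Theorem~\ref{T:compare} only yields $f^*\mc{H}\cong g^*\mc{H}\otimes\chi$, and you correctly flag that pinning down $\chi=1$ is ``the main obstacle''---but you do not actually do it. Your two proposed strategies (determinant comparison, single test-point) are plausible, yet neither is carried out, and the test-point method in particular would require a nontrivial Jacobi-sum identity that you do not supply. Without this step the proof is incomplete: the entire content of the theorem, as opposed to a twisted version, lies in the absence of an extra constant.

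There are also two softer issues. First, the passage from the single Borwein identity to an isomorphism of rank-$2$ local systems needs the irreducibility you invoke only afterwards; you should analytically continue the one known solution around loops and use irreducibility to see that its monodromy orbit spans. Second, ``inherited from $\mc{H}$'' is not a proof of geometric irreducibility of the pullbacks: pullback along a finite cover can render an irreducible local system reducible. Here the local systems are not rigid (four unipotent local monodromies give rigidity index $0$), so irreducibility genuinely needs an argument---the cleanest being, again, that these are $H^1$ of families of elliptic curves. In short, your framework is sound, but the geometric isogeny the paper points to is what actually closes the gap you leave open.
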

This has the geometric meaning as follows. Let 
$M^0 _3$ be the coarse moduli space for
$\Gamma _0 (3)$. This group is an arithmetic triangle group with two cusps and one elliptic 
point of order 3. The elliptic point means that
there is no universal elliptic curve for 
$\Gamma _0 (3)$, but the family of curves
\[
C: y^2 + x y + (1/27)t y = x^3
\]
is a curve with a rational 3 -torsion point, 
namely $(0, 0)$, which has the correct $j$-invariant. Here $t$ is a Hauptmodul for 
$\Gamma _0 (3)$. Let $M_3$ be the moduli space for 
$\Gamma _3$, the principal congruence subgroup of level 3. The projection $M_3 \to M^0_{3}$
is $z \mapsto z^3$ in suitable coordinates.
Let 
\[
p(z) = 1-z^3, \quad q(z) = \left (\frac{1-z}{1+2z}\right )^3
\]
Then, there is an isogeny $q^* C \equiv p^* C$. In fact, dividing the left-hand side by the subgroup
generated by $(0, 0)$ gives the right-hand side. 
The map
\[
z(\tau) \mapsto \frac{1-z}{1+2z}(3\tau)
\]
is induced by the Atkin-Lehner involution $W_3$.
\[
\begin{diagram}
&& M_3   & & \\
& p\ \ \ \  \ldTo\ & & \rdTo\ \ \ \ \ \  q\\
M^0_{3} & & & & M^0_{3} 
\end{diagram}
\]

This transformation can be deduced from another
transformation of a 2-variable hypergeometric, as will be explained in the next section.

\section{The Picard family of curves}
\label{S:picard}
We finish with one example of a two-variable Appell-Lauricella equation. The treatment here is 
only a sketch, with full details to appear elsewhere. Unexplained notation 
is taken from the quoted papers.

The family of quartic curves
\[
y^3 = x(1-x)(1-\lambda x)(1-\mu x)
\]
depending on parameters $\lambda, \mu$ with $\lambda \neq 0, 1$, $\mu \neq 0, 1$, 
$\lambda \neq \mu$ is a family of genus 3 curves whose Jacobian varieties 
have endomorphism rings containing $\ZZ[\omega]$, where
$\omega = \exp(2 \pi i/3)$. These were first studied 
by Picard, see \cite{picard}, \cite{picard2},  \cite{Holz1}, \cite{Holz2}, \cite{Shiga}. The space

\[
\CC ^2 -  \{\lambda = 0, \lambda = 1, \mu = 0, \mu = 1, \lambda = \mu \} 
= \Gamma \backslash \mathbb{B}_2
\]
is the set of $\CC$-points of the Shimura variety of PEL type representing principally polarized abelian 3-folds with an 
embedding of $\ZZ[\omega]$ into the endomorphism algebra. Here $ \mathbb{B}_2\subset \CC^2$
is the open unit ball, and $\Gamma \subset \mathrm{SU}(2, 1;\ZZ[\omega])$ is 
the congruence subgroup of the points of unitary group with signature $(2, 1)$ with coordinates in the 
Eisenstein integers which satisfy 
\[
\gamma \equiv 1 \mr{\ mod\ } (1 - \omega).
\]
The Jacobians of the Picard family form the universal family of abelian varieties over this space. 

The periods of integrals in this family satisfy an Appell-Lauricella differential equation
(\cite{appell}, \cite{appell2}, \cite{appell-kampe}). Remarkably, 
this was shown by Picard, who also computed the monodromy, in effect
discovering the Picard-Lefschetz formula. He observed that the 
monodromy preserved a Hermitian form of signature $(2, 1)$. 

 In \cite{koike-shiga1}, Koike and Shiga studied Appell's $F_1$-hypergeometric function in two variables to establish a new three-term arithmetic geometric mean result (AGM), related to Picard modular forms.  

Let $x,y \in \mathbb{C}$, and let $\omega$ be a primitive cubic root of unity.  Then
\begin{theorem}\label{cubic} 
\begin{multline}\label{F1-cubic}
F_1\left[ \frac{1}{3};\, \frac{1}{3},\,  \frac{1}{3};\, 1 \, \Big|\, 1-x^3,\,1-y^3\right]  \\
=
\frac{3}{1+x+y} \
F_1\left[ \frac{1}{3};\, \frac{1}{3},\, \frac{1}{3};\,1 \, \Big|\,
	\left(\frac{1+\omega x + \omega^2 y}{1 + x + y}\right)^3,\,
	\left(\frac{1+\omega^2 x + \omega y}{1 + x + y}\right)^3\, \right].
\end{multline}
\end{theorem}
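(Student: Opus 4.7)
The plan is to interpret both sides of \eqref{F1-cubic} as periods of the Picard family of curves $C_{\lambda,\mu}: y^3 = x(1-x)(1-\lambda x)(1-\mu x)$ and to establish an isogeny between two pulled-back families, in direct analogy with the proof of Borwein's cubic transformation sketched in Section~\ref{S:exelliptic}. By the cycloelliptic cohomology analysis of Section~\ref{S:cyclo} (in the two-parameter Appell--Lauricella specialization of Section~\ref{S:AL}), the function $F_1[\tfrac13;\tfrac13,\tfrac13;1\,|\,\lambda,\mu]$ is, up to a Beta-function factor, a period of the differential $\omega = dx/y$ on $C_{\lambda,\mu}$ restricted to the $\chi$-eigenspace of the $\mu_3$-action, where $\chi$ is a fixed primitive cubic character. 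The resulting rank-$3$ Appell--Lauricella $\mathcal{D}$-module $\mathcal{H}^{\chi}$ on the base $B = \{(\lambda,\mu) \in \CC^2 : \lambda(\lambda-1)\mu(\mu-1)(\lambda-\mu)\neq 0\}$ is the de Rham realization of the Picard motive $\mathcal{P}[(1,1,1,1,1)/3,\chi]$ of Definition~\ref{D:hypmot}.

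Next, I would introduce the two maps from the $(x,y)$-plane $\widetilde{B}$ (with the appropriate branch divisors removed) to $B$,
\[
p(x,y) = (1-x^3,\ 1-y^3), \qquad
q(x,y) = \left(\left(\frac{1+\omega x + \omega^2 y}{1+x+y}\right)^{\!3},\ \left(\frac{1+\omega^2 x + \omega y}{1+x+y}\right)^{\!3}\right),
\]
and claim the key geometric fact: there exists an isogeny $\varphi : p^{\ast}J(C) \to q^{\ast}J(C)$ of the associated Picard Jacobians over $\widetilde{B}$, compatible with the $\ZZ[\omega]$-action. In the one-variable Borwein setting, the analogous isogeny was the quotient of an elliptic curve by the cyclic subgroup generated by a distinguished $3$-torsion point, and it corresponded to the Atkin--Lehner involution $W_3$ on $X_0(3)$. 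Here the expected construction is to quotient $J(p^\ast C)$ by an appropriate $\ZZ[\omega]$-stable subgroup of $(1-\omega)$-torsion; geometrically this realizes an Atkin--Lehner-type correspondence on the Picard modular surface $\Gamma\backslash\mathbb{B}_2 \subset \mathrm{SU}(2,1;\ZZ[\omega])\backslash \mathbb{B}_2$, corresponding to the multiplication $\tau \mapsto (1-\omega)\tau$ on $\mathbb{B}_2$ studied by Koike--Shiga \cite{koike-shiga1}.

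Granting the isogeny, the induced isomorphism of motivic sheaves $p^{\ast}\mathcal{H}^{\chi} \cong q^{\ast}\mathcal{H}^{\chi}$ on $\widetilde{B}$ (possibly up to a Kummer-type twist by a character of $\mu_3$, controlled by Theorem~\ref{T:compare} in the $\ell$-adic setting and by the Riemann--Hilbert correspondence on the de~Rham side) yields an identity of flat sections. The prefactor $3/(1+x+y)$ is then precisely the ratio $\varphi^{\ast}(p^{\ast}\omega)/(q^{\ast}\omega)$ of invariant $(1,0)$-forms computed from the explicit formulas for $\varphi$; it is the two-variable analog of the factor $3/(1+2x)$ in Borweins' identity \eqref{2F1-cubic} and of the factor $(1+k)$ in the AGM transformation. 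The normalization constant $3$ is pinned down by specializing at a diagonal point, for instance $x=y$, where $F_1$ degenerates to $\phantom{}_2F_1$ and \eqref{F1-cubic} reduces to Borweins' one-variable cubic identity.

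The main obstacle is the explicit construction of $\varphi$ and the verification that its effect on the base is exactly the rational map exhibited by $q \circ p^{-1}$. Unlike the elliptic case, the Picard Jacobian is a three-dimensional abelian variety with $\ZZ[\omega]$-multiplication and polarization of signature $(2,1)$, so one must single out the correct $(1-\omega)$-stable order-$3$ or order-$9$ subgroup of $3$-torsion. A plausible route is through the period matrix description of Picard's uniformization (\cite{picard}, \cite{Holz1}) and Koike--Shiga's explicit theta-function description \cite{koike-shiga1}: identify $\tau \mapsto (1-\omega)\tau$ on $\mathbb{B}_2$, translate back via the Abel--Jacobi map to a correspondence on the level of Picard curves, and then compute the ratio of invariant differentials directly. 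Once $\varphi$ is in hand, the remaining steps are a matter of Galois descent for the twist and a single normalization check, as in Section~\ref{SS:exrig2}.
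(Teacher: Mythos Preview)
Your plan matches the paper's: it invokes the Koike--Shiga isogeny $a: q^*\mathcal{A}\to p^*\mathcal{A}$ of degree $27$ between pulled-back Picard Jacobians (deduced in \cite{koike-shiga1} from theta-function identities), observes that $F_1$ is a period of the Picard family, and concludes the identity of flat sections with the prefactor arising from the ratio of invariant differentials. One small correction to your speculation: the covering map on $\mathbb{B}_2$ is the non-uniform $(u,v)\mapsto(\sqrt{-3}\,u,\,3v)$ rather than scalar multiplication by $1-\omega$, and the isogeny has degree $27$ rather than $3$ or $9$.
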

The following is a finite-field analog of the formula 
of Koike and Shiga, proved in \cite{FST}.
\begin{theorem}\label{finiteF1-cubic} 
Let $p\equiv 1 \pmod{3}$ be prime, let $\omega$ be a primitive cubic root of unity, and let $\eta_3$ be a primitive cubic character in $\widehat{\mathbb{F}_p^\times}$.  If $\lambda,\mu \in \FF_p$ satisfy $1 + \lambda + \mu \neq 0$,  then
\begin{align*}
\FAFn{2}{\eta_3}{\eta_3 &  \eta_3}{1}{1-\lambda^3, 1-\mu^3} 
=
\FAFn{2}{\eta_3}{\eta_3 &  \eta_3}{1}
	{\left(\frac{1+\omega \lambda + \omega^2 \mu}{1 + \lambda + \mu}\right)^3, \left(\frac{1+\omega^2 \lambda + \omega \mu}{1 + \lambda + \mu}\right)^3\, }. 
\end{align*}
\end{theorem}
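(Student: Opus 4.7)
The plan is to deduce Theorem \ref{finiteF1-cubic} from the complex-analytic Koike-Shiga formula (Theorem \ref{cubic}) via the Comparison Theorem \ref{T:compare}, by exhibiting both sides as Frobenius traces of a single $\ell$-adic hypergeometric sheaf on the parameter space of the Picard family. Take $N = 3$, the primitive cubic character $\chi = \eta_3$ on $\mu_3$, and the multiplicity vector $\mbf{i} = (1,1,1,1,1)$ attached to the five branch points $0, 1, 1/\lambda, 1/\mu, \infty$ of the Picard curve $y^3 = x(1-x)(1-\lambda x)(1-\mu x)$. By Definition \ref{D:hypmot} and Theorem \ref{T:main}, the resulting motive $\mathcal{P}[\mbf{i}/3, \eta_3]$ admits an $\ell$-adic realization $\mathcal{P}[\mbf{i}/3, \eta_3]_{\ell}$, lisse of rank $r+1 = 3$ on the open subset $U \subset \Aff^2_{(\lambda,\mu)}$ where the five branch points are distinct, and its Frobenius traces at a point $(\lambda,\mu) \in U(\FF_q)$ equal, up to an explicit Jacobi-sum factor parallel to the one in Proposition \ref{P:monograph}, the finite-field Appell-Lauricella function $\FAFn{2}{\eta_3}{\eta_3 & \eta_3}{1}{\lambda, \mu}$. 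I would first carry out this identification, using the same inner/outer Fourier expansion of the defining character sum applied in Section \ref{SS:charshv4} to the ${}_2F_1$ case.

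Next I would apply the Comparison Theorem to the two rational maps
\[
\Phi_1(\lambda, \mu) = (1 - \lambda^3,\, 1 - \mu^3), \qquad
\Phi_2(\lambda, \mu) = \left( \left( \tfrac{1 + \omega\lambda + \omega^2\mu}{1+\lambda+\mu} \right)^3,\, \left( \tfrac{1 + \omega^2\lambda + \omega\mu}{1+\lambda+\mu} \right)^3 \right)
\]
viewed as partially defined maps $U \to U$. Theorem \ref{cubic} of Koike-Shiga is a scalar identity of the two Appell $F_1$-periods; by the Riemann-Hilbert correspondence this asserts that, on the open subset of $U^{\mr{an}}$ where both maps are regular and land in $U^{\mr{an}}$, the pullbacks under $\Phi_1$ and $\Phi_2$ of the analytic period local system on $U^{\mr{an}}$ are isomorphic, equivalently the corresponding $\mathcal{D}$-modules are isomorphic. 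The Appell-Lauricella sheaf here is geometrically irreducible, since Picard's classical monodromy calculation puts the image in $\mr{SU}(2,1;\ZZ[\omega])$ with dense image, so Theorem \ref{T:compare} applied over $F = \QQ(\omega)$ yields a continuous character $\psi : \gal(\ol{F}/F) \to \qlbar^{\times}$ with
\[
\Phi_2^* \mathcal{P}[\mbf{i}/3, \eta_3]_{\ell} \;\cong\; \Phi_1^* \mathcal{P}[\mbf{i}/3, \eta_3]_{\ell} \otimes \psi.
\]
Equating Frobenius traces at $(\lambda, \mu) \in U(\FF_p)$ yields exactly the identity claimed in Theorem \ref{finiteF1-cubic}, multiplied by the scalar $\psi(\mr{Frob}_p)$.

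The main obstacle is to show that this Galois character $\psi$ is trivial, so that no spurious constant appears on the right-hand side. I propose two complementary attacks. The geometric attack realizes $\Phi_1, \Phi_2$ as the pair of modular maps on the Picard ball quotient attached to the Koike-Shiga three-term AGM iteration, namely as an isogeny of principally polarized abelian threefolds with $\ZZ[\omega]$-multiplication obtained by quotienting by a specific $\ZZ[\omega]$-stable subgroup of order a power of $1 - \omega$; since such an isogeny is defined over $\QQ(\omega)$, the induced isomorphism of $\ell$-adic realizations carries a trivial Galois twist, forcing $\psi = 1$. The more pedestrian attack is a direct specialization, for instance along the diagonal $\lambda = \mu$, where the two-variable identity degenerates (using $1 + \omega + \omega^2 = 0$) to the one-variable Borwein cubic transform in Theorem \ref{finite2F1-cubic}; because the twist constant in that one-variable identity is already known to be $1$, and the pointwise identity along a Zariski-dense subvariety determines $\psi$, one concludes $\psi = 1$ in general. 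The hardest step is making the isogeny construction rigorous at the level of motives across the boundary of $U$ where the Picard curve acquires nodes; once that is settled, the character identity holds pointwise on $U(\FF_p)$ and Theorem \ref{finiteF1-cubic} follows.
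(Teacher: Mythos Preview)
Your approach is different from the one the paper attributes to \cite{FST}. There the proof is direct and elementary: one checks that a truncation of the Appell--Lauricella series computes the Hasse invariant of the Picard curve, so the two sides of the identity are (up to a common normalization) the mod~$p$ point counts of the curves $y^3=x(1-x)(1-\Phi_1(\lambda,\mu)_1 x)(1-\Phi_1(\lambda,\mu)_2 x)$ and its $\Phi_2$-counterpart; these Jacobians are related by the Koike--Shiga isogeny of degree $27$, hence have the same point count over $\FF_p$. No Comparison Theorem and no twist-killing are needed. The paper then remarks that the same conclusion should follow by working with the integral model of the PEL Shimura variety over $\ZZ[\omega,1/3]$, which is your route (a).

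Your route (a) is correct, but notice that once you invoke the Koike--Shiga isogeny as a morphism of abelian schemes over $R_3$, it already induces a $\gal(\bar F/F)$-equivariant isomorphism $\Phi_2^{*}\mathcal P_\ell\cong \Phi_1^{*}\mathcal P_\ell$; the detour through Theorem~\ref{T:compare} and the subsequent removal of $\psi$ become superfluous. Your route (b), however, does not work as written. The diagonal $\lambda=\mu$ lies in the boundary of $U$: the two branch points $1/\lambda$ and $1/\mu$ coalesce, the Picard curve degenerates, and neither $\Phi_1^{*}\mathcal P_\ell$ nor $\Phi_2^{*}\mathcal P_\ell$ is defined there (both $\Phi_i$ send the diagonal into the locus $\{x=y\}\subset\partial U$). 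So you cannot evaluate the sheaf identity on the diagonal to read off $\psi(\mr{Frob}_p)$, and the diagonal is in any case not Zariski-dense. To pin down $\psi$ by specialization you would have to evaluate both sides at a point genuinely in $U(\FF_p)$, which is essentially the direct computation of \cite{FST}.

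One small slip: in the paper's conventions the vector $\mbf{i}$ records only the exponents at the \emph{finite} branch points $0,1,1/\lambda,1/\mu$, so $\mbf{i}=(1,1,1,1)$, giving $r=2$ and rank $r+1=3$ as you correctly state; the point at $\infty$ is handled by the condition $\sum i_j\not\equiv 0\pmod N$ and should not appear in $\mbf{i}$.
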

The key observation in \cite{FST} is that a truncation of the Appell-Lauricella series gives 
the Hasse invariant of the Picard curve. The equality of the finite field analog reduces to 
comparing the number of rational points of the 
two transforms of the Picard curves. 

The main result of \cite{koike-shiga1}
can be formulated as follows: There is a 
diagram
\[
\begin{diagram}
&& M_{\theta}   & & \\
& p\ \ \ \  \ldTo\ & & \rdTo\ \ \ \ \ \  q\\
M_{\sqrt{-3}} & & & & M_{\sqrt{-3}} 
\end{diagram}
\] 
where $M_{\sqrt{-3}}$ is a compactification of the  PEL Shimura variety 
attached to the group
\[
\Gamma (\sqrt{-3}) =\{ 
\gamma \in 
\mr{SU}(2, 1; \Z[\omega]) \mid
\gamma \equiv\  1 \ \mr{  mod\ }
(1 - \omega)
\}.
\]
 An open subset $M ^{\circ}_{\sqrt{-3}}$
 of the $\CC$-points of this is the quotient
 $\Gamma (\sqrt{-3})\backslash \mathbb{B}_2$. In fact 
 $M _{\sqrt{-3}}  = \mathbb {P}^2$
with coordinates $\xi _0,\xi _1, \xi_2 $. 
We have $\xi _{\mu} = \theta _{\mu}(u, v)^3$
for certain explicit theta functions 
depending on $(u, v)\in \mathbb{B}^2$.
The rational map 
$\mathbb{P}^2 \to \mathbb{P}^2$ given by 
$(\lambda _0,\lambda _1 , \lambda_2) \to 
(\lambda _0 ^3= \xi _0,\lambda _1 ^3 = \xi _1 ,\lambda _2 ^3 = \xi _2 )$ corresponds 
to a congruence subgroup
\[
\Gamma (\theta) \subset \Gamma (\sqrt{-3})
\]
of index 9. That is, there is a compactification of  
$\Gamma (\theta )\backslash \mathbb{B}_2$
which is $\mathbb{P}^2$ with coordinates $(\lambda _0, \lambda _1, \lambda _2)$. 
This is denoted $ M_{\theta} $ in the diagram above.

The Jacobians of the Picard curve
\[
C(\xi): y^3 = 
x (x-\xi_0)(x-\xi _1)(x-\xi _2)
\]
form the universal abelian variety 
$\mathcal{A} (\xi)$ over an open set 
$M_{\sqrt{-3}} ^{\circ}\subset M_{\sqrt{-3}}$. 
We can write this family as 
$\mathcal{A} (u, v)$ to emphasize its dependence on $(u, v) \in \mathbb{B}_2$.

One of the main results of \cite{koike-shiga1} is 

\begin{theorem}
There is an 
isogeny of degree 27, 
$a: q^* \mathcal{A} \to p^* \mathcal{A}$ covering  the map 
$(u, v)\mapsto (\sqrt{-3}u, 3 v)$ of $ \mathbb{B}_2$.

In affine coordinates $x = \xi _1 /\xi_0, y = \xi_2/\xi_0$ and
$w = \lambda _1/\lambda _0, z = \lambda _2/\lambda _0$ the maps are given by 
\begin{align*}
(x, y) = p(w, z) &= (1-w^3, 1-z^3) \\ 
(x, y) = q(w, z) &= \left (
\left (\frac{1 + \omega w + \omega ^2 z}
{1+ w + z}\right )^3, 
\left (\frac{1 + \omega^2 w + \omega z}
{1+ w + z}\right ) ^3
\right )
\end{align*}
\end{theorem}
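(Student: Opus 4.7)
I would prove this in three stages, going from the abstract construction of the isogeny to the explicit cubic formulas for $p$ and $q$.

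\emph{Stage 1 (abstract isogeny and its degree).} The universal $\mathcal{A}$ is a principally polarized abelian $3$-fold carrying an embedding $\iota:\ZZ[\omega]\hookrightarrow \mathrm{End}(\mathcal{A})$ of signature $(2,1)$. I take $a$ to be, fiberwise, the $(1-\omega)$-isogeny $\mathcal{A}(u,v)\to\mathcal{A}(u,v)/\mathcal{A}(u,v)[1-\omega]$; its degree equals $N(1-\omega)^{\dim\mathcal{A}}=3^3=27$.

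\emph{Stage 2 (identification with the ball map).} I would verify that the target $\mathcal{A}(u,v)/\mathcal{A}(u,v)[1-\omega]$ is again a member of the family, corresponding to the point $(\sqrt{-3}u,3v)\in\mathbb{B}_2$ (modulo $\Gamma(\sqrt{-3})$). On the universal cover, multiplication by $1-\omega$ on the complex period lattice descends to a linear endomorphism of the tangent space $T_0\mathcal{A}=T^+\oplus T^-$, where $T^+$ is the $2$-dimensional $\omega$-eigenspace and $T^-$ the $1$-dimensional $\omega^2$-eigenspace of $\iota(\omega)$. On $T^+$ it acts as $1-\omega$ and on $T^-$ as $1-\omega^2=\overline{1-\omega}$. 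Since $u$ encodes (up to normalization) a linear period ratio between $T^+$ and $T^-$, while $v$ encodes a determinantal period pairing of $\det T^+$ with $T^-$, a direct computation in the $\mathrm{SU}(2,1)/\mathrm{U}(2)$-model of $\mathbb{B}_2$ yields the rescaling $(u,v)\mapsto(\sqrt{-3}u,3v)$: the factor $\sqrt{-3}$ in the $u$-coordinate comes from the ratio of eigenvalues on $T^+$ versus $T^-$ (after absorbing a unit of $\ZZ[\omega]^{\times}$ into the $\Gamma(\sqrt{-3})$-action), and the factor $3=N(1-\omega)$ in the $v$-coordinate comes from the determinantal pairing.

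\emph{Stage 3 (explicit formulas for $p$ and $q$).} The two legs of the resulting $(1-\omega)$-Hecke correspondence on $M_{\sqrt{-3}}$ lift uniquely to $M_{\theta}$ once one fixes the $\Gamma(\theta)$-level structure. To produce closed forms, I would use the Picard theta-null functions $\theta_{\mu}(u,v)$ on the ball (identified with $\lambda_{\mu}$ at the $M_{\theta}$ level, so $\xi_{\mu}=\theta_{\mu}^{3}$) together with the classical transformation rule for $\theta_{\mu}$ under $(u,v)\mapsto(\sqrt{-3}u,3v)$. This rule, a cubic Riemann-type theta identity established in Koike--Shiga \cite{koike-shiga1}, produces exactly the symmetric combinations $1+w+z$, $1+\omega w+\omega^{2}z$ and $1+\omega^{2}w+\omega z$ that appear in the statement. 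Substituting it into $p^{\ast}\xi_{\mu}=\xi_{\mu}(u,v)$ and $q^{\ast}\xi_{\mu}=\xi_{\mu}(\sqrt{-3}u,3v)$, and passing to the affine coordinates $(w,z)=(\lambda_{1}/\lambda_{0},\lambda_{2}/\lambda_{0})$, recovers the advertised formulas $p(w,z)=(1-w^{3},1-z^{3})$ and the displayed expression for $q(w,z)$.

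\emph{Main obstacle.} The delicate bookkeeping is in Stage 3. One must track how the Galois group $\Gamma(\sqrt{-3})/\Gamma(\theta)\cong(\ZZ/3)^{2}$ of the cover permutes the nine lifts of a point of $M_{\sqrt{-3}}$ to $M_{\theta}$, and verify that this action matches the fiber swap of the Hecke correspondence so that the chosen cube roots are compatible with both legs simultaneously. Stages 1 and 2 are essentially formal consequences of CM theory and the period-domain realization of $\mathbb{B}_{2}$, but the passage to explicit cubic rational functions in Stage 3 rests on the precise form of the Koike--Shiga cubic theta identity, and on fixing conventions so that the $\omega$- and $\omega^{2}$-twisted linear combinations in the numerators of $q(w,z)$ emerge with the correct powers of $\omega$.
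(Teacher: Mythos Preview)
The paper does not give its own proof of this theorem: it is stated as ``one of the main results of \cite{koike-shiga1}'' and the only proof indication is the sentence immediately following the statement, ``This is their isogeny formula, which is deduced from transformation properties of theta functions.'' So there is no detailed argument in the paper to compare against; the paper simply points to the Koike--Shiga theta identities.

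Your Stage~3 is exactly this: you invoke the Koike--Shiga cubic theta transformation law for the $\theta_\mu(u,v)$ under $(u,v)\mapsto(\sqrt{-3}u,3v)$ and read off the formulas for $p$ and $q$. In that sense your proposal agrees with, and is more explicit than, the paper's one-line remark. Your Stages~1 and~2 add a conceptual layer the paper does not spell out, namely identifying the isogeny as the $(1-\omega)$-isogeny and explaining the rescaling $(u,v)\mapsto(\sqrt{-3}u,3v)$ via the eigenspace decomposition of $T_0\mathcal{A}$. This is a reasonable heuristic and gives the correct degree $N(1-\omega)^3=27$, but as you yourself flag, the honest content of the proof lives in Stage~3: the precise matching of cube-root choices and the $(\ZZ/3)^2$ deck action with the two legs of the correspondence is exactly the computation Koike--Shiga carry out with their theta functions, and Stages~1--2 do not by themselves pin down the explicit rational functions. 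So your plan is sound, with the understanding that Stage~3 is where the actual work is, and that work is the cited Koike--Shiga result rather than something done in this paper.
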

This is their isogeny formula, which is deduced
from transformation properties of theta functions. Since the Appell-Lauricella 
differential equation is the DE for the periods of the Picard family, this isogeny
formula essentially proves \ref{cubic}. The entire picture above should be valid
as schemes over $\ZZ[\omega, 1/3]$ and for that reason, taking 
$\ell$-adic coefficients, one obtains theorem \ref{finiteF1-cubic}. For this one needs the theory of compactified 
PEL Shimura varieties over integer rings. 
Also note 
that this gives a transformation formula 
for the corresponding $p$-adic Appell-Lauricella, via the machinery of 
crystalline cohomology.

\section{Appendix: Local systems }
\label{S:app1}

\subsection{ $\CC$-local systems and differential equations}
\label{SS:app1}

If $X$ is a nonsingular algebraic variety over $\CC$ we let $X^{\mr{an}} = 
X(\CC)$ be the set of complex points with the classical topology. For simplicity, assume
that $ X^{\mr{an}}$ is connected.

Recall the following dictionary:
The  following categories are equivalent:

\begin{itemize}
\item[D1.] Local systems of finite-dimensional $\CC$-vector spaces {\sf V }on 
 $ X^{\mr{an}}$.
 \item[D2.] Representations $\rho : \pi _1 (X^{\mr{an}},  x) \to \mr{GL}_n(V)$
on finite-dimensional $\CC$-vector spaces $V$.
\item[D3.] Holomorphic integrable connections 
 \[
\nabla : \mc{V}^{an} \to \Omega^ 1 _{X^{\mr{an}}/\CC} \otimes _{\mc{O}_X^{\mr{an}}} \mc{V}^{an}
\] 
where $\mc{V}\an$ is  a locally free $\mc{O}_X^{\mr{an}}$ sheaf of finite rank.  
 \item[D4.] Integrable algebraic connections 
 \[
\nabla : \mc{V}\to \Omega^ 1 _{X/\CC} \otimes _{\mc{O}_{X}} \mc{V}
\] 
where $\mc{V}$ is  a locally free $\mc{O}_X$ sheaf of finite rank, and which have regular 
singular points ``at infinity''.   
 \end{itemize}
Some comments:
\begin{itemize}
\item[1.] The {\it morphisms} in each of these categories are the obvious ones. 
\item[2.] The integrability condition is that the composed map (curvature) 
\[
 \mc{V}\to \Omega^ 1 _{X/\CC} \otimes _{\mc{O}_{X}} \mc{V}\to
  \Omega^ 1 _{X/\CC} \otimes _{\mc{O}_{X}}   \Omega^ 1 _{X/\CC} \otimes _{\mc{O}_{X}} \mc{V} 
  \to    \Omega^ 2 _{X/\CC}      \otimes _{\mc{O}_{X}}      \mc{V}  \]
  is 0. We then get both algebraic and holomorphic deRham complexes
  \[
 \Omega^ {*} _{X/\C} \otimes _{\mc{O}_{X}} \mc{V}. \]
 In the analytic case, this deRham complex is a resolution of the sheaf ${\sf V}$,
 by the holomorphic Poincar\'e lemma. 
 \item[3.] We call these connections differential equations. D3 is essentially due to 
 Frobenius. D4 is called the Riemann-Hilbert correspondence.
 
 \item[4.] Regular singular points means this: Let $X \subset \bar{X}$
 be a compactification such that $\bar{X} - X = D$ is a divisor with normal crossings
 (exists by Hironaka's theorem).  Then there is a locally free sheaf $\bar{\mc{V}}$ on 
 $\bar{X}$ extending $\mc{V}$ and a connection
 \[
 \bar{\nabla} :  \bar{\mc{V}}\to \Omega^ 1 _{X/\CC}(\log D) \otimes _{\mc{O}_{X}} \bar{\mc{V}} \]
 extending $\nabla$. When $\dim X = 1$ this is equivalent to Fuchs' growth conditions
 at singular points of the differential equations.
 
 \item[5.] In the language of $\mc{D}_X$-modules, 
 connection with regular singular points = regular holonomic 
 $\mc{D}_X$-module which is coherent (hence locally free) as
 an $\mc{O}_X$-module. 
\end{itemize}

The functors go like this:

$1\Rightarrow 2$: ${\sf V} \mapsto {\sf V}_x$ which is a  $\pi _1 (X^{\mr{an}},  x)$-module. 

$1\Rightarrow 3$: ${\sf V}\mapsto \mc{V} = {\sf V}\otimes _{\underline{\CC}}\mc{O}_{X\an}$
with connection $\nabla = 1 \otimes d$.

$4\Rightarrow 3$: $ \mc{V} \mapsto \mc{V} \otimes _{\mc{O}_X}\mc{O}_{X\an} = \mc{V}\an$, 
with the obvious connection.
 A proof of the regularity theorem can be found in \cite{DelDE}.

\subsection{ $\ell$-adic local systems}
\label{SS:app2}

A reference: \cite{DelWeil2}. Fix  a prime number $\ell$.  In this section: scheme = 
a separated noetherian scheme on which $\ell$ is invertible. 
 We are interested in constructible $\bar{\QQ}_{\ell}$-sheaves on $X$, in particular, 
 those that are lisse. In this section: the \'etale topology is understood.
 
 An $\ell$-adic representation of a profinite group $\pi$ on a  $\bar{\QQ}_{\ell}$-vector 
 space $V$  is a homomorphism $$\sigma : \pi \to \mr{GL}(V)$$ such that 
 there is a finite subextension $E/\QQ _{\ell}$ and an $E$-structure $V_E$ on $V$
 such that $\sigma $ factorizes in a continuous homomorphism $\pi \to \mr{GL}(V_E)$.

  Recall that a geometric point $\bar{x}$ of a scheme $X$ is a morphism of the 
  spectrum of an algebraically closed field denoted $k(\bar{x})$. It is localized 
  in $x \in X$ if its image is $x$. 
  
  If $X$ is connected and pointed by a geometric point $\bar{x}$, the functor
  \[
  \mc{F} \mapsto \text{the \ } \pi_1 (X, \bar{x}) -\text{\ module\ } \mc{F}_{\bar{x}}
  \]
  is an equivalence of categories between the categories of
  \begin{itemize}
 \item[1.] lisse  $\bar{\QQ}_{\ell}$-sheaves on $X$;
 \item[2.] $\ell$-adic representations of $\pi _1(X, \bar{x})$.
   \end{itemize}
Here   $\pi _1(X, \bar{x})$ is Grothendieck's fundamental group. Especially if $X = \mr{Spec}(k)$ is a field, the category of lisse  $\bar{\QQ}_{\ell}$-sheaves on $X$ is 
 equivalent to the category of $\ell$-adic representations of $\mr{Gal}(\bar{k}/k)$.

\section{Appendix: Motivic Sheaves }
\label{S:app3} 

For an introduction to the theory of motives, see Andr\'e's book \cite{andre}.
Fix a field $k$. The idea is to construct a rigid $\otimes$-category of mixed 
motives $\mbf{MM} (k)_F$ with coefficients in $F$,  a field of characteristic 0, together 
with realization functors into various cohomology theories (Betti, deRham, 
$\ell$-adic \'etale, $p$-adic crystalline).  The pure objects should constitute 
a semisimple subcategory $\mbf{M}(k)_F$ which is Grothendieck's  category of motives
for numerical equivalence. For a smooth projective variety $X$, an 
idempotent $e \in \mr{Corr}(X)_F$, the correspondence ring of $X$, and an integer $i$, we have an element  
$eh(X)(i)\in M(k)_F$.  We think of $h(X)$ as representing the cohomology of $X$, and the integer $i$ represents a Tate twist. The idempotent 
could be for instance a K\"unneth projector onto a factor $h^j(X)$. In any realization, this becomes
(e.g., for \'etale cohomology)  $eH_{et }(X\otimes _k \bar{k}, \overline{\QQ}_{\ell})(i)$,

What is usually constructed is a triangulated  $\otimes$-category $\mbf{DM}(k)$. The realization functors then map 
to various derived categories (e.g., of $ \overline{\QQ}_{\ell}- \mr{Gal}(\bar{k}/k)$-vector spaces). One hopes 
for a $t$-structure on $\mbf{DM}(k)$ whose heart is $\mbf{MM} (k)_F$, and such that 
$\mbf{DM}(k) = D^b (\mbf{MM} (k)_F)$. The situation today is that there are various constructions of 
$\mbf{DM}(k)$ (Voevodsky, Hanamura, Levine, Nori) but the existence of a $t$-structure is a conjecture.

One extends this construction to motivic sheaves. Given a scheme $S$, there is a  $\otimes $-triangulated category 
$\mbf{DA}(S)$ of motivic sheaves on $S$. For instance, given a morphism $f : X \to S$, one wishes 
for objects $f_{\ast } \QQ_X(0)\in \mbf{DA}(S)$, where $\QQ_X(0) \in \mbf{DA}(X)$ is the unit element. 
Then the cohomological realizations (e.g., for $\ell$-adic cohomology) should be 
$\mbf{R}f_{\ast } \bar{\QQ}_{X, \ell} \in D^b (S, \bar{\QQ}_{\ell})$. When 
$S =\mr{Spec}(k)$, we have $\mbf{DA}(S) =\mbf{DM}(k) $.

There are at least two formalisms of motivic sheaves available, one
closely related to Voevodsky's triangluated category of mixed motives, one related to
Nori's motives, see Arapura's  paper \cite{ara}. See also the papers of Huber, \cite{Huber95}. \cite{Huber95ii}. 
We will follow the exposition in Ayoub's papers, to which we refer the 
reader for further information. This whole theory is built upon Voevodsky's theory of triangulated 
categories of mixed motives, see \cite{morel-voevodsky}.

\subsection{{}\! \! }
\label{SS:app3i}
Let $X$ be a noetherian scheme. 
There is a tensor triangulated category
$\mathbf{DA}(X)$ whose objects will be called \emph{relative motives} over
the scheme $X$. We briefly recall its construction. Let 
$\mr{Sm}/X$ be the category of smooth $X$-schemes of finite type, endowed with 
the \'etale topology. We let $\mbf{Shv}(  \mr{Sm}/X  )$ be the category of 
\'etale sheaves of $\QQ$-vector spaces on  $\mr{Sm}/X $. Given a smooth 
$X$-scheme $Y \to X$, we let $\QQ _{et} (Y) = \QQ _{et} (Y\to X)$ be the \'etale sheaf associated to the 
presheaf defined by 
\[
\QQ (Y) (-) := \QQ (\Hom _{  \mr{Sm}/X }(  -, Y    )   ).
\]
The category $\mathbf{DA}(X)$ is defined in two steps:

\begin{itemize}
\item[1.] The category of effective motives $\mathbf{DA }_{\mr{eff}}(X)$ is defined 
as the Verdier quotient of the derived category $\mbf{D} (\mbf{Shv}(  \mr{Sm}/X  ))$
by the smallest triangulated category closed under infinite sums and containing all
complexes $[\QQ _{et} (\mathbb{A}^1_Y)\to \QQ _{et} (Y)]$. The object $\QQ _{et} (Y) $, 
viewed as an element of $\mathbf{DA }_{\mr{eff}}(X)$, is denoted $\rm{M} _{\mr{eff}}(Y)$.
It is called the effective homological motive associated to $Y\to X$. We denote 
$\rm{M} _{\mr{eff}}(\rm{id}_X: X \to X)$ by  $\un_X$. It is the unit for tensor product
in $\mathbf{DA }_{\mr{eff}}(X)$.

\item[2.] $\mathbf{DA}(X)$ is obtained from $\mathbf{DA }_{\mr{eff}}(X)$ by formally 
inverting the operation $T_X \otimes -$, where $T_X$ is the Tate object. This is defined as
\[
T_X = \mr{ker} \left ( [\QQ _{et} (\mathbb{A}^1_X - o(X)\to X)\to \
\QQ _{et} (\mr{id}_X : X\to X)
\right ),
\]
where $o(X)$ is the zero section of $\mathbb{A}^1_X$. Note that the Tate motive 
is defined as $\QQ _X (1) := T_X [-1] $.  The tensor product on $\DM(X)$  makes it a
closed monoidal symmetric category
with unit object $\un_X$. 
\end{itemize}
It can be shown
that, for $X=\Spec(k)$ the spectrum of a perfect
field, we have an equivalence of categories $\DM(k)\simeq
\mathbf{DM}(k)$, where $\mathbf{DM}(k)$ is Voevodsky's category of mixed
motives with rational coefficients.

There is a variant of the above where the sheaves of $\QQ$-vector spaces are 
replaced by $\Lambda$-modules, notation: $\mathbf{DA}(X,  \Lambda)$. The important case for us is when 
$\QQ \subset \Lambda$. The unit object is also denoted $\Lambda _X (0)$, and the Tate objects
$\Lambda _X (n)$.

In \cite{ayoub-these-I, ayoub-these-II}, it is shown that one has the full machinery
of Grothendieck's six operations on the triangulated
categories $\mathbf{DA}(X)$.  Tensor product and Hom, $\otimes _X$ and $\underline{\Hom}_X$;
\[
\text {\ inverse \ and \ direct \ image: \ }
 f^*, f_{*}, \quad \text{for \ } f: X\to Y
\text {\ a \ morphism \ of \ noetherian \ schemes;}
\]
and 
\[
\text { compact \ supports: \ }
 f_{!}, f^{!}, \quad f: X\to Y
\text {\ a \ quasi-projective\ \ morphism \ of \ noetherian \ schemes,}
\]
as well as nearby and vanishing cycle functors.
Moreover, it was shown in \cite{realiz-oper, realiz-etale} that there are realization functors, compatible with the above functors. 

{\bf Betti.} Let $k$ be a field and $X$ a scheme of finite type over $k$. Let  $\sigma : k \hookrightarrow \CC$ be an embedding. Then there is symmetric monoidal unitary functor
\[
{\sf Betti}_{X, \sigma}  : \mathbf{DA}(X) \to \mathbf{D}(X^{\mr{an}})
\]
which commutes in the obvious sense with the above functors when restricted to 
{\it compact objects}, e.g., if $f: Y \to X$ is a morphism 
of finite type of quasi-projective $k$-schemes of finite type, then there are natural isomorphisms
\[
(f^{\mr{an}})^*\circ {\sf Betti}_{X, \sigma} \cong  {\sf Betti}_{Y, \sigma} \circ f^*.
\]
The triangulated subcategory of compact objects $\mathbf{DA}_{\mr{cp}}(X)$ is generated by 
the quasi-projective $Y \to X$. On that subcategory, we have an isomorphism
\[
{\sf Betti}_{X, \sigma} (\underline{{\sf Hom}}(A, B)) \cong 
 \underline{{\sf Hom}}({\sf Betti}_{X, \sigma}(A), {\sf Betti}_{X, \sigma}(B)).
\]

{\bf Hodge-deRham.}  For the precise statements, see \cite{Iv3}.

{\bf \'Etale.} 
See \cite{realiz-etale}. Let $E/\QQ$ be a finite extension field. For each prime number $\ell$ there is a functor
\[
{ \mathfrak R} ^{et} _{S, \ell}:  \mathbf{DA}^{et} _{ct}(S,E) \rightarrow  \mathbf{D}^{et} _{ct}(S,E \otimes \QQ _{\ell}) 
\]
from the category of constructible motives on $S$ with $E$-coefficients, to the derived category of 
constructible $E \otimes \QQ _{\ell}$-adic sheaves on $S$. The validity of this theorem depends on certain 
broad technical hypotheses on $S$, which are valid for all the schemes appearing in this paper.
This functor is compatible with the 6 operations above, as well as nearby and vanishing cycle sheaves. 
See also \cite{Iv1}, \cite{Iv2}.

{\bf Crystalline.} 
This is not yet available.

Currently under development, there is also a theory of perverse objects, see \cite{IvMor}.

\vskip .5 cm
In this paper, $\Lambda = \Lambda _N =  K_N = \QQ (\mu _N)$. If $G$ is a finite group acting on a motivic 
sheaf $M$ over any scheme in which $\# G$ is invertible, then for any idempotent $e$ in the group-ring $\Lambda [G]$ there is an image $eM$. If 
$\chi$ the character of a irreducible representation and $e = (1 / \# G)\sum _{g \in G}\chi ^{-1}(g).g$, then 
$eM $ is denoted $M^{\chi}$. 

This paper makes use of the Kummer motives $K(\chi)$ on $\mathbf{G}_m$, 
attached to a character $\chi :G \to \Lambda _N ^{\times}$. We consider the 
\'etale covering  $[N]: \mathbf{G}_m \to \mathbf{G}_m$,  where $\mathbf{G}_m$ is viewed as a scheme over 
$\mr{Spec}(R_N)$, with $R_N = \ZZ[\zeta _N, 1/N]$, i.e., $\mathbf{G}_m = \mr{Spec} R_N [t, t^{-1}].$ $G$ is the Galois group of this covering, which 
may be canonically identified with $\mu _N  = \mu _N(R_N)$ via Kummer theory: if $t$ is the  coordinate
on $ \mathbf{G}_m$, then for any $\sigma \in G$, $\sigma\  t^{1/N} = \zeta _{\sigma} \ t^{1/N}  $  for a 
root of unity $\zeta _{\sigma}$, independent of the choice of $t^{1/N}$.

The Kummer motive is defined by the formula
\[
K(\chi) = \left (  [N]_{\ast } \Lambda _{\mathbf{G}_m} (0) \right ) ^{\chi}
\]
in $ \mathbf{DA}(\mathbf{G}_m, \Lambda_N)$. We have 
\[
[N]_{\ast } \Lambda _{\mathbf{G}_m} (0) = \bigoplus _{\chi : G\to \Lambda_N ^{\times} } K(\chi).
\]
We have the 
realizations:

{\bf Betti.} For each embedding $\varphi : R_N \to \CC$ we get an isomorphism
$\varphi :G =  \mu_N (\Lambda_N) \cong \mu_N (\CC)$.
Then $K(\chi)_{\varphi, B}$ is the $\CC$-local system on the analytic space $\CC^{\times}$ defined by the character
\[
\varphi \circ \chi: \pi _1 (\CC^{\times}, 1) = \ZZ \to \CC^{\times}: k \mapsto \varphi (\chi (\varphi ^{-1} (\exp (2 \pi i k /N)))).
\]

{\bf Hodge-deRham.} $H^1 _{dR}( \mathbf{G}_m/R_N) $ is the $H^1$ of the complex 
$[d: \mc{O}_ { X}\to \Omega _{X /R_N} ^1]$, $X = \mathbf{G}_m $. This is a free $R_N$-module
generated by $\frac{dt}{t}$. Then we have the Gauss-Manin connection 
\[
\nabla: N_{\ast} \mc{O}_ {X} \to  N_{\ast} \mc{O}_ {X} \otimes \Omega _{Y/R_N} ^1, 
\]
where $N : X = \mathbf{G}_m  \to Y =  \mathbf{G}_m  $ is the map $s \mapsto s^N = t$. 
We have 
\[
N_{\ast} \mc{O}_ {X}  =  \bigoplus _{i \in \ZZ/N} \mc{O}_Y s^i =   
\bigoplus _{\chi \in\widehat{ \mu _N}}\left (N_{\ast} \mc{O}_ {X}  \right )^{\chi}.
\]
The identification $i \leftrightarrow\chi $ is given by Kummer theory. For a fixed character $\chi$, there
is a unique $i \in \ZZ/N$ such that $\sigma \ s^i = \chi (\sigma) s^i$, for all $\sigma \in G$.

Now for any $i$ we define a connection on the free rank 1 module  $\mc{O}_Y s^i $ by the formula
\[
\nabla: \mc{O}_Y s^i \to  \mc{O}_Y s^i \otimes \Omega _{Y/R_N} ^1, \nabla (f s^i) =
\left  (t \frac{df}{dt}+ \frac{i}{N}\right )\frac{dt}{t}\otimes s^i
\]
which follows from $d s^i = (i/N) s^i dt/t$.

{\bf \'Etale.} 
We have a canonical epimorphism $\pi _1 \to \mr {Gal}(\QQ (\zeta _N, s)/\QQ(\zeta _N, t))= G$
where  $\pi _1 = \pi _1 ( \mathbf{G}_m, \bar{\eta})$ is Grothendieck's fundamental group, 
$\bar{\eta} = \mr{Spec}(\overline{\QQ (\zeta _N, t)})$. Choose an embedding $\varphi : \mu _N\to \bar{\QQ}_{\ell}^{\times}$
 for a prime number $\ell$ prime to $N$.
Composing the above epimorphism with $\varphi \circ \chi $ we get a character of $\pi _1 $, which
defines the $\ell$-adic local system $K(\chi)_{\varphi, \ell}$.

In our application, we will need the Frobenius traces of the Kummer sheaves. If $t \in \mathbf{G}_m (\mathbf{F}_q)$
is a point $q \equiv 1 $ mod $N$, then
\[
\mr{Tr} (\mr{Frob} _t \mid  (K(\chi)_{\varphi, \ell})_{\bar{t}} ) = \varphi (\chi (t ^{(q-1)/N})).
\]
Note that $t \mapsto t ^{(q-1)/N}$ which sends $\mathbf{F}_q ^{\times} \to \mu _N$ is the character 
giving the canonical action of Frobenius on the Kummer extension: 
\[
\mr{Frob}_q (\sqrt[N]{t}) = t ^{(q-1)/N) }\sqrt[N]{t}.
\]



\bibliographystyle{plain}
\bibliography{DEtransfBib}


\end{document}